\def\llra{\hbox to 10mm{\rightarrowfill}}
\def\lllra{\hbox to 15mm{\rightarrowfill}}
\def\PB{{\widehat B}}
\def\PK{{\widehat K}}
\def\PT{{\widehat T}}
\def\PC{{\widehat C}}
\def\phi{{\varphi}}
\def\wf{{\widetilde f}}
\def\wY{{\widetilde Y}}
\def\cI{\mathcal{I}}
\def\cD{\mathcal{D}}
\def\cF{\mathcal{F}}
\def\cO{\mathcal{O}}
\def\cP{\mathcal{P}}
\def\cH{\mathcal{H}}
\def\cS{\mathcal{S}}
\def\cM{\mathcal{M}}
\def\cV{\mathcal{V}}
\let\tilde\widetilde
\DeclareMathOperator{\Pic}{Pic}
\newtheorem{lemm}{Lemma}[section]
\newtheorem{theo}[lemm]{Theorem}
\newtheorem{coro}[lemm]{Corollary}
\newtheorem{prop}[lemm]{Proposition}
\newtheorem*{conj*}{Conjecture}
\theoremstyle{definition}
\newtheorem{rema}[lemm]{Remark}
\theoremstyle{remark}
\newtheorem*{remark*}{Remark}
\newtheorem*{note*}{Note}
\begin{document}
\title[Cohomological support loci and Pluricanonical systems]{Cohomological support loci and Pluricanonical systems on irregular varieties}
\author{Zhi Jiang}
\address{Zhi Jiang, Shanghai Center for Mathematical Sciences, China}
\email{zhijiang@fudan.edu.cn}
\date{\today}
\subjclass[2010]{14E05, 14D07, 14F10}
\keywords{cohomological support loci, pluricanonical maps, generic vanishing}
\maketitle
\begin{abstract} 
For an irregular variety $X$ of general type, we show that if a general fiber $F$ of the Albanese morphism of $X$ satisfies certain Hodge theoretic condition, the $0$-th cohomological support loci of $K_X$ generates $\Pic^0(X)$. We then  show that the condition that the $0$-th cohomological support loci of $K_X$  generates $\Pic^0(X)$ can often be applied to prove the birationality of certain pluricanonical maps of $X$.
\end{abstract} 
 \section{Introduction}

In this article, we study the pluricanonical systems of irregular varieties.
 This topic was initiated in a series of article of Jungkai Chen and Hacon \cite{CH, CH2, CH3}.

Let $X$ be a smooth projective variety of general type with $q(X)=h^1(X, \mathcal O_X)>0$.
Fix the Albanese morphism $a_X: X\rightarrow A_X$ from $X$ to its Albanese variety $A_X$. We denote by $F$ a connected component of a general fiber of $a_X$. We say that $X$ is of Albanese fiber dimension $m$ if $\dim F=m$. The properties of the pluricanonical maps of $X$ are closely related to the properties of the pluricanonical systems of $F$. When $ F$ or $X$ is of low dimensions, the general picture is  well-understood.

\begin{theo}\label{A0}[J.A.Chen-Hacon \cite{CH}; Jiang-Lahoz-Tirabassi \cite{JLT}]
Let $X$ be a smooth projective variety of general type, of maximal Albanese dimension. Then the linear system $|mK_X+P|$ induces a birational map of $X$ for any integer $m\geq 3$ and $P\in\Pic^0(X)$.
\end{theo}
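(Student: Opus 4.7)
The plan is to follow the Chen--Hacon strategy, reducing birationality of $|mK_X+P|$ to a separation-of-points statement obtained by combining generic vanishing on $A_X$ with Nadel vanishing for appropriately constructed multiplier ideals.

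First, I would reduce to the following: for a general pair of distinct points $x_1,x_2\in X$, produce a section $s\in H^0(X,mK_X+P)$ vanishing at $x_1$ but not at $x_2$. By passing to a suitable birational model I may assume $a_X\colon X\to A_X$ is a morphism, and I record that under maximal Albanese dimension $a_X$ is generically finite onto its image. I then want to split $mK_X+P$ for $m\geq 3$ as
\[
mK_X+P \;=\; K_X + \bigl((m-1)K_X + P - L\bigr) + L,
\]
where $L=K_X+Q$ for a general $Q\in\Pic^0(X)$ with $h^0(X,L)>0$; such $Q$ exist because the $0$-th cohomological support locus $V^0(\omega_X)$ is nonempty, indeed by the Chen--Jiang--Hacon--Pareschi--Popa circle of results $V^0(\omega_X)$ generates $\Pic^0(X)$ in the maximal Albanese dimension, general type case.

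The core step is to construct, starting from an effective divisor in a well-chosen translate of $|(m-1)K_X|$ (or more precisely in $|(m-1)K_X + P - L|$ after twisting by a general torsion-like $P-L$), an effective $\Q$-divisor $D\sim_\Q (m-1)K_X+P-L$ whose multiplier ideal $\mathcal{J}(X,D)$ agrees with $\mathcal{I}_{x_1}$ in a neighborhood of $x_1$ and satisfies $x_2\notin\Supp(\cO_X/\mathcal{J}(X,D))$. The condition $m\geq 3$ is exactly what is needed so that the available ``room'' $(m-1)K_X$ is at least $2K_X$: this is what allows a general-type multiplicity argument à la Angehrn--Siu / Kawamata--Kollár to cut out such a $D$ after averaging over translates and using the continuous global generation of $a_{X*}\omega_X^{\otimes m-1}$ supplied by Pareschi--Popa's M-regularity criterion (combined again with the fact that $V^0$ generates $\Pic^0$). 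Nadel vanishing then gives
\[
H^1\bigl(X,\mathcal{J}(X,D)\otimes (K_X+L)\bigr)=0,
\]
so that a section of $(K_X+L)\otimes \cO_{x_2}$ lifts to a section of $mK_X+P$ vanishing at $x_1$ but not at $x_2$, as required.

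The main obstacle I expect is the construction of the multiplier ideal divisor $D$ isolating $x_1$ while avoiding $x_2$, \emph{uniformly for a general pair of points}. This is where the Fourier--Mukai/generic vanishing side meets the birational geometry side: one needs continuous global generation of $a_{X*}\omega_X^{\otimes m-1}$ at $(x_1,x_2)$, and the verification that M-regularity propagates from $a_{X*}\omega_X$ to its appropriate tensor/symmetric powers is the delicate point. Everything else (Nadel vanishing, lifting of sections, reduction to separation of general points) is relatively formal, so the heart of the argument is the interplay between the Chen--Jiang decomposition of $a_{X*}\omega_X$ and the Kawamata--Viehweg/Nadel package that produces the cutting divisor.
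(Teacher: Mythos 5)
The paper does not prove Theorem~\ref{A0}: it is quoted as input from \cite{CH} and \cite{JLT}, and the only place the paper touches the underlying mechanism is in the proof of Theorem~\ref{3K}, where part~(2) is dispatched in one line (``By the assumption that $V^0(f_*\omega_X)$ generates $\Pic^0(A)$ and Tirabassi's argument in \cite{T}, $f_*(\omega_X^2\otimes\cI_x)$ is M-regular for $x$ general; standard arguments conclude''). So I will compare your sketch to the argument actually used in \cite{JLT}.

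There is a concrete numerical slip in your Nadel step. You split $mK_X+P=K_X+\bigl((m-1)K_X+P-L\bigr)+L$ with $L=K_X+Q$, take $D\sim_\Q (m-1)K_X+P-L$, and assert that Nadel vanishing gives $H^1\bigl(X,\mathcal J(X,D)\otimes(K_X+L)\bigr)=0$ and that this lifts a local section to a section of $mK_X+P$. But the bundle whose $H^1$ you must kill is $\mathcal J(X,D)\otimes\cO_X(mK_X+P)$, i.e.\ $\mathcal J(D)\otimes\cO_X(K_X+D+L)$, and Nadel applies because $L=K_X+Q$ is big (after the usual reduction); as written, a vanishing for $K_X+L=2K_X+Q$ says nothing about sections of $mK_X+P$ when $m\geq 3$. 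This is fixable, but as stated the lifting step is not coherent.

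The more serious issue is the heuristic ``$(m-1)K_X\geq 2K_X$ gives enough room for an Angehrn--Siu / Kawamata multiplicity argument.'' For a general variety of general type $2K_X$ is far from enough to isolate a point via multiplier ideals; Angehrn--Siu-type bounds are on the order of $\tbinom{n+1}{2}$ times an ample class. What actually makes the maximal Albanese dimension case work is not a raw multiplicity count but continuous global generation coming from generic vanishing on $A_X$, and you correctly identify this as the ``delicate point'' but leave it unresolved. Concretely, the argument in \cite{JLT} bypasses the Angehrn--Siu construction entirely: one first knows (from \cite{CH3}) that $V^0(\omega_X)$ generates $\Pic^0(X)$; then Tirabassi's argument from \cite{T} shows that $a_{X*}(\omega_X^2\otimes\cI_x)$ is M-regular for general $x$, hence continuously globally generated; combining this with the M-regularity (in fact IT$^0$) of $a_{X*}\omega_X^{m-2}$ for $m\geq 3$ one separates general points directly, with no need to engineer a multiplier ideal that cuts out $x_1$ and misses $x_2$. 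Your proposal replaces this with a more classical Nadel-style construction whose key step you cannot carry out at the stated level of generality; the M-regularity route is not just ``where the two sides meet'' but is the entire engine, and it replaces the hard multiplier-ideal construction by a softer cohomological statement. So the sketch is in the right conceptual neighborhood, but the specific chain of implications you propose has a gap exactly at the step you flag as delicate, and the numerics of the Nadel step need to be corrected.
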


Note that this is an optimal result, which can be easily deduced from the properties of pluricanonical systems on curves of genus $2$. A similar result was proved by the author and Hao Sun when $F$ is a curve.

\begin{theo}\label{A1}[Jiang-Sun \cite{JS}]
Let $X$ be a smooth projective variety of general type, of Albanese fiber dimension $1$. Then the linear system $|mK_X+P|$ induces a birational map of $X$ for any integer $m\geq 4$ and $P\in\Pic^0(X)$.
\end{theo}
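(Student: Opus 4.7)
The plan is to prove birationality by showing, for $m \geq 4$ and any $P \in \Pic^0(X)$, that sections of $|mK_X + P|$ separate two general points of $X$ (tangent separation at a general point being analogous). Set $Y = a_X(X) \subset A_X$, so $\dim Y = \dim X - 1$ and $Y$ has maximal Albanese dimension. Since $X$ is of general type, a general fiber $F$ of $a_X : X \to Y$ is a smooth irreducible curve of general type, hence $g(F) \geq 2$. For general points $x_1, x_2 \in X$, I split into two cases depending on whether $a_X(x_1) = a_X(x_2)$.

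\textit{Same-fiber case.} Here $x_1, x_2 \in F$ for a general fiber $F$. Since $P|_F$ is trivial and $\omega_X|_F \cong \omega_F$, we have $(mK_X + P)|_F \cong mK_F$. For $m \geq 3$ and $g(F) \geq 2$, $\deg(mK_F) = 2m(g(F)-1) \geq 2g(F)+1$, so $|mK_F|$ is very ample on $F$ and separates $x_1, x_2$. The remaining step is a lifting statement: surjectivity of the restriction $H^0(X, mK_X+P) \to H^0(F, mK_F)$. I would approach this via the short exact sequence $0 \to \omega_X^{\otimes m} \otimes P \otimes \mathcal{I}_F \to \omega_X^{\otimes m} \otimes P \to \omega_F^{\otimes m} \to 0$ together with a Koll\'ar / Kawamata-Viehweg vanishing applied after pushing forward by $a_X$, using that $(m-1)K_X$ is sufficiently positive over $Y$ once $m \geq 3$.

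\textit{Different-fiber case.} Setting $y_i = a_X(x_i) \in Y$, one wants to separate them on $Y$. Consider the torsion-free sheaf $\mathcal{F}_m = a_{X*}(\omega_X^{\otimes m} \otimes P)$ on $Y$. Because $X$ is of general type and $Y$ has maximal Albanese dimension, generic vanishing results (Green-Lazarsfeld, Hacon, Chen-Hacon) combined with the Pareschi-Popa Fourier-Mukai machinery should show that $\mathcal{F}_m$ is continuously globally generated on $Y$ for $m \geq 2$. An argument in the spirit of Theorem \ref{A0} applied to $Y$ with coefficients in $\mathcal{F}_m$ then produces a section separating $y_1$ from $y_2$, which lifts to the desired section of $|mK_X+P|$.

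\textit{Bound and main obstacle.} The threshold $m \geq 4$ arises from the interplay between the $m \geq 3$ required for very ampleness of $|mK_F|$ on $F$ and the extra $+1$ absorbed by vanishing theorems in the lifting step. The main obstacle I anticipate is the surjectivity in the same-fiber case: $a_X$ may have singular or multiple fibers, so controlling $R^1 a_{X*}$ of the twisted ideal $\omega_X^{\otimes m} \otimes P \otimes \mathcal{I}_F$ requires either a careful resolution / semistable reduction argument or an appeal to Viehweg's weak positivity of $a_{X*}\omega_{X/Y}^{\otimes (m-1)}$ to ensure the needed bigness on $Y$ after twisting by a general $P \in \Pic^0(A_X)$.
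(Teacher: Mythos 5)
This statement is not proved in the paper; it is cited from Jiang--Sun \cite{JS}. What the paper does tell you is that the crux of the argument there is to establish that $V^0(K_X)$ \emph{generates} $\Pic^0(X)$ when $X$ has Albanese fiber dimension $1$, after which Tirabassi's argument \cite{T} yields M-regularity of $a_{X*}(\omega_X^2\otimes\cI_x)$ for $x$ general, and then the split $m=2+(m-2)$ with $m-2\ge 2$ together with IT$^0$ of $a_{X*}\omega_X^{m-2}$ closes the separation. Your proposal never engages with $V^0(K_X)$ at all, and this is a genuine gap, not a detail: it is exactly the hard content of \cite{JS}.

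More concretely, the step where you invoke ``continuous global generation of $\cF_m=a_{X*}(\omega_X^m\otimes P)$'' does not give what you need. CGG only says that for $Q$ ranging over a small open subset of $\Pic^0(A_X)$ the sections of $\cF_m\otimes Q$ generate the stalk; it does not give global generation (let alone point separation) for the \emph{fixed} twist $P$ in the statement. To untwist you must factor $m=m_1+m_2$, separate with $|m_1K_X+Q|$ for moving $Q$, and absorb $Q^{-1}$ into a section of $|m_2K_X+P-Q|$ via IT$^0$; moreover, to actually separate two points you need M-regularity not of $a_{X*}\omega_X^{m_1}$ but of the ideal twist $a_{X*}(\omega_X^{m_1}\otimes\cI_x)$, and that is where ``$V^0(K_X)$ generates $\Pic^0(X)$'' is indispensable. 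Your same-fiber lifting via the sequence $0\to\omega_X^m\otimes P\otimes\cI_F\to\omega_X^m\otimes P\to\omega_F^m\to 0$ and ``Koll\'ar/Kawamata--Viehweg vanishing after pushing forward'' also does not go through as stated: Koll\'ar vanishing controls $R^j a_{X*}\omega_X^m$ but not $R^j a_{X*}(\omega_X^m\otimes\cI_F)$, and $\cI_F$ has codimension $\dim X-1$, so the restriction map being surjective is precisely a global generation statement for $\cF_m$ at $a_X(F)$, which again needs the M-regularity input you have not supplied. In short, the outline is reasonable as a framework, but it omits the single nontrivial ingredient the paper singles out, and in its place appeals to positivity statements that only hold ``up to a twist.''
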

 It is worth noting that the structure of $$V^0(K_X):=\{P\in \Pic^0(X)\mid H^0(X, K_X\otimes P)\neq 0\}$$ plays an important role in the proof of both above theorems. We know from generic vanishing theory (see for instance \cite{GL} and \cite{S}) that $$V^j(K_X):=\{P\in \Pic^0(X)\mid H^j(X, K_X\otimes P)\neq 0\},$$for $j\geq 0$, is a union of torsion translates of abelian subvarieties of $\Pic^0(X)$. We say that \textit{ $V^j(K_X)$ generates $\Pic^0(X)$} if the irreducible components of $V^j(K_X)$ generate  $\Pic^0(X)$ as a group. One of the main points in the proof of Theorem \ref{A0} and \ref{A1} is to show that  $V^0(K_X)$ always  generates $\Pic^0(X)$ in both cases.
 
 The first main result of this paper is a general criteria for the property that $V^0(K_X)$ generates $\Pic^0(X)$.

 Recall that we say a smooth projective variety $V$ satisfies the infinitesimal Torelli condition if the natural cup product
 \begin{eqnarray*}
 H^1(V, T_V)\rightarrow \mathrm{Hom}(H^0(V, K_V), H^1(V, \Omega_V^{\dim V-1}))
 \end{eqnarray*}
 is injective. The infinitesimal Torelli condition is of course closely related with the variation of Hodge structures of $V$.

 Given a smooth projective variety of general type $V$. We know that the birational transformation group $\mathrm{Bir}(V)$ is a finite group and acts naturally on $H^0(V, K_V)$.
 \begin{theo}\label{main1}
 Assume that $X$ is of general type and $F$ satisfies the following conditions:
 \begin{itemize}
 \item[(C1)] the canonical model of $F$ is a smooth projective variety $V$ which satisfies the infinitesimal Torelli condition;
 \item[(C2)] $\mathrm{Bir}(F)$ acts faithfully on $H^0(F, K_F)$,
 \end{itemize}
 then $V^0(K_X)$ generates $\Pic^0(X)$.
 \end{theo}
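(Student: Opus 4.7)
The plan is to argue by contradiction: suppose $V^0(K_X)$ does not generate $\Pic^0(X)$, and derive a contradiction with $X$ being of general type, using (C1) and (C2).

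First, I would invoke the Chen--Jiang decomposition
\[
a_{X*}\omega_X \;=\; \bigoplus_{i} \pi_i^* \mathcal G_i \otimes \alpha_i,
\]
with $\pi_i:A_X\to B_i$ surjective, $\mathcal G_i$ M-regular on $B_i$, and $\alpha_i$ torsion. Then $V^0(K_X)=\bigcup_i(\alpha_i+\pi_i^*\widehat{B_i})$, and non-generation forces $\sum_i\pi_i^*\widehat{B_i}$ to be a proper abelian subvariety of $\widehat{A_X}$ containing each $\alpha_i$ up to torsion. After replacing $X$ by a suitable étale cover to absorb the torsion twists, I may assume $a_{X*}\omega_X=\pi^*\mathcal F$ for a non-trivial quotient $\pi:A_X\to Q$ with $K:=\ker\pi$ of positive dimension. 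Setting $g:=\pi\circ a_X$ and $X_q:=g^{-1}(q)$ for a general $q\in Q$, the restriction $a_{X,q}:=a_X|_{X_q}:X_q\to K$ has general fiber $F$, and base change gives
\[
(a_{X,q})_*\omega_{X_q}\;\cong\;(a_{X*}\omega_X)|_{\pi^{-1}(q)}\;\cong\;\mathcal O_K^{\oplus r},\qquad r=h^0(F,K_F).
\]

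The core step is to promote this triviality to a birational product $X_q\sim V\times K$, where $V$ is the canonical model of $F$. Running relative MMP over $K$ produces a family $\mathcal V\to K$ of canonical models, birational to $X_q$ over $K$; the sheaf $(a_{X,q})_*\omega_{X_q}$ is identified with the Hodge bundle of the VHS on $\mathcal V\to K$, and is trivialized by Step 1. By condition (C1), the cup-product
\[
H^1(V, T_V)\;\to\;\Hom\bigl(H^0(V, K_V),\,H^1(V, \Omega_V^{\dim V-1})\bigr)
\]
is injective; since the trivialization of the Hodge bundle kills the derivative of the period map in the top piece, the Kodaira--Spencer map of $\mathcal V\to K$ must vanish. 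Hence $\mathcal V\to K$ is analytically locally trivial, classified by a monodromy representation $\rho:\pi_1(K)\to \Aut(V)$ (a finite group, as $V$ is of general type). The trivialization of the Hodge bundle forces $\rho$ to act trivially on $H^0(V,K_V)$, and by (C2) this yields $\rho=1$. Therefore $\mathcal V\cong V\times K$ globally, and $X_q\sim V\times K$ birationally.

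For the contradiction: general fibers of a surjection from a general-type variety to an abelian base are of general type (a consequence of Kawamata's results on the Iitaka conjecture over abelian bases), so $X_q$ must be of general type. But $V\times K$ has Kodaira dimension $\dim V < \dim V+\dim K=\dim X_q$ since $\dim K>0$, a contradiction. The main obstacle is the core step: (C1) provides the Hodge-theoretic mechanism for deducing vanishing of Kodaira--Spencer from the triviality of the Hodge bundle, while (C2) kills the monodromy to upgrade analytic local triviality into a global product $\mathcal V\cong V\times K$. A further technical subtlety is that one must work with the relative canonical model $\mathcal V\to K$ rather than $X_q$ itself, so that fibers are literally $V$ rather than merely birational to it.
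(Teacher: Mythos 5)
Your overall strategy mirrors the paper's: argue by contradiction, restrict to the fibers of $X\to B$ over the abelian quotient dual to the span of $V^0(K_X)$, use $(\mathrm{C1})$ to force isotriviality from triviality of the Hodge bundle, and then use $(\mathrm{C2})$ to kill the monodromy and contradict $X$ being of general type. The paper's proof (of the more general Theorem~\ref{hypersurface}) is organized around the Hodge-module decomposition of \cite{PPS} rather than directly around curvature of the Hodge metric, but the spirit of Steps~1 and~2 in your argument is the same.

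The serious gap is that you implicitly assume the restricted morphism $a_{X,q}:X_q\to K$ is a fibration with connected fibers equal to $F$. That is not automatic: the Albanese map need not have connected fibers, and even if it does, $X_q \to K$ can have a nontrivial Stein factorization $X_q\to N\to K$ with $h_2:N\to K$ of degree $\geq 2$. When that happens, the isotrivial family lives over $N$, not over $K$; the monodromy representation is a homomorphism from $\pi_1(N)$ (which need not be abelian), and after trivializing it on a Galois cover $M\to N$ with group $G$, what you obtain is $X_q\sim (M\times V)/G$, not $X_q\sim V\times K$. Since $N$ (and then $M$) can be of general type (this happens precisely when $\deg h_2\geq 2$ and $K$ is simple), $(M\times V)/G$ \emph{can} be of general type, and your final contradiction "\,$X_q\sim V\times K$ has Kodaira dimension $\dim V<\dim X_q$\," evaporates. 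This is exactly why the paper does \emph{not} contradict general type; instead it shows, via the $G$-equivariant decomposition of $\rho_*\omega_M$ and the representation-theoretic computation of $R^jf_{b*}\omega_Y=\bigoplus_{\chi,\chi'}(\mathcal V_{\chi}\otimes V_{\chi'})^G$, that $R^jf_{b*}\omega_Y$ acquires an M-regular direct summand, contradicting $\dim V^0=0$. Your argument handles only the special case where the Stein factorization is trivial.

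Two smaller points. First, the step "trivialization of the Hodge bundle kills the derivative of the period map in the top piece" is asserted without justification. It is plausible — triviality of $(a_{X,q})_*\omega$ forces $c_1(\det)=0$, semipositivity of the Hodge metric then forces zero curvature, and Griffiths' curvature formula identifies the curvature with the top-degree piece of the infinitesimal variation — but you should spell this out. The paper avoids curvature entirely: the Hodge-module decomposition shows that the whole sub-variation of Hodge structure containing $H^j(F,K_F)$ (not just the top Hodge piece) is trivial up to an isogeny of $K$, from which the vanishing of the relevant composition with the Kodaira--Spencer map is immediate. Second, after the étale cover "absorbing torsion," one must be careful that the Albanese variety and $V^0(K_X)$ transform compatibly; and one should also record that $K$ can be taken to be simple (as the paper does), which is used to control the Stein factorization.
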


This is a further extension of the corresponding results in \cite{CH3} and \cite{JS}. Indeed, it is well known that a non-hyperelliptic curve of genus $\geq 3$ satisfies both conditions in the assumption of Theorem \ref{main1}. One of the main ingredient of the proof of Theorem \ref{main1} is a decomposition theorem of  Hodge modules on abelian varieties proved in \cite{PPS}. Actually our proof works also for higher cohomology support loci $V^0(R^jf_*K_X)$ (see Theorem \ref{hypersurface}).

We also explore the relation between the structure of $V^0(K_X)$ and the properties of  the pluricanonical systems of $X$.

\begin{theo}\label{main2}Assume that $V^0(K_X)$ generates $\Pic^0(X)$ and $|K_F|$ induces a birational map of $X$. Then the linear system $|3K_X+P|$ induces a birational map of $X$ for $P\in\Pic^0(X)$ general.
\end{theo}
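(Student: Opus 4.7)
To prove $|3K_X+P|$ is birational for general $P\in\Pic^0(X)$, I would follow the Chen--Hacon paradigm: exhibit, for a general pair of points $x_1,x_2\in X$, a section $\sigma\in H^0(X,3K_X+P)$ separating them, and verify that the differential of $\phi_{|3K_X+P|}$ is injective at a general point. Writing $3K_X+P=(K_X+P_1)+(K_X+P_2)+(K_X+P_3)$ with each $P_i\in\Pic^0(X)$ chosen generically and $P_1+P_2+P_3=P$, I would construct $\sigma$ as a product $s_1s_2s_3$ with $s_i\in H^0(X,K_X+P_i)$.

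The two hypotheses provide complementary ingredients. From $V^0(K_X)$ generating $\Pic^0(X)$, the Pareschi--Popa theory of generic vanishing implies that $\cF:=(a_X)_*\omega_X$ is a GV-sheaf on $A_X$ which is \cgg. I would extract two consequences: (a) for general $P\in\Pic^0(A_X)$ and general $y\in a_X(X)$, the restriction $H^0(X,K_X+a_X^*P)\to H^0(F_y,K_{F_y})$ is surjective; and (b) twisted sections of $\cF$ separate general distinct points of $a_X(X)\subset A_X$, since $a_X(X)$ generates $A_X$ and $\cF$ is continuously globally generated. Combining (a) with the hypothesis that $|K_F|$ is birational, the sections of $K_X+P$ for general $P$ restrict to a linear subsystem of $|K_F|$ on a general fiber $F$ that still separates points and tangent vectors.

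The separation of $x_1,x_2$ then splits into two cases. If $a_X(x_1)\neq a_X(x_2)$, I would use (b) to pick $s_1$ separating their images in $A_X$, and complete with generic $s_2,s_3$ not vanishing at either point. If $a_X(x_1)=a_X(x_2)$, so that $x_1,x_2\in F$ for a general fiber $F$, I would use (a) to lift a section $t\in H^0(F,K_F)$ separating $x_1,x_2$ (which exists because $|K_F|$ is birational) to $s_2\in H^0(X,K_X+P_2)$, and complete with generic $s_1,s_3$. The product $s_1s_2s_3$ then lies in $H^0(X,3K_X+P)$ and separates $x_1$ from $x_2$. Injectivity of the differential at a general point is handled analogously, combining the injectivity of $da_X$ on normal-to-fiber directions with tangent separation inside $F$ coming from $|K_F|$ birational.

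The main technical obstacle is the horizontal step (b): continuous global generation of $\cF$ alone does not formally yield separation of two arbitrary points of $a_X(X)$. The standard remedy, following Pareschi--Popa, is to pass from continuous global generation to genuine global generation after twisting $\cF$ by a sufficiently positive object --- for instance an ample line bundle pulled back from a quotient of $A_X$ defined by the stabilizer of a component of $V^0(K_X)$, or a further copy of $\cF$ itself --- and only then separate two points of the globally generated target. Matching up the twists so that $s_1s_2s_3$ lands in $|3K_X+P|$ for the prescribed general $P$, while preserving the required vanishing and nonvanishing at $x_1,x_2$, is where the technical bulk of the argument is concentrated.
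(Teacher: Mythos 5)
Your proposal has a genuine gap at its starting point: the decomposition $3K_X+P=(K_X+P_1)+(K_X+P_2)+(K_X+P_3)$ is not usable, because the hypothesis that $V^0(K_X)$ generates $\Pic^0(X)$ does \emph{not} say $V^0(K_X)=\Pic^0(X)$; in general $V^0(K_X)$ is a finite union of torsion-translated abelian \emph{subvarieties} of $\Pic^0(X)$, typically proper. So $H^0(X,K_X+a_X^*P_i)=0$ for $P_i$ truly general, and if you constrain each $P_i$ to lie in $V^0(K_X)$ you can no longer arrange $P_1+P_2+P_3=P$ for general $P$. The paper sidesteps this by writing $3K_X=K_X+2K_X$ and using the theorem of Lombardi--Popa--Schnell that $f_*\omega_X^{\otimes 2}$ is IT$^0$, so $H^0(X,2K_X+f^*Q)\neq 0$ for \emph{every} $Q\in\Pic^0(A)$; the twist on the $2K_X$ factor can then soak up any discrepancy.

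A second gap is in your claim (a): continuous global generation of $\cF=f_*\omega_X$ does \emph{not} give surjectivity of the restriction $H^0(X,K_X+a_X^*P)\to H^0(F,K_F)$ for a single general $P$. What cgg gives is surjectivity of $\bigoplus_{Q\in U}H^0(X,K_X+f^*Q)\to H^0(F,K_F)$, a \emph{sum} over an open set of twists $Q$ (and only for $Q$ ranging over the M-regular components of $V^0$). You therefore cannot simply lift a single separating section $t\in|K_F|$ to some $s_2\in H^0(X,K_X+P_2)$. This is exactly the difficulty the paper's Step 1 is engineered to solve: it assumes toward a contradiction that no individual $|2K_X+f^*P-f^*Q|$ separates $x,y$; writes a separating section of $|K_F|$ as $\sum_{ijk}s_{ijk}|_F$ with $s_{ijk}\in H^0(X,K_X+f^*Q_{ijk})$ for finitely many twists $Q_{ijk}$; pairs each with $s'_{ijk}\in H^0(X,2K_X+f^*P-f^*Q_{ijk})$ chosen to take the \emph{same} nonzero value at $x$ and $y$ (possible precisely because the maps by $|2K_X+f^*P-f^*Q_{ijk}|$ were assumed not to separate them); and then $\sum s_{ijk}s'_{ijk}\in H^0(X,3K_X+f^*P)$ separates. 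Your proposal has no analogue of this compensation device. Finally, note that separating general pairs of points is not yet birationality: the paper's Step 4 shows that if the tricanonical map were merely generically finite it would be birationally an \'etale pull-back along the isogeny $p:A\to K$ determined by the Chen--Jiang decomposition, and rules this out by a direct count; your sketch does not address this final obstruction.
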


The conditions in Theorem \ref{main1} and Theorem \ref{main2} apply to varieties whose Albanese fibers $F$ has sufficiently positive canonical bundles.
It is well known that surfaces of general type does not satisfy the infinitesimal Torelli condition in general. Besides surfaces with $p_g=0$, there are surfaces with $p_g>0$, which does not satisfy the infinitesimal Torelli condition (see for instance \cite{To}). Hence Theorem \ref{main1} and Theorem \ref{main2} do not apply to varieties of Albanese fiber dimension $2$.  It is then  surprising to the author that by taking hyperplane sections, the structure of cohomological support loci of varieties of Albanese fiber dimension $1$ has implications on the pluricanonical systems of varieties of Albanese fiber dimension $2$.

\begin{theo}\label{fiber=2}Assume that $X$ is a smooth projective variety of general type and a connected component of a general fiber of the Albanese morphism of $X$ is a surface $F$ with $p_g(F)>0$ and $q(F)=0$. Then $|5K_X+P|$ induces a birational map of $X$ for all $P\in\Pic^0(X)$.
 \end{theo}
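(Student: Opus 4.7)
My plan is to reduce the problem to the Albanese fiber dimension one situation by a hyperplane section argument, combining it with Bombieri's classical birationality result for surfaces of general type. Concretely, to check that $|5K_X+P|$ induces a birational map, I will separate a pair of general points $x_1,x_2\in X$, distinguishing two cases depending on whether $a_X(x_1)=a_X(x_2)$ or not.

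Suppose first that $x_1$ and $x_2$ lie in the same general Albanese fiber $F$. Since $q(F)=0$, the line bundle $P|_F$ is trivial, so the restriction of $5K_X+P$ to $F$ is just $5K_F$. By Bombieri's theorem, $|5K_F|$ is birational on any (minimal) surface of general type, and hence separates $x_1$ and $x_2$. To lift such a separating section to $X$, I will argue that the evaluation map $H^0(X,5K_X+P)\to H^0(F,5K_F)$ is surjective for a general fiber $F$; this is a standard consequence of generic vanishing applied to the direct image $a_{X*}\mathcal O_X(5K_X+P)$ on $A_X$, together with the base change properties of pluricanonical sheaves (since $K_X|_F=K_F$ is big).

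In the remaining case $a_X(x_1)\neq a_X(x_2)$, I construct a smooth divisor $Y\subset X$ as a general member of $|K_X+Q|$ for a suitable torsion $Q\in\Pic^0(X)$ (or, if necessary, of $|2K_X+Q|$, in case $|K_X+Q|$ does not move sufficiently along the Albanese fibers). Because $p_g(F)>0$ and $q(F)=0$, the curve $C:=Y\cap F$ is a general element of $|K_F|$ (respectively $|2K_F|$), which for a generic choice is smooth, irreducible, non-hyperelliptic and of genus $\geq 3$ by standard surface theory. Thus $Y$ is a smooth projective variety of general type of Albanese fiber dimension $1$ whose general Albanese fiber satisfies conditions (C1) and (C2) of Theorem \ref{main1}; hence $V^0(K_Y)$ generates $\Pic^0(Y)$, and Theorem \ref{main2} implies that $|3K_Y+P'|$ is birational on $Y$ for general $P'\in\Pic^0(Y)$.

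To conclude, I will arrange $Y$ to contain $x_1$ but not $x_2$, then use the adjunction $K_Y=(K_X+Y)|_Y$ together with the restriction exact sequence for $5K_X+P$ to transfer birationality from $Y$ back to $X$. Writing $5K_X+P-Y=4K_X+(P-Q)$ (or $3K_X+(P-Q)$ in the bicanonical case), the relevant surjectivity of restriction follows from vanishing for $a_{X*}$ of this residual sheaf, again via generic vanishing on $A_X$. The main obstacles I foresee are twofold: first, making the hyperplane construction uniform across the small invariant cases (for instance $p_g(F)=1$ or $K_F^2=1$, where $|K_X+Q|$ may restrict to a rigid or non-reduced divisor on $F$ and one has to carefully twist by torsion or pass to a higher power while still recovering the bound $5$); and second, upgrading the argument so that it works for \emph{every} $P\in\Pic^0(X)$, not merely for general $P$ as in the statement of Theorem \ref{main2}, which will require a closer inspection of the cohomological loci $V^0(K_X)$ produced by the hyperplane-section reduction.
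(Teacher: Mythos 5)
Your proposal has genuine gaps, and its overall strategy diverges from the paper's in a way that does not close.

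The first gap is in your ``same fiber'' case. You assert that $H^0(X,5K_X+P)\to H^0(F,5K_F)$ is surjective as ``a standard consequence of generic vanishing.'' What the theory of Lombardi--Popa--Schnell and Pareschi--Popa actually gives is that $f_*\omega_X^5$ is $\mathrm{IT}^0$, hence $M$-regular, hence \emph{continuously} globally generated: one gets surjectivity only after summing over an open set of twists $Q\in\Pic^0(A)$, i.e.\ $\bigoplus_{Q\in U} H^0(X,5K_X+P+f^*Q)\to H^0(F,5K_F)$. Continuous global generation does not imply ordinary global generation, so your restriction map need not surject for a fixed $P$. To turn CGG into actual separation one must absorb the extra twists $Q$ by tensoring against sections of $|mK_X-f^*Q|$; this is precisely what Proposition~\ref{criteria1} does, and it works only when the ideal-sheaf twist $f_*(\omega_X^{m}\otimes\cI_x)$ is itself $M$-regular. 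The entire difficulty of Theorem~\ref{regular-surface}, as the paper makes explicit, is the case where \emph{both} $f_*(\omega_X^2\otimes\cI_x)$ and $f_*(\omega_X^3\otimes\cI_x)$ fail to be $M$-regular; your argument never engages with that case.

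The second gap is in your hyperplane-section reduction. You acknowledge it yourself: when $p_g(F)=1$ or the canonical curve of $F$ has genus $2$ (e.g.\ a $(1,2)$-surface), the general member of $|K_F|$ is rigid, non-reduced, or hyperelliptic, so the Albanese fiber of $Y$ does not satisfy conditions (C1)/(C2). The proposed fallback via $Y\in|2K_X+Q|$ does not recover the bound $5$: adjunction gives $K_Y=(3K_X+Q)|_Y$, so $5K_X|_Y$ is not expressible as $mK_Y+(\text{torsion})$ for any $m$, and the residual computation $5K_X-Y\sim 3K_X+(P-Q)$ is then unrelated to a pluricanonical class of $Y$. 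The paper does exploit the Albanese-fiber-dimension-one result of [JS], but in a quite different way: not on a global hyperplane section of $X$, but on the irreducible components of the \emph{moving fixed divisors} $D_P$ and $V_{P+Q_0}$ that arise precisely when $M$-regularity fails. That relocation is what makes the argument go through uniformly. The paper's proof then runs by contradiction: after establishing $\PB_2=\PB_3$, $V^0(K_X)\subset\PC_2$, and that $M_P|_F$, $S_P|_F$ are trivial, it shows $h^0(F,K_F)=1$ and, via the surjectivity of Proposition~\ref{surj1}, derives $p_3(F)\le p_2(F)$, which is impossible for a surface of general type. There is no direct analogue of this arithmetic contradiction in your outline.
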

\begin{rema}
In \cite{CCCJ}, the authors shows that $|5K_X+P|$ always induces a birational map for any irregular $3$-fold $X$, which is an optimal result. The essential difficulty in that result is to deal with irregular $3$fold which is of Albanese fiber dimension $2$. Hence Theorem \ref{fiber=2} can be regarded as a partial higher dimensional generalization of the main result in \cite{CCCJ}. The proofs of the two results are quite different. The authors applied repeatedly Reider type arguments in \cite{CCCJ} and this kind of result is out of reach in higher dimensions.
\end{rema}

\subsection*{Acknowledgements}
Parts of the work was finished during the author's visit to  National Center for Theoretical Sciences in Taipei in January 2019. The author thanks the warm hospitality of Jungkai and Jhengjie. The author is supported by the Natural Science Foundation of China  (No.~11871155 and  No.~11731004), by the National Key Research and Development Program of China (No.~2020YFA0713200), and by the Natural Science Foundation of Shanghai (No.~21ZR1404500).

\section{The structure of $V^0(K_X)$ }

\subsection{Hodge Modules}\label{HodgeModule}
The proof of the main result relies heavily on the language of Hodge modules. We recall some results about Hodge Modules on abelian varieties. All results can be found in \cite{PPS}, where Pareschi, Popa, and Schnell applied the mechinary of Hodge modules to prove the general Chen-Jiang decomposition for higher direct image of canonical bundles on abelian varieties generalizing the main result in \cite{CJ}. Indeed, Pareschi, Popa, and Schnell proved the decomposition of certain Hodge modules which is crucial for our purpose.

Let $X$ be a complex manifold. We denote $\mathbf{HM}_{\mathbb R}(X, w)$ be the category of real Hodge module on $X$ of weight $w$, which is a semi-simple $\mathbb R$-linear abelian category,endowed with a faithful functor to the category of real perverse sheaves.
An object $M$ of $\mathbf{HM}_{\mathbb R}(X, w)$ consists of a regular holonomic left $\mathcal D_X$-module $\mathcal M$, a good filtration $F_{\cdot}M$ by coherent $\mathcal O_X$-modules, a perverse sheaf $M_{\mathbb R}$ with coefficients in $\mathbb R$ and an isomorphism $M_{\mathbb R}\otimes_{\mathbb R}\mathbb C\simeq \mathrm{DR}(\mathcal M)$. The support of $M$ is defined to be the support of the perverse sheaf $ M_{\mathbb R}$. One can also define polarizable Hodge modules. One can check \cite{PPS} or \cite{Sai} for more details about this category.

\begin{itemize}
\item[(A)]  Every object $M\in \mathbf{HM}_{\mathbb R}(X,w)$ admits a locally finite decomposition by strict support:
$$M\simeq \bigoplus_{i=1}^nM_i,$$
where each $M_i\in \mathbf{HM}_{\mathbb R}(X,w)$ has strict support equal to an irreducible analytic subvariety $Z_i\subset X$.
\item[(B)]  The category of polarizable real Hodge modules of weight $w$ with strict support $Z\subset X$ is equivalent to the category of generically defined polarizable real variations of Hodge structure of weight $w-\dim Z$ on $Z$.
\item[(C)]  Denote by $\mathbb R_X[\dim X]\in \mathbf{HM}_{\mathbb R}(X,\dim X)$ the polarizable real Hodge module corresponding to the constant real variation of Hodge structure on $X$. For any morphism $f:X\rightarrow A$ from $X$ to an abelian variety and an integer $j\geq 0$, $\mathcal H^ f_*\mathbb R_X[\dim X]$ is a polarizable real Hodge module of weight $j+\dim X$ on $A$. Let $M=(\mathcal M, F_{\cdot}\mathcal M, M_{\mathbb R})$ be the direct summand of $\mathcal H^j f_*\mathbb R_X[\dim X]$ with strict support $f(X)$. Then the first non-trivial grading piece of $\mathcal M$ is $R^jf_*\omega_X$.
\item[(D)] Let $M$ be as in $(C)$ above. Then we associate a complex polarizable Hodge module $(M\oplus M, J_M)$ as in \cite{PPS}, whose underlying perverse sheaf is simply $M\otimes_{\mathbb R}\mathbb C$. Then by \cite[Theorem 7.1 and Corollary 7.3]{PPS}, we know that $$(M\oplus M, J_M)\simeq \bigoplus_{j=1}^nq_j^{-1}(N_j, J_j)\otimes_{\mathbb C}\mathbb C_{\rho_j},$$ where $q_j: A\rightarrow A_j$ is a surjective morphism with connected fibers between abelian varieties,  $\rho_j\in \mathrm{Char}(A)$ is a unitary character, and $(N_j,J_j)\in \mathbf{HM}_{\mathbb C}(T_j, \dim X-\dim q_j)$ is a simple polarizable complex Hodge module with $\chi(T_j,N_j,J_j)>0$.
\item[(E)] Under the assumption of $(D)$, then by \cite[Theorem D]{PPS}, for any $k\in \mathbb Z$, $$\mathrm{gr}_k^F\mathcal M\simeq \oplus_{j=1}^nq_j^*\mathcal F_j\otimes L_j,$$ where $\cF_j$ is an M-regular sheaf on $T_j$ and $L_j$ is a torsion line bundle on $T$.

\end{itemize}
 
 \subsection{The proof of Theorem \ref{main1}}
 We work in a  more general setting.
 
\begin{theo}\label{hypersurface} Assume that $X$ is a  smooth projective variety of general type. Let $f: X\rightarrow A$ be a morphism from $X$ to an abelian variety and let $F$ be a connected component of a general fiber of $f$. Fix a positive integer $j\geq 0$. Assume that $F$ satisfies the following conditions:
 \begin{itemize}
 \item[(C1')] the canonical model of $F$ is a smooth projective variety $V$ which satisfies the following infinitesimal Torelli condition: the map induced by cup-product $$H^1(V, T_V)\rightarrow \mathrm{Hom}(H^j(V, K_V), H^{j+1}(V, \Omega_V^{\dim V-1}))
 $$ is injective;
 \item[(C2')] $\mathrm{Bir}(F)$ acts faithfully on $H^j(F, K_F)$,
 \end{itemize}
 then $V^0(R^jf_*\omega_X)$ generates $\Pic^0(X)$.
\end{theo}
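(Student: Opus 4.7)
The plan is to argue by contradiction. Assume $V^0(R^jf_*\omega_X)$ does not generate $\Pic^0(A)$ (equivalently $\Pic^0(X)$ via $f^*$). By combining items (C), (D), and (E) applied to the strict-support summands of $\mathcal{H}^j f_*\mathbb{R}_X[\dim X]$, one obtains a Chen--Jiang type decomposition
\[
R^j f_*\omega_X \simeq \bigoplus_k q_k^*\cF_k \otimes L_k,
\]
with $q_k\colon A\to A_k$ surjective morphisms of abelian varieties, $\cF_k$ an $M$-regular sheaf on $A_k$, and $L_k$ a torsion line bundle on $A$. Since $M$-regularity together with $\chi(\cF_k)>0$ gives $V^0(\cF_k) = \Pic^0(A_k)$, the subgroup of $\Pic^0(A)$ generated by $V^0(R^j f_*\omega_X)$ contains $\sum_k q_k^*\Pic^0(A_k)$. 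Failure of generation forces $K := \bigl(\bigcap_k \ker q_k\bigr)^0$ to be a positive-dimensional abelian subvariety of $A$, which we will ultimately contradict.

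The first key step is to promote this to a translation-invariance statement on Hodge modules. Let $M$ be the strict-support summand of $\mathcal H^j f_*\mathbb R_X[\dim X]$ with support $f(X)$. For any $x\in K$, each factor $q_k^{-1}(N_k,J_k)$ is $t_x$-invariant because $K\subset \ker q_k$, and each rank-one unitary local system $\mathbb C_{\rho_k}$ is $t_x$-invariant because translations act trivially on $\pi_1(A)$. Thus $t_x^*(M\oplus M,J_M)\simeq (M\oplus M,J_M)$; combined with semisimplicity of polarizable Hodge modules and the strict-support decomposition, this yields $t_x^* M\simeq M$. Consequently $f(X)$ is $K$-invariant, and the polarizable variation of Hodge structure $\mathbb V$ on a Zariski-open $U\subset f(X)$ underlying $M$ is translation-invariant under $K$, hence descends along $U\to U/K$. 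In particular $\mathbb V$ is constant along every $K$-coset $a+K\subset f(X)$.

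The second key step converts constancy of $\mathbb V$ along $a+K$ into triviality of the family $f^{-1}(a+K)\to a+K$; this is where (C1') and (C2') enter. Constancy forces the period map for the Hodge piece $H^j(V,K_V)$ to be constant along $a+K$, so its differential vanishes. By (C1') the differential factors as the Kodaira--Spencer map composed with the injective cup product map, so the Kodaira--Spencer class in the $K$-direction vanishes, and the family of canonical models $V_{a+x}$ is isotrivial over $a+K$. The monodromy of the isotrivial family on $a+K$ factors as $\pi_1(a+K)\to \mathrm{Bir}(F)\to \mathrm{GL}(H^j(F,K_F))$, and constancy of $\mathbb V$ trivializes the composition; by (C2') the map $\mathrm{Bir}(F)\to \mathrm{GL}(H^j(F,K_F))$ is injective, so the representation already has trivial image in $\mathrm{Bir}(F)$. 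Hence $f^{-1}(a+K)$ is birational to $(a+K)\times F\simeq K\times F$.

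Finally, $X$ admits a fibration $X\to f(X)/K$ whose general fiber is birational to $K\times F$. Because $K$ is abelian, $\kappa(K\times F) = \kappa(K)+\kappa(F)=\dim F$, while $X$ is of general type. Iitaka's easy addition then gives
\[
\dim X = \kappa(X)\le \dim(f(X)/K)+\kappa(K\times F) = (\dim f(X)-\dim K)+\dim F = \dim X - \dim K,
\]
forcing $\dim K\le 0$, a contradiction. The main obstacle is the second step: translating the abstract translation invariance of the Hodge module into concrete geometric constancy of the variation of Hodge structure, then using infinitesimal Torelli (C1') to move from constancy of the single Hodge piece $H^j(V,K_V)$ to isotriviality of canonical models, and finally upgrading isotriviality to triviality via the faithful action (C2').
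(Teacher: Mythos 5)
Your proposal follows the paper for the first half (use the Pareschi--Popa--Schnell decomposition, reduce to a $K$-torsor, use (C1') to kill the Kodaira--Spencer class), but diverges sharply in the second half, and the divergence contains a genuine gap.

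\textbf{The gap.} You claim that translation-invariance $t_x^*M\simeq M$ for $x\in K$ implies the underlying variation of Hodge structure $\mathbb V$ ``descends along $U\to U/K$'' and is ``constant along every $K$-coset.'' This does not follow. Translation-invariance of a Hodge module (or local system) as an \emph{isomorphism class} does not produce a descent datum, and does not force constancy along cosets. The rank-one unitary local systems $\mathbb C_{\rho_k}$ in (D) are the standard counterexample: each is translation-invariant on $A$, yet the restriction to a coset $a+K$ is a nontrivial unitary local system whenever $\rho_k|_K\neq 1$, and it does not descend to $U/K$. What the decomposition actually gives you, after restriction to $a+K$, is that $\mathbb V|_{a+K}$ is a direct sum of \emph{constant} Hodge structures tensored with \emph{unitary characters of $\pi_1(a+K)$}. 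That is enough to kill the Gauss--Manin connection on the associated graded, hence the Kodaira--Spencer class, hence birational isotriviality via (C1'); this much is fine. But it is \emph{not} enough to conclude that the monodromy $\pi_1(a+K)\to\mathrm{Bir}(F)\to\mathrm{GL}(H^j(F,K_F))$ is trivial, which is what you need to claim $f^{-1}(a+K)$ is birational to $K\times F$. All you get is that this composition, restricted to the pieces feeding into $H^j(F,K_F)$, is the restriction of the $\rho_k$ to $\pi_1(a+K)$, which need not be trivial. Your use of (C2') only lets you conclude the monodromy has \emph{finite} image (once isotriviality with general-type fiber is established), i.e. $f^{-1}(a+K)$ is birational to a finite quotient $(K'\times F)/G$ for $K'$ an isogeny cover of $a+K$, not to a product.

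\textbf{The conclusion still stands, with a fix.} The easy-addition endgame can be repaired: $\kappa$ is non-increasing under generically finite maps, so $\kappa((K'\times F)/G)\le\kappa(K'\times F)=\dim F<\dim K+\dim F$, and the contradiction with $X$ being of general type persists. So the strategy works, but the statement ``trivial composition $\Rightarrow$ trivial image in $\mathrm{Bir}(F)$, hence product'' is incorrect as written. You also tacitly assume $f$ has connected fibers so that $f^{-1}(a+K)$ is the connected general fiber of $X\to f(X)/K$; the theorem only assumes $F$ is a connected component of the general fiber, so strictly speaking a Stein-factorization reduction is needed.

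\textbf{Comparison with the paper's proof.} The paper avoids the descent issue entirely by restricting to the general fiber $Y\to K$ of $A\to B$ (with $B$ the dual of the span of $V^0$), working only with $\mathbb V|_{K}$, and never asserting constancy on cosets of the global VHS. For the (C2') step the paper takes the Stein factorization $Y\to N\to K$ and a Galois cover $M\to N$ trivializing the isotrivial family, writes $Y\sim(M\times F)/G$, decomposes $\rho_*\omega_M$ into $G$-isotypic pieces, shows the non-trivial ones push forward to $M$-regular sheaves on $K$, and then uses the faithful $G$-action on $H^j(F,K_F)$ together with a character-theory computation to exhibit an $M$-regular direct summand of $R^jf_{b*}\omega_Y$, contradicting $\dim V^0=0$. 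This is more robust than your monodromy-triviality argument: it handles the non-fibration case (Stein factorization with $\deg h_2\ge2$), and it never needs the monodromy to be trivial---only finite. Your approach is shorter when it works, at the cost of the extra hypothesis that is actually false (triviality rather than finiteness of the monodromy), and of assuming $f$ is a fibration onto $A$.
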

 \begin{proof}
We argue by contradiction. Assume that the translates through the origin of the irreducible components of $V^0(R^jf_*\omega_X)$ generate a proper abelian subvariety $\PB$ of $\Pic^0(A)$. Consider the dual $B$ of $\PB$, we then have
\begin{eqnarray*}
\xymatrix{X\ar[d]_{f}\ar[dr]^g\\
A\ar[r]_q & B.}
\end{eqnarray*}

 Let $K$ be the fiber of of $q: A\rightarrow B$ over a general point $b\in B$ and $Y$ the corresponding fiber of $X\rightarrow B$. We then have a morphism $f_b: Y\rightarrow K$. Note that $R^jf_{*}\omega_X|_K\simeq R^jf_{b*}\omega_Y$.
We then consider the natural restriction morphism $\pi: \Pic^0(A)\rightarrow \Pic^0(K)$.
It is clear that $$\pi(V^0(R^jf_*\omega_X))\subset V^0(R^jf_{b*}\omega_Y),$$ since $H^0(A, R^jf_*\omega_X\otimes P)\neq 0$ implies that $H^0(K, R^jf_{b*}\otimes P|_K)\neq 0$ for   $\in\Pic^0(A)$. We claim that
$\pi(V^0(f_*\omega_X))=V^0(f_{b*}\omega_Y)$. Indeed, for  $Q\in V^0(R^jf_*\omega_Y)$. We choose $P\in\Pic^0(A)$ such that $P|_K=Q$. Then $$q_*(R^jf_{*}\omega_X\otimes P)$$ is non-trivial, since its fiber at $b$ is exactly $H^0(K, R^jf_{b*}\omega_Y\otimes Q)\neq 0$. On the other hand, it is known that $q_*(R^jf_{*}\omega_X\otimes P)$ is a GV sheaf by Hacon's criterion \cite{H} and Koll\'ar's vanishing theorem \cite{K}. Thus there exists $P'\in \Pic^0(B)$ such that $H^0(B, q_*(f_{*}\omega_X\otimes P)\otimes P')\neq 0$. Hence $$P\otimes P'\in V^0(R^jf_*\omega_X)$$ and $\pi(P\otimes P')=Q$.

By the construction of $\PB$, we then know that $\dim V^0(R^jf_{b*}\omega_Y)=0$ and hence $V^0(R^jf_{b*}\omega_Y)$ consists of finite many torsion points.  
Repeating the above argument, we can assume furthermore that $K$ is a simple abelian variety.
We now work with $f_b: Y\rightarrow K$. 

 Let $m=\dim Y$.  Since $\dim V^0(R^jf_{b*}\omega_Y)=0$, we conclude from the decomposition theorem \cite{PPS} that $R^jf_{b*}\omega_Y$ is a direct sum of torsion line bundles on $K$. In particular, $f_b$ is surjective.
 
 Let $M$ be the direct summand of $\mathcal H^jf_{b*}\mathbb R[m]$ with strict support $f_b(Y)$. Note that the first non-trivial grading piece of the D-module $\cM$ of $M$ is $R^jf_{b*}\omega_Y$.
 
 By Subsection \ref{HodgeModule} (D), we know that
 \begin{eqnarray*}(M\oplus M, J_M)&=& \mathcal H'\bigoplus \mathcal H''
 \\&=&\big(\bigoplus_jq^{-1}V_j\otimes \mathbb C_{\rho_j} \big)\bigoplus \mathcal H'',
  \end{eqnarray*}where $q: A\rightarrow \mathrm{Spec}\; \mathbb C$ is the constant morphism, $V_j$ are Hodge structures of weights $\dim F+j$, $\rho_j\in\mathrm{Char}(K)$  are unitary characters, and $\mathcal H''$ is a direct sum of simple complex polarizable Hodge modules with positive holomorphic Euler characteristic.
 The fact that $\dim V^0(R^jf_{b*}\omega_Y)=0$ implies that $R^jf_{b*}\omega_Y$ is indeed the first non-trivia grading piece of the underlying D-module of $\mathcal H'$. Indeed, since the holomorphic Euler characteristic of $\cH''$ is positive, any Hodge grading  piece of $\mathcal H''$ has positive holomorphic Euler characteristic (see \cite[Lemma 15.1]{PPS}).

  By Subsection \ref{HodgeModule} (B), $\mathcal H^jf_{b*}\mathbb R[m]$ corresponds to the variation of Hodge structure of $H^{j+\dim F}(Y_t)$, where $t\in K$ belongs to the smooth locus of $f_b$ and $Y_t$ is the fiber of $f_b$ over $t$. Then $\cH'$  is a polarizable variation of complex Hodge structure, whose fiber over a general point $t\in K$ is a sub-Hodge structure of $H^{j+\dim F}(Y_t, \mathbb C)$ containing $H^j(Y_t, K_{Y_t})$.  Note that $\cH'$ is trivial up to an \'etale cover of $K$.
  
Since $F$ is a connected component of $Y_t$, the composition of the Kodaira-Spencer map with the cup product of variation of Hodge structures:
  $$T_{K,t}\rightarrow H^1(F, T_F)\rightarrow \mathrm{Hom}(H^j(F, K_F), H^{j+1}(F, \Omega_F^{\dim F-1}))$$
  is zero.

  We now consider the relative canonical model of $f_b$: $$\wf_b: \wY\rightarrow K.$$ By the assumption, we know that a connected component of the fiber $\wf_b$ over $t$ is the the canonical  model $V$ of $F$. By assumption, $V$ is a smooth projective variety  which satisfies the infinitesimal Torelli condition (C1'). Note that $H^j(F, K_{F})=H^j(V, K_{V})$.
 and $H^{j+\dim V}(V, \mathbb C)\subset H^{j+\dim F}(F, \mathbb C)$ is a direct summand, we have the commutative diagram:
 \begin{eqnarray*}
 \xymatrix{T_{K, t}\ar@{=}[d]\ar[r]& H^1(F, T_F)\ar[d]\ar[r]& \mathrm{Hom}(H^j(F, K_{F}), H^{j+1}(F, \Omega_{F}^{\dim F-1}))\ar[d]\\
 T_{K, t}\ar[r]& H^1(V, T_V)\ar[r]& \mathrm{Hom}(H^j(V, K_{V}), H^{j+1}(V, \Omega_{V}^{\dim V-1})).}
 \end{eqnarray*}
 Thus the composition of maps $$T_{K, t}\rightarrow H^1(V, T_{V}) \rightarrow \mathrm{Hom}(H^j(V, K_{V}), H^{j+1}(V, \Omega_{V}^{\dim V-1}))$$ is also zero.

By assumption that $V$ satisfies the infinitesimal Torelli condition,  the Kodaira-Spencer map $T_{K, t}\rightarrow H^1(V, T_{V})$ induced by the family $\wf_b$ is thus zero. Hence $\wf_b$ is locally isotrivial around $t\in K$ and thus
the morphism $f_b: Y\rightarrow K$ is  birationally isotrivial.

In the following, we shall apply the condition (C2') to show that for a  birational isotrivial morphism $f_b: Y\rightarrow K$, $\dim V^0(R^jf_{b*}\omega_Y)=0$ is absurd.

We  assume without loss of generalities that $f_b: Y\rightarrow K$ is primitive, i.e. $f_b$ does not factor through \'etale covers of $K$. We consider the Stein factorization $$f_b: Y\xrightarrow{h_1}N\xrightarrow{h_2} K$$ of $f_b$, where after birational modifications we may assume that $N$ is smooth, $h_1$ is a fibration and $h_2$ is generically finite and surjective. 
Since $K$ is a simple abelian variety and $f_b$ is primitive, either $h_2$ is birational or $\deg h_2\geq 2$ and $N$ is of general type.

Note that $h_1$ is a birationally isotrivial fibration.  Since $F$ is of general type, its birational automorphism group is finite, we then take a Galois cover $\rho: M\rightarrow N$ such that the flat base change $M\times_N Y$ is birational to $M\times F$.  If $\rho$ factors through an abelian \'etale cover of $N$ induced by an isogeny $\tilde{K}\rightarrow K$, we replace $f_b: Y\rightarrow K$ by $\tilde{f_b}: \tilde{Y}:=Y\times_K\tilde{K}\rightarrow \tilde{K}$. Note that $V^0(\tilde{f_b}_*\omega_{\tilde{Y}})$ still consists of finitely many points.  Therefore, without loss of generalities, we assume that  $\rho: M\rightarrow N$ does not factor through  abelian \'etale covers of $N$ induced by  isogenies of $K$. 

 Let $G$ be the Galois group of the cover $\rho$. Taking an $G$-equivariant resolution of singularities, we may assume that $M$ is smooth.  We then consider  the canonical decomposition $$\rho_*\omega_M=\omega_N\bigoplus_{1_G\neq \chi\in \mathrm{Irr}(G)}\mathcal V_{\chi}$$ with respect to the $G$-action, where $\mathrm{Irr}(G)$ denotes the set of irreducible representations of $G$. 
 
 We claim that $h_{2*}\mathcal V_{\chi}$ is M-regular on $K$ for any $1_G\neq \chi\in \mathrm{Irr}(G)$. If not, by the Chen-Jiang decomposition theorem (see \cite{PPS}), $h_{2*}\mathcal V_{\chi}$ has a numerically trivial line bundle $P$ as a direct summand. Since $h^{\dim M}(M, h_{2*}\rho_*\omega_M)=h^{\dim M}(K, h_{2*}\omega_N)=1$, $P$ is a non-trivial torsion line bundle on $K$. Since $f_b$ is primitive, $h_2^*P$ is also a non-trivial torsion line bundle on $N$. But $h^{\dim M}(M, \omega_M\otimes \rho^*h_2^*P^{-1})\neq 0$, thus $\rho^*h_2^*P=\mathcal O_M$, which implies that $\rho$ factors through an abelian \'etale cover  of $N$ induced by an isogeny of $K$, which is a contradiction.

 We are now ready to deduce a contradiction. We observe that $Y$ is birational to the diagonal quotient $(M\times F)/G$, where $G$ acts on $F$ via an injective homomorphism $G\rightarrow \mathrm{Bir}(F)$.  

Since $G$ acts naturally on $H^j(F, K_F)$, we then consider the representation decomposition $$H^j(F, K_F)=\bigoplus_{\chi\in\mathrm{Irr}(G)}V_{\chi}.$$ By the condition (C2), $G\subset\mathrm{Bir}(F)$ acts faithfully on $H^j(F, K_F)$.
 Hence for some $\chi_0\neq 1_G$, $V_{\chi_0}$ is non-zero.

  From  the commutative diagram:
  \begin{eqnarray*}
  \xymatrix{
  M\times F\ar[d]\ar[r]& (M\times F)/G\sim Y\ar[d]^{h_1}\ar[dr]^{f_b}\\
  M\ar[r]^{\rho}& M/G=N\ar[r]^{h_2} & K,}
  \end{eqnarray*}
  we see that \begin{eqnarray*}&&R^jf_{b*}\omega_Y=h_{2*}R^jh_{1*}\omega_Y=h_{2*}R^jh_{1*}\omega_{(M\times F)/G}\\&&=h_{2*}(\rho_*\omega_M\otimes H^j(F, K_F))^{G}=\bigoplus_{\chi, \chi'\in \mathrm{Irr}(G)}(\mathcal V_{\chi}\otimes_{\mathbb C} V_{\chi'
 })^G.
\end{eqnarray*}

 It is known from the character theory of representation of finite groups that for $\chi_0\neq 1_G$, there exists $\chi_1\neq 1_G\neq \in\mathrm{Irr}(G)$ such that $(\mathcal V_{\chi_1}\otimes_{\mathbb C} V_{\chi_0 })^G$ is a non-trivial direct summand of $\mathcal V_{\chi_1}\otimes_{\mathbb C} V_{\chi_0
 }$. Since $\cV_{\chi_1}$ is M-regular, $(\mathcal V_{\chi_1}\otimes_{\mathbb C} V_{\chi_0 })^G$ is also M-regular on $K$.
  Thus $R^jf_{b*}\omega_Y$ has an M-regular sheaf as a direct summand. This contradicts the assumption that $\dim V^0(R^jf_{b*}\omega_Y)=0$.
 \end{proof}

 \begin{rema}
 Perhaps the most useful case of Theorem \ref{hypersurface} is when $j=0$, because the  conditions (C1) and (C2) have been extensively studied.

 All smooth complete intersections of general type in  projective spaces satisfy $\mathrm{(C1)}$ (see \cite{Pe} or \cite{U}). Moreover, there is a fairly general criterion for varieties of general type satisfying $\mathrm{(C1)}$ proved in \cite{Ki}: assume that $K_F=L^{\otimes m}$ for some $m\geq 1$, the base locus of $|L|$ is of codimension $\geq 2$, and $h^0(F, \Omega_F^{\dim F-1}\otimes L)\leq h^0(L)-2$, then $$H^1(F, T_F)\rightarrow \mathrm{Hom}(H^0(F, K_F), H^1(F, \Omega_F^{\dim F-1}))$$ is injective.

 For $\mathrm{(C2)}$, assuming that a finite subgroup $G\subset \mathrm{Bir}(F)$ acting trivially on $H^0(F, K_F)$, then the canonical map of $F$ factors through the quotient map by $G$. This is of course impossible if the canonical system $|K_F|$ defines a birational map of $F$. Moreover, if $|K_F|$ defines a generically finite map of $F$, then the condition that the canonical map of $F$ factors through the quotient map by $G$ implies that $p_g(F)=p_g(F/G)$ (see \cite[Th\'eor\`eme 3.1]{B}), which is very restrictive.

 In conclusion, we conclude that the assumptions (C1) and (C2) are satisfied by a large number of varieties of general type, including all smooth complete intersections of general type in projective spaces. 
 \end{rema}

The proof of Theorem \ref{hypersurface} provides more information.

\begin{coro}\label{simpleHodge}  Under the assumption of Theorem \ref{hypersurface}. Assume  the sub-Hodge structure of $H^{j+\dim F}(F, \mathbb C)$ containing $H^j(F, K_F)$ is simple and nonzero and $f$ is a fibration onto its image,   $R^jf_*\omega_X$ is M-regular.
\end{coro}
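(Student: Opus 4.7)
My plan is to argue by contradiction. I would assume $R^jf_*\omega_X$ is not M-regular and use the simplicity of $H$ to force the Chen--Jiang decomposition of $R^jf_*\omega_X$ to collapse to a single summand, then contradict the conclusion of Theorem \ref{hypersurface}. By Subsection \ref{HodgeModule}(E), I would write
$$R^jf_*\omega_X \simeq \bigoplus_i q_i^*\cF_i \otimes L_i,$$
with each $q_i : A \to A_i$ a surjection of abelian varieties with connected fibers, each $\cF_i$ M-regular on $A_i$, and each $L_i$ a torsion line bundle on $A$; failure of M-regularity then means that some $q_i$ has $\dim A_i < \dim A$.

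The main step, which I expect to be the hardest, is to show that under our simplicity hypothesis only one index $i_0$ actually contributes. Decomposition (D) of the complex Hodge module $(M\oplus M, J_M)$ is into simple sub-Hodge-modules on $A$, and at a general point $t\in A$ it produces a compatible direct sum decomposition of complex Hodge structures
$$H^{j+\dim F}(F, \mathbb{C}) = \bigoplus_i (N_i)_{q_i(t)}.$$
The delicate point is to confirm that each summand $q_i^{-1}(N_i)\otimes \mathbb{C}_{\rho_i}$ really restricts at $t$ to a monodromy-stable sub-Hodge-structure of $H^{j+\dim F}(F,\mathbb{C})$; this uses that $(M\oplus M, J_M)$ has underlying perverse sheaf $M\otimes_{\mathbb{R}}\mathbb{C}$ with stalk $H^{j+\dim F}(F,\mathbb{C})$ (no doubling). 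Once this is in place, the simplicity dichotomy for $H$ is immediate: $H$ intersects each summand in zero or in all of $H$, so $H\subset (N_{i_0})$ for a unique $i_0$. Then $H^j(F,K_F)\subset H \subset (N_{i_0})$, the top Hodge piece of any other $(N_i)$ must vanish, and therefore $R^jf_*\omega_X \simeq q_{i_0}^*\cF_{i_0}\otimes L_{i_0}$.

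To close the argument I would invoke Theorem \ref{hypersurface}, whose hypotheses (C1') and (C2') are assumed in the Corollary: it yields that $V^0(R^jf_*\omega_X)$ generates $\Pic^0(A)$. On the other hand, by the projection formula $V^0(q_{i_0}^*\cF_{i_0}\otimes L_{i_0})$ is contained in the torsion translate $q_{i_0}^*\Pic^0(A_{i_0})\otimes L_{i_0}^{-1}$ of the proper abelian subvariety $q_{i_0}^*\Pic^0(A_{i_0})\subset \Pic^0(A)$ whenever $q_{i_0}$ fails to be an isogeny, so the algebraic subgroup it generates has positive codimension in $\Pic^0(A)$. This contradicts Theorem \ref{hypersurface} and forces $q_{i_0}$ to be an isogeny; thus $R^jf_*\omega_X\simeq q_{i_0}^*\cF_{i_0}\otimes L_{i_0}$ is M-regular on $A$.
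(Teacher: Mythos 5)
Your proof is correct and takes essentially the same approach as the paper: both combine Theorem \ref{hypersurface} (so that $V^0(R^jf_*\omega_X)$ generates $\Pic^0(A)$), the link between the Chen--Jiang decomposition and the Hodge-module decomposition from Subsection \ref{HodgeModule}, and the simplicity hypothesis to control the number of summands. You phrase the contradiction against Theorem \ref{hypersurface} (simplicity forces a single summand $q_{i_0}^*\cF_{i_0}\otimes L_{i_0}$, which cannot generate $\Pic^0(A)$ when $q_{i_0}$ is not an isogeny), whereas the paper phrases it against simplicity (non-M-regularity together with Theorem \ref{hypersurface} force at least two summands, which split the sub-Hodge structure); these are contrapositive organizations of the same argument.
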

\begin{proof}We have seen by Theorem \ref{hypersurface} that $V^0(R^jf_*\omega_X)$ generates $\Pic^0(A)$. If $R^jf_*\omega_X$ is not M-regular, the Chen-Jiang decomposition for  $R^jf_*\omega_X$ has at least $2$ direct summands.
By Subsection \ref{HodgeModule}, we know that the direct summands of $R^jf_*\omega_X$ comes from the first grading pieces of the decompositions of the Hodge module $M$.  Restricting to a general fiber of $f$, this implies that the sub-Hodge structure of $H^{j+\dim F}(F, \mathbb C)$ containing $H^j(F, K_F)$ is not simple, which is a contradiction.
\end{proof}

 \section{Tricanonical maps of $X$ with $K_F$ sufficiently positive}
 \begin{theo}\label{3K}Let $f:X\rightarrow A$ be a morphism from a projective variety to an abelian variety. Let $F$ be an irreducible component of a general fiber of $f$. Assume that $V^0(f_*\omega_X)$ generates $\Pic^0(A)$ and $|K_F|$ induces a birational map of $F$, then
 \begin{itemize}

 \item[(1)] the linear system $|3K_X+f^*P|$ induces a birational map of $X$ for $P\in\Pic^0(A)$ general;
 \item[(2)] the linear system $|mK_X+f^*P|$ induces a birational map for $m\geq 4$ and all $P\in\Pic^0(A)$.
 \end{itemize}
 \end{theo}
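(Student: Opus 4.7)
The strategy follows the Chen--Hacon framework, combining the positivity on $A$ supplied by the assumption that $V^0(f_*\omega_X)$ generates $\Pic^0(A)$ with the positivity on the fiber $F$ supplied by the birationality of $|K_F|$. To show that $|mK_X+f^*P|$ induces a birational map, I pick two general points $x_1,x_2\in X$, set $a_i=f(x_i)\in A$, and separate them; the argument splits into the horizontal case $a_1\neq a_2$ (separation on the base) and the vertical case $a_1=a_2$ (separation along a common fiber).

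The key input is a continuous global generation (CGG) statement. Because $f_*\omega_X$ is a GV-sheaf by Hacon's criterion, the assumption that $V^0(f_*\omega_X)$ generates $\Pic^0(A)$ combined with the Chen--Jiang decomposition from \cite{PPS} yields: for sufficiently general $P_\alpha\in V^0(f_*\omega_X)$, the sections in $H^0(X,K_X+f^*P_\alpha)$ generate the fibers of $f_*\omega_X\otimes P_\alpha$ at any preassigned pair of general points of $A$. In the horizontal case $a_1\neq a_2$, I would use CGG to produce $D_i\in|K_X+f^*P_i|$ with $x_1\in D_1\setminus D_2$ and $x_2\in D_2\setminus D_1$ for $P_1,P_2\in V^0$ in general position; adding any nonzero section of $|K_X+f^*P_3|$ with $P_3:=P-P_1-P_2$ yields a divisor in $|3K_X+f^*P|$ separating $x_1$ from $x_2$. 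In the vertical case $a_1=a_2=a$, both points lie on a single smooth fiber $F=f^{-1}(a)$ with $K_X|_F\simeq K_F$ and $f^*P|_F\simeq\mathcal O_F$; I would restrict to $F$, use the birationality of $|K_F|$ (hence of $|3K_F|$) to produce separating sections on $F$, and lift them to $X$ via the restriction exact sequence combined with Koll\'ar's vanishing applied to $3K_X+f^*P-F$.

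The genericity of $P$ in part~(1) is precisely what is needed to ensure $H^0(X,K_X+f^*P_3)\neq 0$ for $P_3=P-P_1-P_2$, which fails only on a proper closed subset of $\Pic^0(A)$ once $P_1,P_2$ are fixed. For part~(2), $m\geq 4$, I would absorb an arbitrary $P$ by writing $mK_X+f^*P = 3K_X+f^*P' + (m-3)K_X+f^*(P-P')$ with $P'$ generic, exploiting the bigness of $(m-3)K_X$ to supply an effective divisor in the last summand regardless of the twist $P-P'$. The main technical obstacle will be the vertical case: the vanishing $H^1(X,3K_X+f^*P-F)=0$ needed to lift sections from $F$ to $X$ requires recognizing $2K_X-F+f^*P$ as nef and big up to effective corrections, and ensuring that the CGG-produced horizontal sections are compatible with the lifted fiber-wise sections is where the delicate bookkeeping of the argument concentrates. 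Separation of tangent directions at a single general point proceeds along identical lines.
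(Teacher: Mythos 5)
Your high-level plan (split into a base-direction case and a fiber-direction case, using continuous global generation of $f_*\omega_X$ and the birationality of $|K_F|$) matches the flavor of the paper's Step~1 and Step~3, but the proposal has genuine gaps, including one missing structural step, and two of the arguments as written are incorrect.

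First, the horizontal separation as written does not separate. If $D_1\in|K_X+f^*P_1|$ passes through $x_1$ and $D_2\in|K_X+f^*P_2|$ passes through $x_2$, then $D_1+D_2+D_3$ contains \emph{both} $x_1$ and $x_2$, so this divisor gives no information. To separate $x_1$ from $x_2$ you must produce a divisor in $|3K_X+f^*P|$ through $x_1$ and missing $x_2$. This requires a finer mechanism: the paper uses the Chen--Jiang decomposition $f_*\omega_X=\bigoplus p_{B_i}^*\mathcal F_{B_i,j}\otimes P_{B_i,j}^{-1}$ and works with the IT$^0$ sheaves $g_{i*}(\omega_X^2\otimes f^*Q\otimes\cI_x)$ on the quotients $B_i$, so that twisting and evaluating at $x$ already separates from $y$ whenever $g_i(x)\neq g_i(y)$ for some $i$. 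CGG alone gives generation after summing over many twists, not separation by a single $P_\alpha$.

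Second, the vertical lifting via ``Koll\'ar's vanishing applied to $3K_X+f^*P-F$'' is not sheaf-theoretically valid. A general fiber of $f$ has codimension $\dim f(X)$ in $X$, so $F$ is not a divisor (unless $\dim A=1$), and $3K_X+f^*P-F$ is not a line bundle. The correct way to lift sections from $F$ is exactly what the paper does: push forward to $A$, use the M-regularity/IT$^0$ of $f_*(\omega_X^2\otimes\cI_x)$ and the CGG surjection $\bigoplus_Q H^0(X,K_X+f^*Q)\otimes Q^{-1}\to f_*\omega_X$, and then multiply the resulting restricted sections against suitable sections of $2K_X+f^*(P-Q)$ taking the same nonzero value at $x$ and $y$. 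Your lifting step should be reformulated along these lines.

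Third, and most importantly, your plan leaves an unaddressed residual case. The assumption that $V^0(f_*\omega_X)$ generates $\Pic^0(A)$ only gives that the product map $p=\prod p_{B_i}:A\to K$ is an isogeny, not an isomorphism. The separation mechanisms above handle two general points that lie on the same fiber of $f$, or two general points with $p\circ f(x)\neq p\circ f(y)$; but when $f(x)\neq f(y)$ while $p\circ f(x)=p\circ f(y)$ (i.e.\ the two base points differ by an element of $\ker p$) neither of your two cases applies. The paper's Step~4 handles exactly this: it shows that if $|3K_X+f^*P|$ failed to be birational for $P$ general then the tricanonical map would be birationally equivalent to the base change of the isogeny $p:A\to K$, and derives a contradiction from the direct sum decomposition $H^0(X,3K_X+f^*P)=\bigoplus_{Q\in\ker p^*}H^0(Y,3K_Y+f_Y^*P'+f_Y^*Q)$, where each summand is nonzero. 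Without this concluding argument the proof of (1) is incomplete.

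Finally, for part~(2) your absorption $mK_X+f^*P=3K_X+f^*P'+(m-3)K_X+f^*(P-P')$ requires simultaneously that $P'$ lies in the open locus where $|3K_X+f^*P'|$ is birational \emph{and} that $H^0(X,(m-3)K_X+f^*(P-P'))\neq 0$; for $m=4$ the latter forces $P-P'\in V^0(f_*\omega_X)$, a proper closed subset, so the two conditions can in principle be disjoint. The paper sidesteps this entirely by invoking Tirabassi's argument to get M-regularity of $f_*(\omega_X^2\otimes\cI_x)$, which yields birationality of $|mK_X+f^*P|$ for $m\geq 4$ and \emph{all} $P$ at once. Your approach to (2) needs at least a justification that the two constraints on $P'$ can be satisfied simultaneously.
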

 \begin{proof}

 We first note that $(2)$ is easy. By the assumption that $V^0(f_*\omega_X)$ generates $\Pic^0(A)$ and Tirabassi's argument in \cite{T}, we know that $f_{*}(\omega_X^2\otimes \cI_x)$ is M-regular for a general point $x\in X$. It follows immediately from standard arguments that $|mK_X+f^*P|$ induces a birational map of $X$ for all $m\geq 4$ and $P\in \Pic^0(X)$.

 The proof of $(1)$ consists of several steps. Without loss of generalities, we will assume that $f$ is primitive and $f(X)$ generates $A$.

 First of all, by the decomposition theorem \cite{PPS}, we write
 \begin{eqnarray}\label{dec}f_*\omega_X=\bigoplus_{p_{B_i}: A\rightarrow B_i}\bigoplus_jp_{B_i}^*\cF_{B_i, j}\otimes P_{B_i, j}^{-1},\end{eqnarray}
where $p_{B_i}$ are surjective morphisms with connected fibers between abelian varieties, $\cF_{B_i, j}$ are M-regular sheaves on $B_i$, $P_{B_i, j}$ are torsion line bundles.  Then $$V^0(\omega_X)=\bigcup_{i, j}\big(P_{B_i, j}+\Pic^0(B_i)\big),$$ where $p_{B_i}$ and $j$ runs through the direct summands in the decomposition formula (\ref{dec}).

Let $p:=\prod_{B_i} p_{B_i}: A\rightarrow \prod_iB_i$. By the assumption that $V^0(f_*\omega_X)$ generates $\Pic^0(A)$, we know that $p$ is finite onto its image. Let $K$ be the image of $p$. Then $p: A\rightarrow K\hookrightarrow \prod_iB_i$ is an isogeny between $A$ and $K$.

Since $M$-regular sheaves are continuously globally generated, we conclude that the twisted evaluation map
\begin{eqnarray}\label{surjective}\bigoplus_{U_{ij}}\bigoplus_{Q\in  U_{ij}}H^0(A, f_*\omega_X\otimes Q)\otimes Q^{-1}\rightarrow f_*\omega_X
 \end{eqnarray}is surjective, where $U_{ij}\subset P_{B_i, j}+\Pic^0(B_i)$ are open subsets. \\

\noindent {\bf Step 1. $|3K_X+f^*P|$ for any  $P\in\Pic^0(A)$, separates two general points on the same general fiber of $f$.}\\

Let $x, y\in X$ be two general points in the same general fiber $X_a$ of $f$ for $a\in A$. Since $V^0(f_*\omega_X)$ contains positive dimensional components, $h^0(X, 2K_X)=h^0(X, 2K_X+f^*Q)\geq 2$ for any $Q\in \Pic^0(A)$.

Fix $P\in \Pic^0(X)$.

If for some $B_i$ and $P_{B_i, j}$ as in (\ref{dec}) and $Q\in P_{B_i, j}+\PB_i$ general, the linear system $$|2K_{X}+f^*P-f^*Q|$$ separates $x$ and $y$, then for $Q\in P_{B_i, j}+\PB_i$ general, there exists $D_1\in |2K_X+f^*P-f^*Q|$ such that $x\in D$ and $y\notin D$. Since $Q\in P_{B_i, j}+\PB_i$ general,  there exists $D_2\in |K_X+f^*Q|$ containing neither $x$ nor $y$, then $D_1+D_2\in |3K_X+f^*P|$ separating  $x$ and $y$.

 We then assume that for all $B_i$ and $P_{B_i, j}$ and $Q\in P_{B_i, j}+\Pic^0(B_i)$, the corresponding linear system $$|2K_{X}+f^*P-f^*Q|$$ cannot separate $x$ and $y$. Since $|K_F|$ induces a birational map of $F$, we conclude from (\ref{surjective}) that there exist positive integers $N_{ij}$, $Q_{ijk}\in P_{B_i, j}+\Pic^0(B_i)$ general for $1\leq k\leq N_{ij}$, sections $s_{i j k}\in H^0(X, K_X+ f^*Q_{ijk})$  such that the section $$s:=\sum_{ij}\sum_{1\leq k\leq N_{ij}}s_{ijk}\mid_{X_a}\in H^0(X_a, K_{X_a})$$ separating $x$ and $y$, i.e. $x\in D_s$ and $y\in D_s$, where $D_s$ is the corresponding divisor of $s$ on $X_a$. We  choose $s'_{ijk}\in H^0(X, 2K_X+ f^*P-f^*Q_{ijk})$ which  take the same nonzero value at both $x$ and $y$ in an affine charts of $X_a$ containing both $x$ and $y$. Note that $s_{ilk}'$ exists since $\varphi_{ijk}(x)=\varphi_{ijk}(y)$, where $\varphi_{ijk}$ is the rational map induced by $|2K_X+f^*P-f^*Q_{ijk}|$.

 Then $$\sum_{ij}\sum_{1\leq k\leq N_{ij}}s_{ijk}\cdot s'_{ijk}\in H^0(X, 3K_X+ f^*P)$$ separating  $x$ and $y$. Hence the twisted tricanonical map induced by $|3K_X+f^*P|$ separates two general points on the same  general fiber of $f$. \\

\noindent {\bf Step 2. $|3K_X+f^*P|$ induces a generically finite map from $X$ onto its image  for all $P\in \Pic^0(A)$.}\\

  Assume the contrary, there exists a curve $C$ through a general point of $X$ contracting by the map $|3K_X+f^*P|$. Then the rank of the restriction map $$H^0(X, 3K_X+f^*P)\rightarrow H^0(C, (3K_X+f^*P)|_C)$$ is $1$. By Step 1.,  $ f|_C: C\rightarrow A$ is generically finite onto its image. By the assumption, there exists $p_B$ appearing in the decomposition formula (\ref{dec}) such that $p_B\circ f|_C: C\rightarrow B$ is generically finite onto its image. Let $T=P+\Pic^0(B)$ be a subset of $V^0(f_*\omega_X)$. Then for $Q\in T$ general, the image $V_Q$ of the restriction map $$H^0(X, K_X+f^*Q)\rightarrow H^0(C, (K_X+f^*Q)|_C)$$ is not zero. We also denote by $V_Q'$ the image of the restriction map
 $$H^0(X, 2K_X+f^*P-f^*Q)\rightarrow H^0(C, (2K_X+f^*P-f^*Q)|_C).$$ We observe that the image of $\Pic^0(B)\rightarrow \Pic^0(C)$ is positive dimensional.

 The image of the restriction map $$H^0(X, 3K_X+f^*P)\rightarrow H^0(C, (3K_X+f^*P)|_C)$$ contains the map of natural multiplication map $$\bigcup_{Q\in T}V_Q\otimes V_{-Q}'\rightarrow H^0(C, (3K_X+f^*P)|_C). $$ Thus the rank of the restriction map $$H^0(X, 3K_X+f^*P)\rightarrow H^0(C, (3K_X+f^*P)|_C)$$ is bigger than $1$, which is a contradiction. \\

 \noindent {\bf Step 3. For $P\in \Pic^0(A)$ general, let $x, y\in X$ be two general points such that $p\circ f(x)\neq p\circ f(y)\in K$. Then the map $|3K_X+f^*P|$ separates $x$ and $y$.}\\

 For a general point $x\in X$,  for each component $P_{B_i, j}+\PB_i$ appearing in (\ref{dec}), $x\notin \mathrm{Bs}(|K_X+f^*Q|)$ for $Q\in P_{B_i, j}+\PB_i$ general. Then $x\notin \mathrm{Bs}(|2K_X+f^*Q|)$ for all $Q\in \PT_{i,j}:=2P_{B_i, j}+\PB_i$. Considering $g_i:=p_{B_i}\circ f: X\rightarrow B_i,$ we know that $g_{i*}(\omega_X^2\otimes f^*Q)$ is IT$^0$ for all $Q\in \Pic^0(A)$ (see \cite{LPS}). Hence from the short exact sequence $$0\rightarrow g_{i*}(\omega_X^2\otimes f^*Q\otimes\cI_x)\rightarrow g_{i*}(\omega_X^2\otimes f^*Q)\rightarrow \mathbb C_x\rightarrow 0,$$ we conclude that $g_{i*}(\omega_X^2\otimes f^*Q\otimes \cI_x)$ is again IT$^0$ on $B_i$, for all $Q\in\PT_{ij}$.
Note that there exists an open subset $(x, Q)\in\mathcal U_i\subset X\times\Pic^0(X)$ parametrizing the flat family of sheaves $g_{i*}(\omega_X^2\otimes f^*Q\otimes\cI_x)$ on $B_i$. Being IT$^0$ is an open condition, hence there exists a non empty open subset $\mathcal U_i'$ of $\mathcal U_i$ parametrizing IT$^0$ sheaves. Let $\mathcal V$ be the intersection of all $\mathcal U_i'$ for all $p_{B_i}$ appearing in the decomposition formula. Then $\mathcal V$ is again a non-empty open subset of $X\times \Pic^0(A)$. Then for a general $P\in \Pic^0(A)$, there exists an open subset $W$ of $X$ such that $g_{i*}(\omega_X^2\otimes f^*P\otimes\cI_x)$ is IT$^0$ for all $g_{i}$ and all $x\in W$.
Hence for $P\in\Pic^0(A)$ general, there exists an open subset $W'$ of $X$ such that $$g_{i*}(\omega_X^2\otimes f^*P\otimes f^*P_{B_i, j}^{-1}\otimes\cI_x)$$ is IT$^0$ for all $i, j$ and all $x\in W'$.

We now take $W''$ to be the open subset of $W'$ such that for any $x\in W''$, $x\notin \mathrm{Bs}(|K_X+f^*P|)$ for any $i, j$ and $P\in P_{B_i, j}+\PB_i$ general.
Then for $x, y\in W''$ and $p(x)\neq p(y)$, there exists $p_{B_i}: A\rightarrow B_i$ such that $g_i(x)\neq g_{i}(y)$. Since $g_{i*}(\omega_X^2\otimes f^* P\otimes f^*P_{B_i, j}^{-1}\otimes\cI_x)$ is IT$^0$, for $Q\in \Pic^0(B_i)$ general, there exists a divisor $D\in |2K_X+f^*P-f^*P_{B_i,j}+f^*Q|$ separating $x$ and $y$ and hence the linear system $|3K_X+f^*P|$  also separates $x$ and $y$.\\

\noindent {\bf Step 4. Conclusion.}\\

We now argue that for $P\in \Pic^0(A)$ general, $|3K_X+f^*P|$ induces a birational map of $X$. Assume the contrary, let $\varphi: X\dashrightarrow Y\subset \mathbb P^N$ be the map induced by $|3K_X+f^*P|$, where $Y$ is the image of $\varphi$. Note that there exists a map $f_Y: Y\dashrightarrow K$ such that we have the commutative diagram:
\begin{eqnarray*}
\xymatrix{
X\ar@{.>}[r]^{\varphi}\ar[d]^f & Y\ar@{.>}[d]^{f_Y}\\
A\ar[r]^{p} & K. }
\end{eqnarray*}
Indeed, for $x\in Y$ general, $\varphi^{-1}(y)$ consists of finitely many points and by Step 1 and Step 2, $p\circ f(\varphi^{-1}(y))$ is a single point of $K$. Hence we can define the above map $Y\dashrightarrow K$.

After birational modifications of $X$ and $Y$, we may assume that $Y$ is smooth, $\varphi$ is an morphism and $Y\rightarrow K$ is also a morphism. After replacing $K$ by a suitable \'etale cover of it, we may alao assume that $Y\rightarrow K$ is primitive. Since $\varphi$ maps a general fiber of $f$ birationally onto its image, we conclude that $\varphi: X\rightarrow Y$ is birationally equivalent to the \'etale morphism $Y\times_KA\rightarrow Y$ induced by base change of $p: A\rightarrow K$. But then $$\varphi_*\omega_X^3=\bigoplus_{Q\in \ker p^*}\omega_Y^3\otimes Q.$$ We then take $P'\in \Pic^0(K)$ such that $p^*P'=P$. Then $$H^0(X, 3K_X+f^*P)=\bigoplus_{Q\in \ker p^*}H^0(Y, 3K_Y+f_Y^*P'+f_Y^*Q).$$ Note that $H^0(Y, 3K_Y+f_Y^*P'+f_Y^*Q)\neq 0$ for all $Q$. Hence it is absurd that $\varphi$ is birationally equivalent to the base change of $p$.

 \end{proof}

 Combining Theorem \ref{main1} and Theorem \ref{3K}, we have
 \begin{coro} Let $f: X\rightarrow A$ be a morphism from a smooth projective variety to an abelian variety. Let $F$ be a connected component of a general fiber of $f$. If $F$ satisfies the birationally infinitesimal Torelli condition $\mathrm{(C1)}$ and $|K_F|$ induces a birational map of $F$, then
 \begin{itemize}

 \item[(1)] the linear system $|3K_X+f^*P|$ induces a birational map of $X$ for $P\in\Pic^0(A)$ general;
 \item[(2)] the linear system $|mK_X+f^*P|$ induces a birational map for $m\geq 4$ and all $P\in\Pic^0(A)$.
 \end{itemize}
 \end{coro}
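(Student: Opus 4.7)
The plan is to combine Theorem \ref{hypersurface} (with $j=0$) and Theorem \ref{3K}: it suffices to verify the two hypotheses of Theorem \ref{3K}, namely that $V^0(f_*\omega_X)$ generates $\Pic^0(A)$ and that $|K_F|$ induces a birational map of $F$. The latter is part of the corollary's assumption, so the entire content reduces to producing the former.

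My first step would be to derive condition (C2) of Theorem \ref{hypersurface} from the hypothesis that $|K_F|$ is birational. If some nontrivial $g\in\mathrm{Bir}(F)$ acted trivially on $H^0(F,K_F)$, then every canonical section would be $g$-invariant, so the canonical rational map of $F$ would factor through the quotient $F\dashrightarrow F/\langle g\rangle$. This contradicts the birationality of $|K_F|$, so $\mathrm{Bir}(F)$ acts faithfully on $H^0(F,K_F)$ and (C2) holds. This is precisely the observation recorded in the remark following Theorem \ref{hypersurface}.

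With (C1) as given and (C2) now verified, I would apply Theorem \ref{hypersurface} with $j=0$ directly to $f\colon X\to A$, noting that (C1') and (C2') coincide with (C1) and (C2) in this case. The conclusion is that $V^0(f_*\omega_X)$ generates $\Pic^0(A)$, which together with birationality of $|K_F|$ constitutes the full set of hypotheses of Theorem \ref{3K}. Conclusions (1) and (2) then follow at once from that theorem.

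Since the argument is essentially a syntactic assembly of two previously established results, no serious obstacle arises; the only step requiring a small idea is the implication $|K_F|\text{ birational}\Rightarrow(C2)$, which is a one-line observation about factoring the canonical map through a quotient. The only subtle point worth explicit note is the implicit requirement that $X$ be of general type (needed to apply Theorem \ref{hypersurface}); this is automatic in any setting where the conclusions of the corollary are not vacuous, and in fact can be deduced from the given hypotheses via Kawamata-type additivity of Kodaira dimension for morphisms to abelian varieties once $F$ is of general type.
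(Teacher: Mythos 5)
Your assembly is exactly the paper's intended one: the corollary is obtained by feeding Theorem~\ref{hypersurface} (with $j=0$) into Theorem~\ref{3K}, and the step you supply --- that birationality of $|K_F|$ forces $\mathrm{Bir}(F)$ to act faithfully on $H^0(F,K_F)$, since a kernel element would make the canonical map factor through a nontrivial quotient --- is precisely the observation recorded in the remark after Theorem~\ref{hypersurface}. One small correction to a side remark: it is not true that the general type hypothesis on $X$ follows from the stated hypotheses by Kawamata-type additivity. Additivity over an abelian base only gives $\kappa(X)\geq\kappa(F)=\dim F$, and the product $X=F\times A$ with $f$ the projection shows this bound is sharp and $X$ need not be of general type; the general type assumption on $X$ is genuinely an extra (tacit) hypothesis needed to invoke Theorem~\ref{hypersurface}, and you are right that without it the conclusions are vacuous anyway.
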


\section{$K_F$ is not so positive}
The goal of this section is to prove Theorem \ref{main2}, which partially generalize the main result of \cite{CCCJ}.

\begin{theo}\label{regular-surface}
Let $f: X\rightarrow A$ be the Albanese morphism of a smooth projective variety of general type $X$ to its Albanese variety $A$. Let $F$ be a connected component of a general fiber of $f$.
Assume that $F$ is a surface with $p_g(F)>0$ and $q(F)=0$, $|5K_X+P|$ induces a birational map of $X$ for all $P\in\Pic^0(A)$.
\end{theo}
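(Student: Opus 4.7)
The strategy is to reduce to a variety of Albanese fiber dimension $1$ by taking a sufficiently positive smooth hyperplane section of $X$, invoking Theorem \ref{main1} on that section, and then combining with Bombieri's theorem that $|5K_F|$ is birational on a minimal surface of general type.

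After replacing $X$ by a suitable minimal model, I would fix an integer $a\geq 4$ so that $|aK_F|$ is base-point-free on the surface $F$ (by Francia) and an ample line bundle $L$ on $A$ so that $|aK_X+f^*L|$ is base-point-free on $X$. Let $D\in |aK_X+f^*L|$ be a general smooth member. By Bertini, $D$ is smooth, the restriction $f|_D:D\to A$ is surjective with general fiber $C=D|_F\in |aK_F|$ a smooth curve on $F$, and by Lefschetz $\Pic^0(D)\simeq \Pic^0(X)$. Since $a+1\geq 5$, the line bundle $K_C=(a+1)K_F|_C$ is very ample on $C$ (Bombieri), so $C$ is non-hyperelliptic of genus $\geq 3$ and in particular satisfies the infinitesimal Torelli condition (C1) and the condition that $\mathrm{Aut}(C)$ acts faithfully on $H^0(C,K_C)$ (C2). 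Applying Theorem \ref{main1} to $D$, which has Albanese fiber dimension $1$ with fiber $C$, we obtain that $V^0(K_D)$ generates $\Pic^0(D)\simeq \Pic^0(X)$.

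Next, I would transfer this structural information back to $X$ via the adjunction exact sequence
\[0\to K_X+P\to K_X+D+P\to K_D+P|_D\to 0.\]
Taking cohomology and using that $V^1(K_X)=V^1(f_*\omega_X)$ is a proper subvariety of $\Pic^0(A)$ (Hacon's theorem on GV-sheaves), the generic components of $V^0(K_D)$ are contained in $V^0(K_X+D)=V^0((a+1)K_X+f^*L)$, so this latter set generates $\Pic^0(X)$. Combining this with the Chen-Jiang/PPS decomposition of $f_*\omega_X^m$ from \S\ref{HodgeModule} applied to the relevant powers, and absorbing the auxiliary twist by $f^*L$ through the tensor-product structure of the decomposition, I expect to deduce that an appropriate twist of $f_*\omega_X^2$ (or $f_*\omega_X^m$ for small $m\geq 2$) carries a cohomological support locus generating $\Pic^0(A)$, even though $V^0(f_*\omega_X)$ itself may fail to generate.

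Finally, I would conclude the birationality of $|5K_X+P|$ by adapting the argument for Theorem \ref{3K}(2). The proof splits into two cases: (a) to separate $x,y\in X$ with $f(x)\neq f(y)$, use the continuous global generation of $f_*(5K_X+P)$ obtained from the generation property above via Tirabassi's method; (b) to separate two general points $x,y$ in the same surface fiber $F$, use Bombieri's theorem that $|5K_F|$ is birational on $F$, combined with surjectivity of the restriction map $H^0(X,5K_X+P)\to H^0(F,5K_F)$, which follows from global generation of $f_*(5K_X+P)$ at a general point of $A$. The principal obstacle is the last part: extracting from the generation of $V^0((a+1)K_X+f^*L)$ precisely the right positivity property of $f_*(5K_X+P)$ so that both (a) and (b) work at the optimal exponent $5$, without losing efficiency through the twist by $L$ or through the passage from $K_D$ to $K_X+D$. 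Careful exponent bookkeeping, together with the Pareschi-Popa machinery of M-regular sheaves applied to the direct summands of the Chen-Jiang decomposition, is where the delicate work of the proof must lie.
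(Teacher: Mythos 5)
Your proposal rests on a transfer step that turns out to be vacuous, and this is a genuine gap. You obtain, via Theorem \ref{main1}/\ref{hypersurface} applied to a general $D\in|aK_X+f^*L|$, that $V^0(K_D)$ generates $\Pic^0(A)$, and you transfer this to the statement that $V^0(K_X+D)=V^0((a+1)K_X+f^*L)$ generates $\Pic^0(A)$. But since $a+1\geq 2$ and $p_{a+1}(F)>0$, the sheaf $f_*\omega_X^{a+1}$ is already IT$^0$ by Lombardi--Popa--Schnell, and after twisting by the ample line bundle $L$ one still has $H^0(X,(a+1)K_X+f^*L+f^*Q)\neq 0$ for every $Q\in\Pic^0(A)$ (for instance, by multiplying a nonzero section of $(a+1)K_X+f^*Q$ with a nonzero section of $L$). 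So $V^0((a+1)K_X+f^*L)=\Pic^0(A)$ holds trivially, entirely independently of any Torelli input. Consequently this step extracts no information about the M-regularity of $f_*(\omega_X^m\otimes\cI_x)$ for the small exponents $m=2,3$ that Proposition \ref{criteria1} actually needs, and the key engine of the argument never engages. Note also that $V^0(K_X)$ itself genuinely may fail to generate $\Pic^0(A)$ when the fiber is a surface (this is precisely why the infinitesimal Torelli hypothesis in Theorem \ref{main1} is not available here), so Tirabassi's argument cannot be invoked directly either.

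The paper's use of ``hyperplane sections'' is of a very different nature and this is where the real work lies: instead of an auxiliary ample divisor, one considers the \emph{moving fixed divisors} $D_P$, $V_P$ of $|2K_X+P|$, $|3K_X+P|$, which exist precisely because both $f_*(\omega_X^2\otimes\cI_x)$ and $f_*(\omega_X^3\otimes\cI_x)$ are assumed not M-regular. These divisors, and the components of the moving parts $M_P$, $S_P$, naturally meet the surface fiber $F$ in curves, so the Jiang--Sun theorem on Albanese fiber dimension one can be applied to them after restriction and normalization. The structural results on the associated Abel--Jacobi maps $\Phi_2$, $\Phi_3$ (Lemmas \ref{L-property}, \ref{V0}--\ref{q=0}), combined with $q(F)=0$, force $M_P|_F$ and $S_P|_F$ to be trivial and $V^0(K_X)\subset\PC_2$, which ultimately yields $h^0(F,K_F)=1$ and $p_3(F)\leq p_2(F)$---a contradiction. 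Your Bertini section $D\in|aK_X+f^*L|$ is decoupled from this fixed-component geometry, and there is no mechanism in the proposal to recover the M-regularity of $f_*(\omega_X^m\otimes\cI_x)$ at the exponents $m=2,3$ that govern the quinti-canonical system. (There are also secondary concerns: replacing $X$ by a minimal model introduces terminal singularities that complicate the Bertini and Lefschetz steps, and $aK_X+f^*L$ is generally not ample on $X$, so $\Pic^0(D)\simeq\Pic^0(X)$ is not automatic; but the central issue is the vacuity of the transferred statement.)
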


 \subsection{Set-up}

In this subsection we denote by $f: X\rightarrow A$ the Albanese morphism of $X$   and let $F$ be a connected component of a general fiber of $f$ over its image.  For a line bundle $L$ on $X$, we define $V^0(L):=\{Q\in\Pic^0(A)\mid h^0(X, L\otimes Q)\neq 0\}$. For $Q\in\Pic^0(A)$, we can  regard it as a line bundle on $X$ or on $A$, or just a point of $\Pic^0(A)$ when there is no confusion.

 Let $m_2\geq m_1\geq 2$ be integers such that $$ |m_1K_F|\neq \emptyset$$ and $|m_2K_F|$ induces a birational map of $F$. We explore some general criteria of the birationality of $|(m_1+m_2)K_X|$.
 \begin{rema}
 If $F$ is a  surface of general type, then we can take $m_1=2$ and $m_2=3$ (see \cite{BPV}), unless its minimal model $F_0$ satisfies either  $(K_{F_0}^2, p_g(F_0))=(1, 2)$ or $(2, 3)$. In the latter cases, the tricanonical map of $F$ is of degree $2$.
 
\end{rema}

We recall that for $m\geq 2$ and $h^0(F, mK_F)\neq 0$, by the Theroem of Lombardi-Popa-Schnell \cite{LPS}, $f_*\omega_X^m$ is IT$^0$.
 
 The starting point is the following lemma, which is essentially  \cite[Propositiom 3.2 and Proposition 3.3]{CCCJ}.

\begin{prop}\label{criteria1}
Let $x\in X$ be a general point and $\cI_x\subset \mathcal O_X$ the ideal sheaf of this point.
Assume that either $f_*(\omega_X^{m_1}\otimes \cI_x)$ or  $f_*(\omega_X^{m_2}\otimes \cI_x)$ is M-regular, then $|(m_1+m_2)K_X+P|$ induces a birational map of $X$ for all $P\in\Pic^0(A)$.
\end{prop}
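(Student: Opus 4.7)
The plan is to show that $|(m_1+m_2)K_X + f^*P|$ separates two general points $x, y \in X$, for any $P \in \Pic^0(A)$, by producing a section vanishing at $x$ but not at $y$. Without loss of generality I assume $f_*(\omega_X^{m_1} \otimes \cI_x)$ is M-regular; the other case is symmetric after swapping $m_1$ and $m_2$. The basic tools will be: (a) $f_*\omega_X^{m_i}$ is IT$^0$ for $i=1,2$ by Lombardi--Popa--Schnell (using $h^0(F, m_i K_F) \neq 0$), hence continuously globally generated (CGG) after generic twist by $\Pic^0(A)$; (b) by Pareschi--Popa, M-regularity of $f_*(\omega_X^{m_1} \otimes \cI_x)$ implies that this sheaf too is CGG; (c) for generic $Q \in \Pic^0(A)$ and a general fiber $X_a$, the restriction $H^0(X, \omega_X^{m_i} \otimes f^*Q) \to H^0(X_a, m_i K_{X_a})$ is surjective by cohomology and base change.

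I then split into two cases depending on whether $y$ lies in the Albanese fiber over $f(x)$. If $f(x) \neq f(y)$, the CGG property of $f_*(\omega_X^{m_1} \otimes \cI_x)$ produces, for $Q$ in a dense open subset of $\Pic^0(A)$, a section $\sigma \in H^0(X, \omega_X^{m_1} \otimes f^*Q \otimes \cI_x)$ whose image in the fiber of the pushforward at $f(y)$, identified via (c) with an element of $H^0(X_{f(y)}, m_1 K_{X_{f(y)}})$, is non-zero at $y$. Pairing $\sigma$ with some $\tau \in H^0(X, \omega_X^{m_2} \otimes f^*(P-Q))$ non-zero at $y$, which exists by (a) for generic $Q$, yields a product $\sigma \cdot \tau \in H^0(X, (m_1+m_2)K_X + f^*P)$ that vanishes at $x$ but not at $y$.

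If instead $f(x) = f(y)$, let $F$ be the component of this Albanese fiber containing both $x$ and $y$. The hypothesis that $|m_2 K_F|$ is birational produces $\bar\sigma \in H^0(F, m_2 K_F)$ with $\bar\sigma(x) = 0$ and $\bar\sigma(y) \neq 0$; lifting via (c) gives $\sigma \in H^0(X, \omega_X^{m_2} \otimes f^*Q)$ with the same vanishing behavior at $x, y$ for some generic $Q$. Pairing with $\tau \in H^0(X, \omega_X^{m_1} \otimes f^*(P-Q))$ non-zero at $y$, possible by (a), the product $\sigma \cdot \tau$ again separates $x$ and $y$. Note that this second case does not invoke the M-regularity hypothesis at all; that hypothesis is essential only in the first case, where it produces sections of $m_1 K_X + f^*Q$ vanishing at $x$ yet generically non-vanishing on a different Albanese fiber.

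The main technical care is in coordinating the choice of $Q \in \Pic^0(A)$ so that the openness conditions---CGG surjectivity onto the fiber at $f(y)$, base-change compatibility near $f(y)$, and $y$ avoiding the base loci of $|m_i K_X + f^*(P-Q)|$ for $i = 1,2$---all hold simultaneously; each is open and nonempty, so a common $Q$ exists. The conceptual heart is the application of Pareschi--Popa's CGG property for M-regular sheaves to convert the hypothesis into sections with prescribed vanishing at $x$ and non-vanishing on arbitrary general fibers of $f$; the rest is the standard multiplication of an $m_1$-canonical section with an $m_2$-canonical section.
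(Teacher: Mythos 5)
The proposal has a genuine gap in the case $f(x)=f(y)$, and it stems from item (c) of your toolbox. You assert that for generic $Q\in\Pic^0(A)$ the restriction $H^0(X,\omega_X^{m_i}\otimes f^*Q)\to H^0(X_a,m_iK_{X_a})$ is surjective "by cohomology and base change." Cohomology and base change identifies the fiber $(f_*\omega_X^{m_i})_a$ with $H^0(X_a,m_iK_{X_a})$ once $R^1f_*$ vanishes, but it says nothing about surjectivity of $H^0(A,f_*\omega_X^{m_i}\otimes Q)\to (f_*\omega_X^{m_i})_a\otimes k(a)$. That surjectivity is global generation of $f_*\omega_X^{m_i}\otimes Q$ at $a$. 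What IT$^0$ (or M-regularity) gives, via Pareschi--Popa, is only \emph{continuous} global generation: the sum of those evaluation maps over all $Q$ in a nonempty open subset of $\Pic^0(A)$ is surjective, while for a single $Q$ the image can be a proper subspace. So you cannot lift $\bar\sigma\in H^0(F,m_2K_F)$ to a section of a single twist $m_2K_X+Q$; the honest output of CGG is a decomposition $\bar\sigma=\sum_i f_i|_F$ with $f_i\in H^0(X,m_2K_X+P+P_i)$ for several $P_i$.

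Once the lift is a sum over several $P_i$, your pairing with a single $\tau$ no longer produces a section of $(m_1+m_2)K_X+P$, and you must instead pair each $f_i$ with some $g_i\in H^0(X,m_1K_X-P_i)$. For the sum $\sum_i f_ig_i$ to vanish at $x$ and not at $y$ one needs the $g_i$ to take proportional values at $x$ and $y$ with a proportionality constant independent of $i$; this is exactly the point where the paper splits further, using the M-regularity of $f_*(\omega_X^{m_1}\otimes\cI_x)$ when $|m_1K_F|$ separates $x$ and $y$, and using the "same nonzero value" normalization (possible precisely because $|m_1K_F|$ \emph{fails} to separate $x$ and $y$, so values of all $m_1K_F$-sections at $x$ and $y$ are proportional) otherwise. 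Your remark that the $f(x)=f(y)$ case "does not invoke the M-regularity hypothesis at all" should itself have been a red flag: if the proposition held without the M-regularity assumption, the entire analysis in the rest of Section 4 (the Abel--Jacobi argument for when both pushforwards fail to be M-regular) would be unnecessary. Finally, the two M-regularity alternatives are not symmetric and cannot be handled "WLOG by swapping $m_1$ and $m_2$": the hypotheses on $m_1$ and $m_2$ are different ($|m_1K_F|\neq\emptyset$ versus $|m_2K_F|$ birational), and the paper's two cases require slightly different combinations of which sheaf supplies CGG and which supplies the pairing section.
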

\begin{proof}
The argument is the same as in \cite[Propositiom 3.2 and Proposition 3.3]{CCCJ}. For reader's convenience we repeat the argument here.
We need to show that $|(m_1+m_2)K_X+P|$ separates two general points $x$ and $y$.

We first assume that
$f_{*}(\omega_X^{m_2}\otimes \cI_x)$ is M-regular and let $F$ be the component of $a_X^{-1}a_X(x)$ containing $x$. Then by \cite{PP}, M-regular sheaves are continuously globally generated, which implies
that for any open subset $U$ of $\Pic^0(A)$, the evaluation map $$\bigoplus_{Q\in U}H^0(X, \cI_x(m_2K_X)\otimes Q)\rightarrow H^0(F, \cI_x(m_2K_F))$$
is surjective. Since $f_{*}\omega_X^{m_1}$ is IT$^0$, $|m_1K_X+P-Q|$ is nonzero for any $Q\in \Pic^0(A_X)$. Hence $|(m_1+m_2)K_X+P|$ separates $x$ and $y$ if $y\notin F$. By the assumption that $|m_2K_F|$ induces a birational map of $F$, we conclude as well that $|(m_1+m_2)K_X+P|$ separates $x$ and $y$ if $y\in F$.

We then assume that $f_{*}(\omega_X^{m_1}\otimes \cI_x)$ is M-regular. The same argument shows that $|(m_1+m_2)K_X+P|$ separates $x$ and $y$ either when $y\notin F$ or when $y\in F$ and $x, y\in F$ can be separated by $|m_1K_F|$. If $x, y\in F$ cannot be separated by $|m_1K_F|$, then take a section $f\in H^0(F, m_2K_F)$ such that its corresponding divisor contains $x$ and does not contain $y$. Since $f_{*}\omega_X^{m_2}$ is IT$^0$, for $U\subset \Pic^0(A_X)$ open, there exists $P_i\in U$, $1\leq i\leq N$ and $f_i\in H^0(X, m_2K_X+P+P_i)$ such that $$\sum_i f_i\mid_F=f.$$ We can pick $g_i\in H^0(X, m_1K_X-P_i)$ which    take the same nonzero value at both $x$ and $y$ in a suitable affine open neighborhood of $x $ and $y$. Then the divisor of  $F:=\sum_if_ig_i\in H^0(X,(m_1+m_2)K_X+P)$  separates $x$ and $y$.
\end{proof}

\begin{coro}
Assume that $V^0(K_X)$ generates $\Pic^0(X)$. Then $|(m_2+2)K_X+P|$ induces a birational map of $X$ for all $P\in\Pic^0(A)$.
\end{coro}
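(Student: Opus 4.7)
\emph{Proof proposal.}

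The plan is to reduce directly to Proposition \ref{criteria1} with $m_1=2$. The hypothesis forces $V^0(K_X)\neq\emptyset$, and restricting a section of $K_X$ to a general Albanese fiber shows $|K_F|\neq\emptyset$, hence $|2K_F|\neq\emptyset$; in particular $m_1=2$ is an admissible choice compatible with the hypothesis $|m_2K_F|$ birational (for $m_2\geq 2$). It therefore suffices to establish that $f_*(\omega_X^2\otimes \cI_x)$ is M-regular for a general point $x\in X$, since Proposition \ref{criteria1} then immediately yields the birationality of $|(m_2+2)K_X+P|$ for every $P\in\Pic^0(A)$.

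For the M-regularity step I would invoke Tirabassi's argument from \cite{T}, already applied in the proof of Theorem \ref{3K}(2). Its two ingredients are: (i) the Chen-Jiang decomposition of $f_*\omega_X$ from \cite{PPS}, which combined with the assumption that $V^0(K_X)$ generates $\Pic^0(A)$ forces the natural morphism $p\colon A\to\prod_iB_i$ attached to the decomposition to be finite onto its image, so that $f_*\omega_X$ is continuously globally generated in the strong form needed; and (ii) the $\mathrm{IT}^0$ property of $f_*\omega_X^2$ from Lombardi--Popa--Schnell \cite{LPS}. Feeding these inputs into the long exact sequence obtained by applying $f_*(\omega_X^2\otimes -)$ to
\begin{equation*}
0\to\cI_x\to\cO_X\to\cO_x\to 0
\end{equation*}
yields the required codimension estimates on $V^i(f_*(\omega_X^2\otimes\cI_x))$ for every $i>0$, and hence the M-regularity.

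The only genuinely non-formal point is this M-regularity, which is also precisely where the hypothesis on $V^0(K_X)$ is used in an essential way. Since this step has already been packaged (for the same sheaf $f_*(\omega_X^2\otimes\cI_x)$) inside the proof of Theorem \ref{3K}(2), the corollary follows by a clean combination of that verification with Proposition \ref{criteria1}; no new ideas beyond those already deployed in the paper are needed.
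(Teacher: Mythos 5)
Your proposal is correct and follows essentially the same route as the paper: derive M-regularity of $f_*(\omega_X^2\otimes\cI_x)$ from the hypothesis that $V^0(K_X)$ generates $\Pic^0(X)$ via Tirabassi's argument \cite{T}, then conclude by Proposition \ref{criteria1} with $m_1=2$. The extra observation that $V^0(K_X)\neq\emptyset$ forces $|K_F|\neq\emptyset$ (hence $|2K_F|\neq\emptyset$, so $m_1=2$ is admissible) is a sound and clean justification, even though the paper treats it as implicit.
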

\begin{proof}
Since $V^0(K_X)$ generates $\Pic^0(X)$, we know that $f_{*}(\omega_X^2\otimes\cI_x)$ is M-regular by \cite{T}. We then conclude by Proposition \ref{criteria1}.
\end{proof}

\subsection{The Abel-Jacobi maps induced by fixed components of pluricanonical systems.}\label{AJ}

We assume in this subsection that for some $m\geq 2$ and $h^0(F, mK_F)>0$, $f_*(\omega_X^m\otimes\cI_x)$ is not M-regular for $x\in X$ general. Since \cite{BLNP}, we understand that the fixed component of $|mK_X+P|$ moves along with $P$ and the induced Abel-Jacobi map is quite interesting. We extend some related results of \cite{BLNP} and \cite{CCCJ} in this setting.
 
  Following the same argument in \cite[Section 4]{CCCJ}, we consider the closed subscheme of $X\times \Pic^0(A)$:
$$B_m:=\{(x, P)\mid x\in \mathrm{Bs} |mK_X+P|\}\subset X\times \Pic^0(A).$$ 
Note that the defining ideal of $B_m$ is given by the relative evaluation map of $\omega_X^m\otimes (f\times Id_A)^*\cP_A$, where $\cP_A$ is the Poincar\'e bundle on $A\times \Pic^0(A)$.

By the assumption that  $f_*(\omega_X^{m}\otimes \cI_x)$ is not M-regular,   $\dim B_{m}=\dim X+\dim \Pic^0(A)-1$ and  $B_{m}$ dominates $X$ via the first projection (see for instance \cite[Lemma 4.2]{CCCJ}). Let $\cD_m$  be the union of the components of $B_{m}$  which are divisors of $X\times \Pic^0(A)$ and dominate $X$ via the first projection.

For $P\in \Pic^0(A)$ general, we have
\begin{eqnarray}\label{linearsystem}
|mK_X+P|=|M_P|+D_P+E_m,
 \end{eqnarray}
where $D_P$  is the fiber of $\cD_m\rightarrow \Pic^0(A)$ over the point $P$, $E$ is the common fixed divisor, and $|M_P|$ has no fixed divisor.

We note that $\dim |M_{P}|=p_{m}(X)-1\geq 1$ if $p_m(X)\geq 2$ or $M_P=0$ when $p_m(X)=1$.

Since $f: X\rightarrow A$ is the Albanese morphism of $X$, we denote by $\Phi_m: \Pic^0(A)\rightarrow \Pic^0(A)$ the  Abel-Jacobi map induced by $\cD$, namely we have $$\mathcal O_X(D_{P_1}-D_{P_2})=\Phi_m(P_1)-\Phi_m(P_2)$$ for $P_1$, $P_2\in \Pic^0(A)$ general and $\Phi_m(0)=0$. We denote by $\PB_m\subset \Pic^0(A)$ the image of $\Phi$ and $\PC_m\subset \Pic^0(A)$ the kernel of $\Phi_m$.

\begin{lemm}\label{easy}\cite[Lemma 5.1]{BLNP}
We have $\Phi_m\circ \Phi_m=\Phi_m$ and hence $\Pic^0(A)=\PB_m+\PC_m\simeq \PB_m\times \PC_m$.
\end{lemm}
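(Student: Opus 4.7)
The lemma combines two assertions: the idempotency $\Phi_m\circ\Phi_m=\Phi_m$ and the induced decomposition $\Pic^0(A)=\PB_m+\PC_m\simeq\PB_m\times\PC_m$. The decomposition is a formal consequence of idempotency, so all the content lies in establishing $\Phi_m^2=\Phi_m$.

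\noindent As a preliminary step, I would verify that $\Phi_m$ is a homomorphism of abelian varieties. Flatness of $\cD_m\to\Pic^0(A)$ guarantees that $P\mapsto\cO_X(D_P)$ is a morphism from $\Pic^0(A)$ into a component of $\Pic(X)$. Subtracting $\cO_X(D_0)$ and using the Albanese identification $f^*\colon\Pic^0(A)\xrightarrow{\sim}\Pic^0(X)$ (available because $f$ is the Albanese morphism) produces $\Phi_m\colon\Pic^0(A)\to\Pic^0(A)$ with $\Phi_m(0)=0$; the rigidity lemma then forces $\Phi_m$ to be a group homomorphism, so $\PB_m=\mathrm{Im}\,\Phi_m$ and $\PC_m=\ker\Phi_m$ are abelian subvarieties.

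\noindent For idempotency, the strategy is to compute $\cO(\cD_m)$ explicitly on $X\times\Pic^0(A)$ via the see-saw principle. The relation $\cO(\cD_m)|_{X\times\{P\}}=\cO_X(D_0)\otimes f^*\Phi_m(P)$ determines the class up to a pullback from $\Pic^0(A)$:
\[
\cO(\cD_m)\simeq \pi_X^*\cO_X(D_0)\otimes (f\times\Phi_m)^*\cP_A\otimes\pi_{\Pic^0(A)}^*\cN,
\]
where $\cP_A$ is the Poincar\'e bundle on $A\times\widehat A$ and $\cN\in\Pic(\Pic^0(A))$. Now restrict to slices $\{x\}\times\Pic^0(A)$ for $x\in X$ general: the fiber $T_x:=\{P:\,x\in D_P\}$ is a divisor whose class in $\Pic^0(\Pic^0(A))=A$ (modulo a constant translation) equals $\widehat{\Phi_m}(f(x))$. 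This gives a second Abel--Jacobi morphism $X\to A$, $x\mapsto[T_x]$, that factors through $f$ as $\widehat{\Phi_m}\circ f$. Comparing the two Abel--Jacobi pictures under Poincar\'e duality and invoking the universal property of the Albanese, one shows that the induced endomorphism of $\Pic^0(A)$, applied to elements of $\PB_m$, must be the identity; this yields $\Phi_m|_{\PB_m}=\mathrm{id}_{\PB_m}$, which is equivalent to $\Phi_m^2=\Phi_m$.

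\noindent Once idempotency is known, the decomposition is formal. For any $P\in\Pic^0(A)$, write $P=\Phi_m(P)+(P-\Phi_m(P))$; the first summand lies in $\PB_m$ and the second in $\PC_m$, since $\Phi_m(P-\Phi_m(P))=\Phi_m(P)-\Phi_m^2(P)=0$. Moreover, if $Q\in\PB_m\cap\PC_m$, writing $Q=\Phi_m(R)$ and using $\Phi_m(Q)=0$ together with $\Phi_m^2=\Phi_m$ gives $0=\Phi_m(Q)=\Phi_m^2(R)=\Phi_m(R)=Q$, so $\PB_m\cap\PC_m=\{0\}$ and the sum is direct.

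\noindent\textbf{Main obstacle.} The delicate step is identifying $\Phi_m|_{\PB_m}$ with the identity: the see-saw computation shows that $\Phi_m$ and the ``transpose'' endomorphism computed from $\{T_x\}$ carry the same underlying data, but extracting idempotency from this duality requires careful bookkeeping of Poincar\'e normalizations and an argument (essentially the one in \cite{BLNP}) that the fixed divisor $D_P$ is, up to linear equivalence, rigid enough to make $\Phi_m$ a genuine projection rather than a nontrivial self-isogeny of $\PB_m$. This is the technical heart, while the homomorphism property and the decomposition are routine.
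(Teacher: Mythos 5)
Your preliminary observation that $\Phi_m$ is a homomorphism of abelian varieties by rigidity is correct (it is used implicitly in the paper), and the formal deduction of $\Pic^0(A)\simeq\PB_m\times\PC_m$ from idempotency is fine. The gap is exactly where you flag it: the see-saw/Poincar\'e-duality picture you sketch does not in fact establish $\Phi_m\circ\Phi_m=\Phi_m$, and your ``Main obstacle'' paragraph concedes the remaining step is ``essentially the one in \cite{BLNP}'', which makes the proposal circular rather than complete.

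To see why the see-saw route cannot suffice on its own, observe that the formula you write down for $\cO(\cD_m)$ on $X\times\Pic^0(A)$, and the identification of the slice map $x\mapsto[T_x]$ with $\widehat{\Phi_m}\circ f$, are both only re-encodings of the \emph{definition} of $\Phi_m$: they hold verbatim with $\Phi_m$ replaced by an arbitrary endomorphism $\phi$ of $\Pic^0(A)$ recording how the fiber divisor moves, and they impose no constraint forcing a projector. Nothing in that bookkeeping distinguishes $\Phi_m$ from, say, $2\Phi_m$, which is not idempotent. The paper's proof instead uses the substantive input that $f_*\omega_X^m$ is IT$^0$ (Lombardi--Popa--Schnell), so that $h^0(X,mK_X+Q)=p_m(X)$ is constant in $Q\in\Pic^0(A)$. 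For $P_1,P_2$ general one has $M_{P_1}+D_{P_2}+E_m\sim mK_X+P_1+\Phi_m(P_2-P_1)$; since $\dim|M_{P_1}|=p_m(X)-1=\dim|mK_X+P_1+\Phi_m(P_2-P_1)|$, the inclusion $|M_{P_1}|+D_{P_2}+E_m\subset|mK_X+P_1+\Phi_m(P_2-P_1)|$ is an equality of complete linear systems, and since $|M_{P_1}|$ has no fixed divisor the fixed part of the right-hand side is $D_{P_2}+E_m$. But for $P_1$ general the fixed part is $D_{P_1+\Phi_m(P_2-P_1)}+E_m$, so $D_{P_1+\Phi_m(P_2-P_1)}=D_{P_2}$, that is, $\Phi_m(P_1+\Phi_m(P_2-P_1))=\Phi_m(P_2)$, which by the homomorphism property is exactly $\Phi_m^2=\Phi_m$. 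This dimension count, resting on the constancy of $h^0$, is the ingredient your proposal omits, and it has no substitute inside the duality computation.
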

\begin{proof}
 Let $P_1, P_2\in\Pic^0(A)$ be general, then
$$M_{P_1}+D_{P_2}+E_m\sim mK_X+P_1+\Phi_{m}(P_2-P_1).$$
Since $f_{*}\omega_X^{m}$ is IT$^0$, $h^0(X, mK_X+Q)=p_{m}(X)$ for any $Q\in \Pic^0(A)$.
Thus $D_{P_2}+E_m$ is the fixed part of $|mK_X+P_1+\Phi_{m}(P_2-P_1)|$. Thus $\Phi_{m}\circ \Phi_{m}(P_2-P_1)=\Phi_{m}(P_2-P_1)$ and we have $\Phi_{m}\circ \Phi_{m}=\Phi_{m}$. Hence $\Pic^0(A)= \mathrm{Im}(\Phi_{m})+\mathrm{Im}(\mathrm{Id}_{A}-\Phi_{m})= \mathrm{Im}(\Phi_{m})+\mathrm{ker}(\Phi_{m})=\PB_{m}+\PC_{m}\simeq \PB_{m}\times \PC_{m}$.
\end{proof}
Since $\cD_m$ dominates $\Pic^0(A)$, $\PB_m$ is a positive dimensional abelian subvariety of $\Pic^0(A)$. Given $P\in\Pic^0(A)$, we write $P_B=\Phi_m(P) \in \PB_m$ and $P_C=P-P_B\in \PC_m$.

Let $D=D_P-\Phi_m(P)$ for $P\in\Pic^0(A)$ general. Note that $D$ is  independent of the choice of general $P$ by the definition of $\Phi_m$.  Consider the composition of morphisms $$X\times \Pic^0(A)\xrightarrow{f\times \mathrm{Id}_{\Pic^0(A)}}A\times\Pic^0(A)\xrightarrow{\mathrm{Id}_A\times \Phi_m} A\times\Pic^0(A),$$ we let $\cP_B$ on $X\times \Pic^0(A)$ be the pull-back of the Poincar\'e bundle $ A\times\Pic^0(A)$ via this composition of morphisms. Then by the see-saw principle, there exists a divisor $\Theta$ on $\Pic^0(A) $ such that we have an isomorphism of line bundles $$\cO_{X\times\Pic^0(A)}(\cD_m)\simeq p_1^*\cO_X(D)\otimes p_2^*\cO_{\Pic^0(A)}(\Theta)\otimes\cP_B,$$ where $p_1$ and $p_2$ are respectively the projections from $X\times\Pic^0(A)$ to $X$ and $\Pic^0(A)$.

For $P\in\Pic^0(A)$ general, $D_P$ is linear equivalent to $D+P_B$. Let $M=mK_X-D-E_m$. Then $M_P$ is linear equivalent to $M+P_C$ for $P\in\Pic^0(A)$ general by (\ref{linearsystem}).

\begin{lemm}\label{L-property}
We have  $V^0(M+E_m)=V^0(M)=\PC_m$ and $h^0(X, M+P_C+E_m)=h^0(X, M+P_C)=p_{m}(X)$ for any $P_C\in\PC_m$. Similarly, $V^0(D+E_m)=V^0(D)=\PB_m$ and $h^0(X, D+P_B+E_m)=h^0(X, D+P_B)=1$ for any $P_B\in \PB_m$.
  \end{lemm}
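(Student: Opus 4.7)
The plan is to establish the $M$-identities first and then invoke symmetry for the $D$-identities. The main tools are the IT$^0$ property of $f_*\omega_X^m$ from \cite{LPS}, upper semicontinuity of $h^0$, and the see-saw description $\cO(\cD_m) \simeq p_1^*\cO(D) \otimes p_2^*\cO(\Theta) \otimes \cP_B$ recalled just above the lemma.

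For the identities $h^0(X, M + P_C) = h^0(X, M + E_m + P_C) = p_m(X)$, I would proceed in two steps. For $P \in \Pic^0(A)$ generic, the decomposition $|mK_X + P| = |M_P| + D_P + E_m$ with $|M_P|$ fixed-divisor free yields a bijection between these linear systems, so $h^0(X, M_P) = h^0(X, mK_X + P) = p_m(X)$ by IT$^0$. Using the identification $[M_P] = M + P_C$ in $\Pic(X)$, this gives $h^0(X, M + P_C) = p_m(X)$ for $P_C$ in a nonempty Zariski open subset of $\PC_m$ (corresponding to those cosets $P_C + \PB_m$ meeting the generic locus). Fixing an effective representative $D^{\ast} \in |D|$, the chain of injections $\cO_X(M + Q) \hookrightarrow \cO_X(M + E_m + Q) \hookrightarrow \cO_X(mK_X + Q)$ obtained by multiplication by the defining sections of $E_m$ and $D^{\ast}$ provides the upper bound $h^0(X, M + Q) \leq h^0(X, M + E_m + Q) \leq p_m(X)$ for every $Q \in \Pic^0(A)$. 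Upper semicontinuity along $\PC_m$ then upgrades the generic equality to hold for every $P_C \in \PC_m$.

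The inclusion $\PC_m \subseteq V^0(M) \subseteq V^0(M + E_m)$ is immediate. For the reverse $V^0(M + E_m) \subseteq \PC_m$, I would argue by contradiction: suppose $Q \in V^0(M + E_m)$ with $Q_B := \Phi_m(Q) \neq 0$, chosen generic in its component so that the decomposition $|mK_X + Q| = |M_Q| + D_Q + E_m$ holds. Any effective $N \in |M + E_m + Q|$ yields $N + D^{\ast} \in |mK_X + Q|$, which we can write as $\tilde N + D_Q + E_m$ with $\tilde N \in |M_Q|$. Using the see-saw identification $\cO(D_Q) \simeq \cO(D) \otimes g^{\ast}Q_B$ (with $g : X \to B_m$ the induced morphism to the abelian variety dual to $\PB_m$), $[D^{\ast}]$ and $[D_Q]$ are distinct linear classes; combined with $\cD_m$ having no horizontal components (these are absorbed into $E_m$ by construction), the effective divisors $D^{\ast}$ and $D_Q$ share no divisorial components outside $E_m$. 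The equation $N + D^{\ast} = \tilde N + D_Q + E_m$ then forces $\tilde N \geq D^{\ast}$ as divisors, contradicting $|M_Q|$ being fixed-divisor free.

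The $D$-identities follow by an entirely parallel argument swapping the roles of $\PC_m$ and $\PB_m$: the uniqueness of $D_P$ in its class $|D + P_B|$ for generic $P$ (forced by $\cD_m$ being a well-defined divisor on $X \times \Pic^0(A)$, leaving no ambiguity in choosing the fiber) gives $h^0(X, D + P_B) = h^0(X, D + P_B + E_m) = 1$ generically, upper semicontinuity extends this to all $P_B \in \PB_m$, and the reverse inclusion $V^0(D + E_m) \subseteq \PB_m$ is the symmetric Abel-Jacobi argument. The main obstacle I anticipate is rigorously justifying the ``no shared components'' claim in the reverse inclusion, which requires careful bookkeeping of how $\cD_m$ decomposes over translates of $\PC_m$ and $\PB_m$ and exploits the construction-level fact that any common divisorial component of the fibers of $\cD_m$ would lie in $E_m$ rather than in $\cD_m$.
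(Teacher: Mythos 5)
Your treatment of the $h^0$-identities is fine and matches the paper's: the generic equality follows from the decomposition and the constancy of $h^0(X,mK_X+P)=p_m(X)$ (IT$^0$), and upper semicontinuity along $\PC_m$, combined with the chain $h^0(X,M+Q)\le h^0(X,M+E_m+Q)\le p_m(X)$, upgrades it to all of $\PC_m$. The problem is in the reverse inclusion $V^0(M+E_m)\subseteq\PC_m$, where your argument has two genuine gaps, and the symmetry you invoke for the $D$-statement does not actually hold.

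First, as you yourself flag, the ``no shared components'' claim for $D^{\ast}=D_0$ and $D_Q$ is not justified. Distinctness of the linear classes $[D^{\ast}]$ and $[D_Q]$ does not preclude common irreducible components, and the fact that the components of $\cD_m$ dominate $X$ does not prevent two specific fibers from sharing a prime divisor. Second, and more seriously, even if you grant that claim, the conclusion $\tilde N\geq D^{\ast}$ is drawn for \emph{one} $\tilde N\in|M_Q|$, namely the one arising from your chosen $N\in|M+E_m+Q|$. A fixed-divisor-free linear system certainly contains individual members dominating any given effective divisor; you would need \emph{every} member of $|M_Q|$ to contain $D^{\ast}$, which your construction does not deliver (the map $N\mapsto\tilde N$ from $|M+E_m+Q|$ to $|M_Q|$ is injective but is not shown to be surjective). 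So the stated contradiction with $|M_Q|$ being fixed-divisor-free does not follow.

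The paper sidesteps both issues by letting the auxiliary twist vary. For the $M$-statement it compares $|M+Q+E_m|+D_{P_B}\subset|mK_X+Q+P_B|=|M+Q_C|+D_{Q_B+P_B}+E_m$ for $P_B\in\PB_m$ \emph{general}: the inclusion of linear systems gives $D_{Q_B+P_B}\preceq\mathrm{Fix}|M+Q+E_m|+D_{P_B}$, and genericity of $P_B$ forces the moving divisor $D_{Q_B+P_B}$ to lie under the moving divisor $D_{P_B}$, hence $D_{Q_B+P_B}=D_{P_B}$, contradicting $Q_B\neq 0$. For the $D$-statement the paper's argument is genuinely different, not symmetric: it varies $P_C\in\PC_m$, observes that $\tilde D_Q+|M+P_C|$ and $|mK_X+Q+P_C|$ have the same dimension $p_m(X)-1$ and hence coincide, and then compares fixed parts to get $\tilde D_Q=D_{Q_B}+E_m$, which is impossible because these lie in linear classes differing by $Q_C\neq 0$. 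So what is missing from your proposal is the key idea of extracting the contradiction from a \emph{varying} twist rather than a fixed reference divisor $D^{\ast}$, and the realization that the two reverse inclusions require distinct (non-symmetric) arguments.
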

\begin{proof}
We know that for $P\in \Pic^0(A)$, $h^0(X, \cO_X(mK_X+P))=p_{m}(X)$ is constant. For $P$ general, we have $$|mK_X+P|=|M_P|+D_{P}+E_m,$$ and $M_P\sim M+P_C$ and $D_P\sim D+P_B$. Thus $V^0(M+E_m)\supset V^0(M)\supset \PC_m$ and $V^0(D+E_m)\supset V^0(D)\supset \PB_m$. By semicontinuity and the fact that $h^0(X, \cO_X(mK_X+P))$ is constant, 
$h^0(X, M+P_C+E_m)=h^0(X, M+P_C)=p_{m}(X)$ for any $P_C\in\PC_m$  and $h^0(X, D+P_B+E_m)=h^0(X, D+P_B)=1$ for any $P_B\in \PB_m$. 

In particular, $\cD$ is flat over $\Pic^0(A)$ and $D_P$ is the unique effective divisor linearly equivalent to $D+P_B$ for any $P\in\Pic^0(A)$.
Then, for any $P\in\Pic^0(A)$, $$|mK_X+P|=|M+P_C|+D_{P_B}+E_m.$$

It suffices to show that $V^0(M+E_m)=\PC_m$ and $V^0(D+E_m)=\PB_m$.
We argue by contradiction.

 Assume that there exists $Q=Q_B+Q_C\in\Pic^0(A)$ with $Q_B$ non-trivial such that $h^0(X, M+Q+E_m)>0$. Then for $P_B\in \PB_m$, we have
$$|M+Q+E_m|+D_{P_B}\subset |mK_X+Q+P_B|=|M+Q_C|+D_{Q_B+P_B}+E_m.$$ Thus
$D_{Q_B+P_B}\preceq \mathrm{Fix}|M+Q+E_m|+D_{P_B}$. Let $P_B\in \PB_m$ be general, we then have $D_{Q_B+P_B}\preceq D_{P_B}$, which is absurd.

Assume that there exists $Q=Q_B+Q_C\in\Pic^0(A)$ with $Q_C$ non-trivial such that $h^0(X, D+Q+E_m)>0$. Let $\tilde{D}_Q\in |D+Q+E_m|$. Then for $P_C\in\PC_m$, we have 
$$\tilde{D}_Q+|M+P_C|\subset |mK_X+Q+P_C|.$$ By dimension reason, the two linear systems are equal. Thus $$\tilde{D}_Q+|M+P_C|= |mK_X+Q+P_C|=|M+P_C+Q_C|+D_{Q_B}+E_m.$$
For $P_C\in \PC_m$ general,  both $|M+P_C|$ and $|M+P_C+Q_C|$ have no fixed part. Thus $\tilde{D}_Q=D_{Q_B}+E_m$ is the fixed part of $ |mK_X+Q+P_C|$, which is absurd.
\end{proof}

\begin{prop}\label{surj1}
The natural morphism
\begin{eqnarray*}&&f_{*}\mathcal O_X(M)\otimes f_{*}\mathcal O_X(D)\xrightarrow{\cdot E_{m}} f_{*}\mathcal O_X(mK_X) 
\end{eqnarray*}
is surjective.
Hence restricting to $F$, we have the surjective map
\begin{eqnarray*}&& H^0(F, M|_F)\otimes H^0(F, D|_F)\xrightarrow{\cdot E_m|_F} H^0(F, mK_F) 
\end{eqnarray*}
\end{prop}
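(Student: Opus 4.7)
The plan is to first establish surjectivity of the multiplication map on global sections after a general twist by $P \in \Pic^0(A)$, then upgrade to sheaf-level surjectivity on $A$, and finally restrict to the fiber $F$. For $P = P_B + P_C \in \PB_m \times \PC_m = \Pic^0(A)$ general, Lemma \ref{L-property} gives $|mK_X+P| = |M+P_C| + D_{P_B} + E_m$ with $h^0(X, D+P_B) = 1$ and $h^0(X, M+P_C) = h^0(X, mK_X+P) = p_m(X)$. Every section of $mK_X+P$ is therefore uniquely of the form $\tau \cdot \sigma_{D_{P_B}} \cdot \sigma_{E_m}$ for some $\tau \in H^0(X, M+P_C)$, and so the multiplication-by-$\sigma_{E_m}$ map
\[
H^0(X, M+P_C) \otimes H^0(X, D+P_B) \longrightarrow H^0(X, mK_X+P)
\]
is surjective.

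To upgrade to a sheaf statement on $A$, I would use the projection formula (treating $P_B$, $P_C$ as pullbacks from $A$) together with the cup product through the diagonal $\Delta\colon A\to A\times A$ to factor the above surjection as
\[
H^0(X, M+P_C)\otimes H^0(X, D+P_B) \to H^0(A, (f_*\cO_X(M)\otimes f_*\cO_X(D))\otimes P) \to H^0(X, mK_X+P),
\]
where the second arrow is $H^0$ of $\varphi \otimes P$. Hence $H^0(\varphi\otimes P)$ is surjective for $P$ in a non-empty open subset $U \subset \Pic^0(A)$. Letting $\cQ$ denote the cokernel of $\varphi$, the long exact sequence gives $H^0(A,\cQ\otimes P) = 0$ for all $P\in U$. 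Since $f_*\cO_X(mK_X)$ is IT$^0$ by \cite{LPS}, it is continuously globally generated \cite{PP}, and CGG passes to quotients, so $\cQ$ is CGG; combined with the open-set vanishing this forces $\cQ = 0$.

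Finally, for a general $a\in A$ all three sheaves are locally free near $a$ (by IT$^0$ for $f_*\cO_X(mK_X)$ and by generic freeness for the others), so cohomology and base change restrict the now-surjective $\varphi$ to the claimed surjection $H^0(F, M|_F) \otimes H^0(F, D|_F) \to H^0(F, mK_F)$ on $F = f^{-1}(a)$. The main obstacle is the passage from generic $H^0$-surjectivity to sheaf-level surjectivity: it hinges on combining CGG of the target, which passes to the cokernel, with the vanishing of $H^0(\cQ\otimes P)$ on the open set $U$, so that the CGG evaluation $\bigoplus_{P\in U}H^0(A,\cQ\otimes P)\otimes P^{-1}\to \cQ$ is simultaneously surjective and zero, forcing $\cQ = 0$.
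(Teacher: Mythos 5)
Your overall strategy is the same as the paper's: both proofs rest on the identity $|mK_X+Q| = |M+Q_C| + D_{Q_B} + E_m$ from Lemma \ref{L-property} together with the $M$-regularity (hence continuous global generation) of $f_*\omega_X^m$. However, there is a gap in the specific step where you pass from generic $H^0$-surjectivity to vanishing of the cokernel.

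You assert that surjectivity of $H^0(\varphi\otimes P)$ ``gives $H^0(A,\cQ\otimes P)=0$ for all $P\in U$'' via the long exact sequence. This does not follow. Writing $\mathcal{I}=\operatorname{Im}(\varphi)$, the relevant exact sequence is
$0\to H^0(\mathcal I\otimes P)\to H^0(f_*\cO_X(mK_X)\otimes P)\to H^0(\cQ\otimes P)\to H^1(\mathcal I\otimes P)\to\cdots$,
and surjectivity of $H^0(\varphi\otimes P)$ only tells you the connecting map $H^0(f_*\cO_X(mK_X)\otimes P)\to H^0(\cQ\otimes P)$ is \emph{zero}, not that $H^0(\cQ\otimes P)$ itself vanishes -- a priori $H^0(\cQ\otimes P)$ could inject nontrivially into $H^1(\mathcal I\otimes P)$, and you have no vanishing statement for the latter. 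Consequently, your final sentence (``the CGG evaluation $\bigoplus_{P\in U}H^0(A,\cQ\otimes P)\otimes P^{-1}\to \cQ$ is simultaneously surjective and zero'') is not warranted: the source of that map need not be zero.

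The repair is simple and brings you precisely to the paper's argument: you do not need the cokernel at all. Since $f_*\cO_X(mK_X)$ is $M$-regular, the evaluation map $\bigoplus_{Q\in\Pic^0(A)}H^0(f_*\cO_X(mK_X)\otimes Q)\otimes Q^{-1}\to f_*\cO_X(mK_X)$ is surjective. For each $Q$, the factorization you already established (via Lemma \ref{L-property} and the cup product) shows that $H^0(f_*\cO_X(mK_X)\otimes Q)\otimes Q^{-1}\to f_*\cO_X(mK_X)$ lands inside $\operatorname{Im}(\varphi)$, because every section of $f_*\cO_X(mK_X)\otimes Q$ is a product of sections of $f_*\cO_X(M)\otimes Q_C$ and $f_*\cO_X(D)\otimes Q_B$ times $E_m$. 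Hence the image of $\varphi$ contains the image of a surjective map and $\varphi$ is surjective. (Equivalently, if you insist on using $\cQ$: the composition of the $M$-regular evaluation with the projection to $\cQ$ is surjective and kills each summand because each $H^0(f_*\cO_X(mK_X)\otimes Q)\to H^0(\cQ\otimes Q)$ is zero; hence $\cQ=0$. But the intermediate claim $H^0(\cQ\otimes P)=0$ should be dropped.)

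The restriction-to-$F$ step at the end is fine once sheaf-level surjectivity is in place.
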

\begin{proof}
We know that $f_{*}\omega_X^{m}$ is IT$^0$ and hence is M-regular. Thus by \cite{PP}, the evaluation map
$$\bigoplus_{Q\in \Pic^0(A)} H^0(A, f_{*}\mathcal O_X(mK_X)\otimes Q)\otimes Q^{-1}\rightarrow f_{*}\mathcal O_X(mK_X)$$ is surjective.

For $Q\in \Pic^0(X)$, we write  $Q=Q_B+ Q_C$ with $Q_B\in \PB_{m}$ and $Q_C\in \PC_{m}$. By Lemma \ref{L-property}, we have $$|mK_X+Q|=| M+Q_C|+ D_{Q_B}+E_m.$$
 Hence $$H^0(f_{*}\mathcal O_X(M)\otimes Q_C)\otimes H^0(f_{*}\mathcal O_X(D)\otimes Q_B)\xrightarrow{\cdot E_m} H^0(f_{*}\mathcal O_X(mK_X)\otimes Q) $$ is an isomorphism.

The composed surjective map
\begin{eqnarray*}
\bigoplus_{Q\in \Pic^0(A)}\big(H^0(f_{*}\mathcal O_X(M)\otimes Q_C)\otimes H^0(f_{*}\mathcal O_X(D)\otimes Q_B)\big)\otimes Q^{-1}\rightarrow \\ f_{*}\mathcal O_X(mK_X)
\end{eqnarray*}
factors through the map \begin{eqnarray}\label{surj}f_{*}\mathcal O_X(M)\otimes f_{*}\mathcal O_X(D)\rightarrow f_{*}\mathcal O_X(mK_X).\end{eqnarray} Hence (\ref{surj}) is also  surjective. Thus considering the above map restricted on $F$, we have the surjective map $$H^0(F, M|_F)\otimes H^0(F, D|_F)\rightarrow H^0(F, mK_F).$$ 
\end{proof}

We also need to consider the irreducible components of $\cD_m$.
Write $\cD_m=\sum_{j=1}^s a_j\tilde{\cD}_{j}$ as the sum of irreducible components and let $$\Phi_{mj}: \Pic^0(A)\rightarrow \Pic^0(A)$$ be the Abel-Jacobi map induced by $\tilde{\cD}_{j}$, namely $\Phi_{mj}(P)=\cO_X(\tilde{D}_{jP}-\tilde{D}_{j})$, where $\tilde{D}_{jP}$ is the fiber of $\tilde{\cD}_j$ over $P\in\Pic^0(A)$ and $\tilde{D}_{j}$ is the fiber over the neutral point of $\Pic^0(A)$. Let $\PB_{mj}$ be the image of $\Phi_{mj}$.  Since $\tilde{\cD}_{j}$ dominates $\Pic^0(A)$, $\dim\PB_{mj}>0$ for each $1\leq j\leq s$.
\begin{prop}\label{D-property}
We have
\begin{itemize}
\item[(1)] $\cD$ is reduced, i.e. $a_j=1$ for $1\leq j\leq s$ and $\cO_X(\tilde{D}_{j_1}-\tilde{D}_{j_2})\notin \Pic^0(A)=\Pic^0(X)$ for $1\leq j_1\neq j_2\leq s$;
\item[(2)]$\Phi_{mj}\circ \Phi_{mj}=\Phi_{mj}$ and $\Phi_{mj_1}\circ\Phi_{mj_2}=0$ for $j_1\neq j_2$ and $\PB_i\simeq \PB_{m1}\times\cdots\times \PB_{ms}$;
 
\end{itemize}
\end{prop}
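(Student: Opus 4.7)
The plan is to extend the Lemma \ref{easy}-style argument from the full fixed part $\cD_m$ to its individual irreducible components $\tilde{\cD}_j$, exploiting the uniqueness from Lemma \ref{L-property} (namely $h^0(D + Q) = 1$ for $Q \in \PB_m$). The key computation is, for $P_1, P_2 \in \Pic^0(A)$ general and a fixed index $k$, to form the effective divisor
$$F := M_{P_1} + \sum_{j \neq k} a_j \tilde{D}_{j, P_1} + a_k \tilde{D}_{k, P_2} + E_m$$
obtained from $M_{P_1} + D_{P_1} + E_m$ by swapping only the $k$-th component's fiber. A direct check in $\Pic(X)$ gives $F \sim mK_X + Q$ with $Q = P_1 + a_k \Phi_{mk}(P_2 - P_1)$, so $F \geq D_Q + E_m = \sum_j a_j \tilde{D}_{j, Q} + E_m$.

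For $P_1, P_2$ generic, the irreducible divisors $\tilde{D}_{j, P_1}$ (for $j \neq k$), $\tilde{D}_{k, P_2}$, and a generic member of $|M_{P_1}|$ share no components, so unique factorization forces each $\tilde{D}_{j, Q}$ on the right to match a fiber of the same family $\tilde{\cD}_j$ on the left: $\tilde{D}_{j, Q} = \tilde{D}_{j, P_1}$ for $j \neq k$ and $\tilde{D}_{k, Q} = \tilde{D}_{k, P_2}$. Each $\Phi_{mj}: \Pic^0(A) \to \Pic^0(A)$ is a morphism of abelian varieties sending $0$ to $0$, hence a group homomorphism. Translating the above equalities and letting $P_2 - P_1$ vary in $\Pic^0(A)$ yields $\Phi_{mj} \circ \Phi_{mk} = 0$ for $j \neq k$ and $a_k \Phi_{mk}^2 = \Phi_{mk}$.

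Restricting the second relation to $\PB_{mk} = \mathrm{Im}(\Phi_{mk})$ shows that $\Phi_{mk}|_{\PB_{mk}}$ is a right inverse to multiplication by $a_k$ on the positive-dimensional abelian variety $\PB_{mk}$; this forces $[a_k]$ to be an isomorphism, so $a_k = 1$ and $\Phi_{mk}^2 = \Phi_{mk}$. With all $a_j = 1$ and the orthogonal-idempotent relations, each $\PB_{mj}$ is an abelian subvariety of $\PB_m$, any two have finite intersection (since $\Phi_{mj}|_{\PB_{mj'}} = 0$ for $j \neq j'$ while $\Phi_{mj}|_{\PB_{mj}} = \mathrm{id}$), and $\sum_j \PB_{mj} = \PB_m$, yielding $\PB_m \simeq \PB_{m1} \times \cdots \times \PB_{ms}$.

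Finally, for the non-linear-equivalence claim, suppose $\cO_X(\tilde{D}_{j_1} - \tilde{D}_{j_2}) = Q \in \Pic^0(A)$ for some $j_1 \neq j_2$. Then $D' := (D - \tilde{D}_{j_2}) + \tilde{D}_{j_1}$ is effective of class $D + Q$, so $Q \in V^0(D) = \PB_m$ and by Lemma \ref{L-property} $D' = D_Q$. But $D'$ contains $\tilde{D}_{j_1}$ with multiplicity $2$ while $D_Q = \sum_j \tilde{D}_{j, Q}$ has all multiplicities equal to $1$, a contradiction. The delicate point I expect is the general-position input in the second paragraph: for generic $P_1, P_2$ the irreducible fibers coming from distinct families $\tilde{\cD}_j$ must genuinely be distinct and avoid the components of a generic $M_{P_1}$, which should follow from the irreducibility of each $\tilde{\cD}_j$ together with the flatness of $\cD_m \to \Pic^0(A)$ established in Lemma \ref{L-property}.
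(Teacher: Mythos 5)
Your overall strategy for part (2)---swap only the $k$-th component's fiber, compute the twist, and match fixed parts---is exactly the paper's approach. But for part (1) you take a genuinely different route than the paper, and that route has a gap.

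The paper proves (1) \emph{first}, by direct dimension arguments against the uniqueness in Lemma \ref{L-property}: if $a_j\geq 2$, the moving family $\{\tilde D_{j,\cdot}\}$ produces a positive-dimensional linear system inside $|D+P|$, contradicting $h^0(X,D+P)=1$; and $\cO_X(\tilde D_{j_1}-\tilde D_{j_2})\in\Pic^0(A)$ would similarly force $h^0>1$. Only then is (1) used to carry out the fiber-matching in (2). You instead try to establish the idempotency relations first and read off $a_k=1$ from $a_k\Phi_{mk}^2=\Phi_{mk}$---a clean deduction, since $\Phi_{mk}$ restricted to $\PB_{mk}$ is a surjective endomorphism of a positive-dimensional abelian variety and $[a_k]$ must then be invertible. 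The problem is the matching step you flag as ``the delicate point.'' To go from $\sum_j a_j\tilde D_{j,Q}=\sum_{j\neq k}a_j\tilde D_{j,P_1}+a_k\tilde D_{k,P_2}$ to a \emph{family-index-preserving} matching, you must rule out $\tilde D_{j_1,Q}=\tilde D_{j_2,P}$ for $j_1\neq j_2$. That is precisely what can happen if $\tilde{\cD}_{j_2}=(\mathrm{Id}_X\times t_R)_*\tilde{\cD}_{j_1}$, i.e.\ if $\cO_X(\tilde D_{j_1}-\tilde D_{j_2})\in\Pic^0(A)$; irreducibility and flatness alone do not exclude it, because distinctness of components only gives $\tilde D_{j_1,Q}\neq\tilde D_{j_2,Q}$ over the \emph{same} general $Q$, not over different base points. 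You prove the non-linear-equivalence claim only at the end, using $a_j=1$, so as written the argument is circular. (This could perhaps be patched by a specialization $P_2\to P_1$ forcing the matching permutation to be the identity, but you would need to spell that out.)

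There is a second, smaller gap in your final paragraph: having set $D'=(D-\tilde D_{j_2})+\tilde D_{j_1}=D_Q$, you assert that $D_Q=\sum_j\tilde D_{j,Q}$ ``has all multiplicities equal to $1$.'' That only holds for \emph{general} $Q$. For the specific $Q=\cO_X(\tilde D_{j_1}-\tilde D_{j_2})$ it is entirely possible a priori that two fibers $\tilde D_{j,Q}$ coincide, so no contradiction with a multiplicity-$2$ component is immediate. The paper's argument for this point is different (and terse): it derives $h^0(X,\cO(\tilde D_{j_1}+\tilde D_{j_2})\otimes Q)>1$ from the moving family $\{\tilde D_{j_1,\cdot}\}$ and contradicts Lemma \ref{L-property} directly, avoiding any fiber-multiplicity bookkeeping at a special point.
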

\begin{proof}
Note that for any $P\in\PB_{mj}$, $|\tilde{D}_j+P|$ is non-empty by the definition of $\Phi_{mj}$. Thus if $a_j>1$, we have $\dim |a_j\tilde{D}_j+P|>1$, which contradicts the fact that $h^0(X, D+P)=1$ for any $P\in\Pic^0(A)$. Thus $\cD$ is reduced.

Similarly, if $\tilde{D}_{j_1}\sim \tilde{D}_{j_2}+Q$ for some $Q\in\Pic^0(A)$, then $h^0(X, \tilde{D}_{j_1}+\tilde{D}_{j_2}+Q)>1$, which is absurd.

We now consider $(2)$. For $P_1, P_2\in\Pic^0(A)$ general, $$|mK_X+P_1+\Phi_{mj}(P_2-P_1)|\supset |M_{P_1}|+\sum_{k\neq j}\tilde{D}_{kP_1}+\tilde{D}_{jP_2}+E_m.$$ By the dimension reason, the two linear systems are the same. Since $\cO_X(\tilde{D}_{j_1}-\tilde{D}_{j_2})\notin \Pic^0(A)$ for $j_1\neq j_2$, we conclude that $$\tilde{D}_{k (P_1+\Phi_{mj}(P_2-P_1))}=\tilde{D}_{kP_1}$$ for $k\neq j$ and $$\tilde{D}_{j (P_1+\Phi_{mj}(P_2-P_1))}=\tilde{D}_{jP_2}.$$ Therefore 
$\Phi_{mj}\circ \Phi_{mj}=\Phi_{mj}$ and $\Phi_{mj_1}\circ\Phi_{mj_2}=0$ for $j_1\neq j_2$. Hence $\PB_i\simeq \PB_{m1}\times\cdots\times \PB_{ms}$.

\end{proof}
 
\subsection{$F$ is a surface with $p_g(F)>0$ and $q(F)=0$}

We now assume that $F$ is a surface of general type and this subsection is devoted to the proof of Theorem \ref{fiber=2}.

 By assumption $p_g(F)>0$, hence $f_{*}\omega_X$ is a nonzero sheaf on $A$. By \cite{PPS}, we know that $f_{*}\omega_X$ admits the Chen-Jiang  decomposition. We  denote by $\pi: X\rightarrow X_0$ the contraction from $X$ to its canonical model $X_0$. Since the fibers of $\pi$ are rationally chain connected (see \cite[Corollary 1.5]{HM}), we have the commutative diagram:
\begin{eqnarray*}
\xymatrix{
X\ar[r]^{\pi}\ar[dr]_{f}& X_0\ar[d]^{f_0}\\
& A.}
\end{eqnarray*}
Note that a connected component of a general fiber of $f_{0}$ is then the canonical model of the corresponding component of the fiber of $f$. Moreover, for any divisor $E$ which is $\pi$ exceptional, we have
\begin{eqnarray}\label{=}H^0(X, mK_X+P)=H^0(X, mK_X+E+P),
\end{eqnarray}
for any $m\geq 0$ and $P\in \Pic^0(A)$.

We have a small variant of Proposition \ref{criteria1}.
\begin{lemm} Assume that $f_{*}(\omega_X^2\otimes \cI_x)$ or $f_{*}(\omega_X^3\otimes \cI_x)$ is M-regular for $x\in X$ general, then $|5K_X+P|$ induces a birational map of $X$ for any $P\in\Pic^0(A)$.
\end{lemm}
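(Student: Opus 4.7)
The plan is to adapt the proof of Proposition~\ref{criteria1} with the specific pair $(m_1, m_2) = (2,3)$, so that $m_1 + m_2 = 5$. Two inputs about $F$ are required: that $|2K_F|$ is non-empty, which is immediate from $p_g(F) > 0$, and that $|3K_F|$ induces a birational map of $F$, which by Bombieri holds for surfaces of general type with $p_g > 0$ except in the two well-known Horikawa cases where the minimal model satisfies $(K_{F_0}^2, p_g(F_0)) \in \{(1,2),(2,3)\}$. Outside those exceptional cases the argument of Proposition~\ref{criteria1} carries over with essentially no change, so the task reduces to running that argument carefully and then handling the two special surfaces.

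Concretely, for $P \in \Pic^0(A)$ and two general points $x, y \in X$, let $F$ be the component of the Albanese fiber containing $x$. If $y \notin F$, pick $k \in \{2,3\}$ for which $f_*(\omega_X^k \otimes \cI_x)$ is M-regular; continuous global generation of M-regular sheaves (Pareschi-Popa) produces, for $Q \in \Pic^0(A)$ in a suitable open set, a section of $H^0(X, kK_X + Q)$ vanishing at $x$ and non-vanishing at $y$. Since $f_*\omega_X^{5-k}$ is IT$^0$ by Lombardi-Popa-Schnell, a general section of $|(5-k)K_X + P - Q|$ is nonzero at $y$; the product lies in $|5K_X + P|$ and separates $x$ and $y$. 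If instead $y \in F$ and $|3K_F|$ is birational, I would rerun the two sub-arguments of Proposition~\ref{criteria1}: when $f_*(\omega_X^3 \otimes \cI_x)$ is M-regular, separate $x$ and $y$ via the birational tricanonical restriction of continuously generated sections of $|3K_X + Q|$; when only $f_*(\omega_X^2 \otimes \cI_x)$ is M-regular, pick $s \in H^0(F, 3K_F)$ separating $x$ and $y$, lift $s$ to a sum $\sum s_i$ of sections of $|3K_X + P_i|$ via IT$^0$-ness of $f_*\omega_X^3$, and pair each $s_i$ with a section of $|2K_X + P - P_i|$ agreeing at $x$ and $y$ in a common affine chart (obtained from the continuous generation of $f_*(\omega_X^2 \otimes \cI_x)$) to assemble the desired section of $|5K_X + P|$.

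The main obstacle is the two Horikawa-type exceptional cases in which $|3K_F|$ fails to be birational (it has degree two onto its image), where one cannot directly invoke the Proposition with $(m_1, m_2) = (2,3)$. My plan there is either to switch to $|4K_F|$, which is always birational for minimal surfaces of general type by Bombieri, and adapt the pairing so that the $2$-canonical factor absorbs a $0$-canonical section built from the canonical involution, or to exploit the very restrictive numerical invariants of these surfaces, pass through the canonical model $X_0$ via \eqref{=}, and verify case-by-case that the given M-regularity hypothesis together with the explicit structure of $|mK_F|$ for small $m$ still produces enough sections of $|5K_X + P|$ to separate two general points of $F$. Because the exceptional surfaces form a completely classified one-parameter family, such a direct verification should close the argument.
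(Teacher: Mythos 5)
Your handling of the generic case matches the paper exactly: when $F$ is not a $(1,2)$ or $(2,3)$ surface, take $(m_1,m_2)=(2,3)$, note that $p_2(F)\geq 2$ and $|3K_F|$ is birational, and invoke Proposition \ref{criteria1}. The real content of this lemma, however, is the two Horikawa-type exceptional cases, and there your proposal has a genuine gap. The paper dispatches these by citing the specific arguments of \cite[Propositions 3.1 and 3.2]{CCCJ}, which analyze the degree-$2$ tricanonical involution on $(1,2)$ and $(2,3)$ surfaces directly. Your first fallback idea, switching to $|4K_F|$, does not work: Proposition \ref{criteria1} requires $m_1\geq 2$ (both in its stated hypotheses and because the Lombardi--Popa--Schnell IT$^0$ result needs $m\geq 2$), so you cannot pair $|4K_F|$ with anything and still land in $|5K_X+P|$; and the phrase about a ``$2$-canonical factor absorbing a $0$-canonical section built from the canonical involution'' does not describe a coherent modification of the separation argument. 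Your second fallback, a case-by-case verification from the classification of these surfaces, is essentially what the CCCJ propositions carry out, but you leave it entirely unwritten. In short, the proposal correctly identifies the strategy and the obstruction, but does not actually close the exceptional cases, which is precisely where the lemma goes beyond a routine application of Proposition \ref{criteria1}.
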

\begin{proof}
We know that $p_2(F)>1$ for any surfaces $F$ of general type. Moreover, if $F$ is not birational to a $(1, 2)$ surface or $(2, 3)$ surface, then  $|3K_F|$ induces a birational map of $F$ (see \cite[Section 7]{BPV}). We then conclude by Proposition \ref{criteria1}.

If $F$ is birational to a $(1, 2)$ surface or $(2, 3) $ surface, then $|3K_F|$ induces a degree $2$ map. We can conclude by Proposition \ref{criteria1} and the same argument in \cite[Proposition 3.1 and Proposition 3.2]{CCCJ}.
\end{proof}

From now on, we will assume that both $$f_{*}(\omega_X^2\otimes \cI_x)$$ and $$f_{*}(\omega_X^3\otimes \cI_x)$$ are not M-regular. We now apply the results in   Subsection \ref{AJ}. 

For $m=2$ and $3$, we consider (\ref{linearsystem}) and write respectively for $P\in\Pic^0(A)$
\begin{eqnarray*} |2K_X+P|=|M_P|+D_P+E_2;\\
|3K_X+P|=|L_P|+V_P+E_3,
\end{eqnarray*}
where $|M_P|$ and $|L_P|$ have no fixed components when $P$ is general and $D_P$ and $V_P$ are respectively the fibers of $\cD_2$ and $\cD_3$ over $P$.

There exist divisors $M$ and $D$ on $X$ such that $D_P\sim D+P_B$ and $|M_P|=|M+P_C|$, where $P=P_B+P_C$ and $P_B=\Phi_2(P)\in\PB_2$ and $P_C\in\PC_2$. 

Similarly, there exist divisors $L$ and $V$ on $X$ such that $V_Q\sim V+Q_B$ and $|L_Q|=|L+Q_C|$, where $Q=Q_B+Q_C$ with $Q_B=\Phi_3(Q)\in\PB_3$ and $Q_C\in\PC_3$. 

We can study the relation between $\cD_2$ and $\cD_3$ with the help of $|K_X+Q_0|$ for 
$Q_0\in V^0(K_X)$.
Note that  $$|K_X+Q_0|+|2K_X+P|\subset |3K_X+P+Q_0|.$$ Hence $$|K_X+Q_0|+|M_P|+D_P+E_2\subset |L_{P+Q_0}|+V_{P+Q_0}+E_3.$$ For $P\in\Pic^0(A)$ general, comparing the fixed parts of the above linear systems, we have
\begin{eqnarray}\label{relation2}V_{P+Q_0}+E_3\preceq D_P+\mathrm{Fix}|K_X+Q_0|+E_2.\end{eqnarray} Since each component of $\cD_2$ and $\cD_3$ dominates $\Pic^0(A)$, we hav3
\begin{eqnarray}\label{relation1}V_{P+Q_0}\preceq D_P
\end{eqnarray} for $P\in \Pic^0(A)$ general and hence for all $P\in\Pic^0(A)$. 
Hence $(\mathrm{Id}_X\times t_{Q_0})_*\cD_3$ is  a sub-divisor of $\cD_2$, where $t_{Q_0}$ is the translation by $Q_0$ on $\Pic^0(A)$. Let $\cD_2=\cS+(\mathrm{Id}_X\times t_{Q_0})_*\cD_3$ and let $S_P$ be the fiber of $\cS$ over $P\in \Pic^0(A)$.

We then have $D_P=V_{P+Q_0}+S_P$.
 
 By Lemma \ref{D-property}, we know that the image $\PB_3$ of $\Phi_3$ is an abelian subvariety of the image $\PB_2$ of $\Phi_2$. Moreover, there exists a abelian subvariety of $\PK$, which is the image of the map $\Phi_2-\Phi_3$, such that $\PB_2=\PB_3+\PK\simeq \PB_3\times \PK$ and $\PC_3=\PC_2+\PK\simeq \PC_2\times \PK$.
\begin{lemm}\label{V0}  Any translate through the origin of an irreducible component of $V^0(K_X)$ is contained in $ \PC_3$.
\end{lemm}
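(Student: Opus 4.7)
The plan is to exploit the divisorial inclusion $(\mathrm{Id}_X\times t_{Q_0})_*\cD_3\preceq \cD_2$ from (\ref{relation1}), valid for every $Q_0\in V^0(K_X)$, combined with the irreducible decompositions $\cD_3=\sum_j\tilde\cD_{3j}$ and $\cD_2=\sum_k\tilde\cD_{2k}$ and the reducedness of both divisors established in Proposition \ref{D-property}. For any $Q_0\in V^0(K_X)$ and any $j$, the translate $(\mathrm{Id}_X\times t_{Q_0})_*\tilde\cD_{3j}$ is irreducible and appears with multiplicity one inside the reduced divisor $\cD_2$, so it must coincide with some single component $\tilde\cD_{2k(j,Q_0)}$.

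The next step is to convert this combinatorial matching into a numerical constraint on $Q_0$. An argument parallel to Lemma \ref{L-property}, applied to each irreducible summand, shows that the fiber $\tilde V_{j,P}$ is the unique effective divisor in its linear equivalence class $\tilde V_j+\Phi_{3j}(P)$, and similarly for $\tilde D_{2k,P}$. Hence comparing fibers over $0\in\Pic^0(A)$ in the equality $(\mathrm{Id}_X\times t_{Q_0})_*\tilde\cD_{3j}=\tilde\cD_{2k(j,Q_0)}$ yields the relation $\Phi_{3j}(Q_0)=\cO_X(\tilde D_{2k(j,Q_0)}-\tilde V_j)$ in $\Pic^0(A)$. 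For each fixed $j$, the locus of $Q_0\in\Pic^0(A)$ admitting such a matching is therefore a finite union of cosets of $\ker\Phi_{3j}$, and this union contains $V^0(K_X)$.

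Finally, let $T$ be an irreducible component of $V^0(K_X)$ and $\PT_0$ its translate through the origin, an abelian subvariety of $\Pic^0(A)$. Since $T$ is irreducible (in particular connected), it lies inside a single coset of $\ker\Phi_{3j}$ for each $j$, so $\PT_0\subseteq\ker\Phi_{3j}$ for every $j$. Summing the fiberwise relations $\tilde V_{j,P}\sim\tilde V_j+\Phi_{3j}(P)$ shows that $\Phi_3=\sum_j\Phi_{3j}$ as homomorphisms on $\Pic^0(A)$, and therefore $\PT_0\subseteq\bigcap_j\ker\Phi_{3j}\subseteq\ker\Phi_3=\PC_3$, which is the desired conclusion. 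The main obstacle in this plan is justifying that an irreducible translate sitting inside $\cD_2$ must equal a full component rather than a proper sub-divisor, which rests on the reducedness statement from Proposition \ref{D-property}; once this combinatorial step is in place, the remainder is formal manipulation of Abel--Jacobi maps and cosets.
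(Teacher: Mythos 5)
Your proof is correct. The paper's own argument is a one‑liner: fix $P$ and let $Q_0$ range over $V^0(K_X)$; since every $V_{P+Q_0}$ is an effective sub‑divisor of the single divisor $D_P$, only finitely many divisors can occur, so $V_{P+Q_0}$ (and hence $\Phi_3(Q_0)$, read off from $V_{P+Q_0}\sim V+\Phi_3(P+Q_0)$) is locally constant in $Q_0$; irreducibility of a component $T\subset V^0(K_X)$ then forces $\Phi_3|_T$ to be constant, giving $\PT_0\subset\ker\Phi_3=\PC_3$. Your version is a component‑by‑component refinement of the same discreteness‑plus‑connectedness mechanism: you upgrade the fiberwise inclusion $V_{P+Q_0}\preceq D_P$ to the universal inclusion $(\mathrm{Id}_X\times t_{Q_0})_*\cD_3\preceq\cD_2$, then invoke the reducedness from Proposition \ref{D-property} to match each irreducible summand $\tilde\cD_{3j}$ (translated by $Q_0$) with a single component of $\cD_2$, deduce that each $\Phi_{3j}(Q_0)$ lies in a finite set, and conclude $\PT_0\subset\ker\Phi_{3j}$ for every $j$ before summing via $\Phi_3=\sum_j\Phi_{3j}$. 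This requires Proposition \ref{D-property} as an extra input that the paper's direct finiteness argument avoids, but in exchange it makes explicit the mechanism the paper compresses into "Fix $P$, we see that\ldots" and it yields the slightly stronger statement $\PT_0\subset\bigcap_j\ker\Phi_{3j}$. Both routes are sound and lead to the same conclusion.
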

\begin{proof} For $Q_0\in V^0(K_X)$ and $P\in\Pic^0(A)$, we have $V_{P+Q_0}\preceq D_P$. Fix $P$, we see that any translate through the origin of an irreducible component of $V^0(K_X)$ is contained in  $\Phi_3=\PC_3$.
 
\end{proof}

 \begin{lemm}\label{control-V0}
 We then have
\begin{itemize}
\item[(1)] Any translate through the origin of an irreducible component of $V^0(K_X+M+S)$ is contained in $\PC_3$.
\item[(2)] $V^0(K_X+V)\subset V^0(K_X)+\PB_3$.
\end{itemize}
\end{lemm}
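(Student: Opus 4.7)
The plan is to convert both statements into conditions on effective divisors in $|3K_X+P|$ using the identities $M+D+E_2\sim 2K_X$, $L+V+E_3\sim 3K_X$, and $D=V+S$ (as actual divisors), and then to analyse them with the rigid decomposition $|3K_X+P|=|L+P_{C,3}|+V_P+E_3$, whose fixed summand $V_P+E_3$ depends on $P$ only through the $\PB_3$-component $P_{B,3}$.

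For (1), the identities above give $K_X+M+S\sim 3K_X-V-E_2$, so any $P\in V^0(K_X+M+S)$ yields an effective $\Delta\in|3K_X+P|$ with $\Delta\succeq V+E_2$. Writing $\Delta=L'+V_P+E_3$ with $L'\in|L+P_{C,3}|$, I plan to argue that if $P_{B,3}\neq 0$ on some irreducible component of $V^0(K_X+M+S)$, then $V$ and $V_P$ are distinct effective divisors sharing no common components for generic $P$ along that component (using reducedness of $\cD_3$ from Proposition \ref{D-property}(1)); this would force $V$ to be a fixed subdivisor of $L'$, contradicting the base-point freeness of $|L+P_{C,3}|$ in codimension one for generic $P$. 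Hence $P_{B,3}=0$ on the entire component, so its translate through the origin lies in $\PC_3$.

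For (2), take $P\in V^0(K_X+V)$; if $P\in V^0(K_X)$ there is nothing to prove, so assume $P\notin V^0(K_X)$. Adjunction along the Cartier divisor $V$ yields the short exact sequence
\[
0\to \cO_X(K_X+P)\to \cO_X(K_X+V+P)\to \omega_V\otimes P|_V\to 0,
\]
and the vanishing $H^0(K_X+P)=0$ embeds $H^0(K_X+V+P)$ into $H^0(\omega_V\otimes P|_V)$, so $P\in V^0((f|_V)_*\omega_V)$ for $f|_V\colon V\to A$. Decomposing $V=\sum_j \tilde V_{3,j}$ according to $\cD_3=\sum_j\tilde{\cD}_{3,j}$ (Proposition \ref{D-property}), each $\tilde V_{3,j}$ moves in its linear equivalence class under translation by $\PB_{3,j}$. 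Applying the Chen--Jiang decomposition \cite{PPS} to each $(f|_{\tilde V_{3,j}})_*\omega_{\tilde V_{3,j}}$ and using $\PB_3=\prod_j \PB_{3,j}$, one reads off that every abelian subvariety appearing in $V^0((f|_V)_*\omega_V)$ contains some $\PB_{3,j}$, which then forces the $\PC_3$-component of $P$ to lie in $V^0(K_X)$ and thus $P\in V^0(K_X)+\PB_3$.

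The main obstacle, common to both parts, is managing the distinction between linear equivalence classes and actual effective divisors as the fixed parts $V_P$ vary with $P$. In (1) the key technical point is the ``no common component'' argument between $V$ and $V_P$ for generic $P$ in a hypothetical bad component, which relies decisively on the reducedness and distinctness statement of Proposition \ref{D-property}(1). In (2) the delicate step is to pin down $V^0((f|_V)_*\omega_V)$ in terms of $\PB_3$ and $V^0(K_X)$, which requires carefully tracking each irreducible piece $\tilde{\cD}_{3,j}$ and the corresponding Abel--Jacobi subvariety $\PB_{3,j}$, and then reassembling the information via Lemma \ref{easy} and Lemma \ref{V0}.
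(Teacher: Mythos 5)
Your proposal diverges from the paper in both parts, and each part has a genuine gap.

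\textbf{Part (1).} The paper's argument is a direct line-bundle computation: from $D_P = V_{P+Q_0}+S_P$ together with $M+D+E_2\sim 2K_X$ and $L+V+E_3\sim 3K_X$, one gets $K_X+M+S+E_2 = L+E_3$ up to the torsion twist $\Phi_3(Q_0)$, so
$V^0(K_X+M+S)\subset V^0(K_X+M+S+E_2) = V^0(L+E_3)-\Phi_3(Q_0)$, and $V^0(L+E_3)=\PC_3$ by Lemma~\ref{L-property}; the translate through origin of any component is then in $\PC_3$. Your approach instead produces one effective $\Delta\in|3K_X+P|$ with $\Delta\succeq V+E_2$, writes $\Delta=L'+V_P+E_3$, and wants a contradiction from $V\preceq L'$. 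But $L'$ is a \emph{single} member of $|L+P_{C,3}|$; containing $V$ in one member does not contradict the absence of a fixed divisorial part of the linear system, so no contradiction is reached. (There is also a bookkeeping slip: the relation is $D_P=V_{P+Q_0}+S_P$, so the ``actual-divisor'' identity is $D=V_{Q_0}+S$, which differs from $V+S$ by the torsion $\Phi_3(Q_0)$; this matters precisely because the statement only asserts inclusion of the \emph{translate through origin} in $\PC_3$.)

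\textbf{Part (2).} The paper again works directly with the linear systems: for $P\in V^0(K_X+V)$, $Q$ general, and $T$ a general member of $|M_Q|+S_Q$, the inclusion $|K_X+V+P|+T+E_2\subset|3K_X+P+Q|=|L_{P+Q}|+V_{P+Q}+E_3$ forces $V_{P+Q}\preceq G+E_2$ for $G\in|K_X+V+P|$ (since, $Q$ being general, $T$ shares no component with $V_{P+Q}+E_3$), whence $P-\Phi_3(P+Q)\in V^0(K_X+E_2)=V^0(K_X)$ by Lemma~\ref{ex} and \eqref{=}. Your route via the adjunction sequence $0\to\cO_X(K_X+P)\to\cO_X(K_X+V+P)\to\omega_V\otimes P|_V\to 0$ correctly yields $P\in V^0((f|_V)_*\omega_V)$ when $P\notin V^0(K_X)$, but the next step --- that every abelian subvariety occurring in $V^0((f|_V)_*\omega_V)$ contains some $\PB_{3,j}$ --- is asserted, not proved, and it is not a formal consequence of the Chen--Jiang decomposition applied to $\tilde V_{3,j}$; moreover, even granting it, the passage to ``the $\PC_3$-component of $P$ lies in $V^0(K_X)$'' is not justified. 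You would need a separate argument pinning down $V^0(\omega_V)$ in terms of $V^0(K_X)$ and $\PB_3$, which is essentially the content that the paper extracts instead by comparing fixed parts of $|3K_X+P+Q|$.

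In short: for (1) the paper's proof is a one-line identity plus Lemma~\ref{L-property}, whereas your component analysis does not close; for (2) the paper compares fixed parts of pluricanonical systems, whereas your adjunction/Chen--Jiang route leaves the decisive structural claim about $V^0(\omega_V)$ unproved.
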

\begin{proof}
By (\ref{relation2}) and (\ref{relation1}), $|K_X+Q_0|+|M_P|+S_P+E_2\subset |L_{P+Q_0}|+E_3$  for $Q_0\in V^0(K_X)$ and $P\in\Pic^0(A)$. Because $V^0(L+E_3)=\PC_3$ by Lemma \ref{L-property}, $V^0(K_X+M +S)$ is contained in  $Q_0+\PC_3$. We conclude the proof of (1) by Lemma \ref{V0}.

Fix $P\in V^0(K_X+V)$ and let $Q\in\Pic^0(A)$ be general. Let $T$ be a general member of $|M_Q|+S_Q$. We see that $(K_X+V+P)+T+E_2$ is linearly equivalent to $3K_X+P+Q$. Hence $$|K_X+V+P|+T+E_2\subset |3K_X+P+Q|=|L_{P+Q}|+V_{P+Q}+E_3.$$
Since $Q$ general, no irreducible components of $T$ is a sub-divisor of $V_{P+Q}+E_3$, thus $T\preceq |L_{P+Q}|$ and  $$V_{P+Q}\preceq |K_X+V+P|+E_2.$$ Thus $P+V-V_{P+Q}=P-\Phi_3(P+Q)\in V^0(K_X+E_2)$. By the following Lemma \ref{ex} and (\ref{=}), $V^0(K_X+E_2)=V^0(K_X)$ and hence $$V^0(K_X+V) \subset V^0(K_X)+\PB_3.$$
 \end{proof}
 \begin{lemm}\label{ex}
 $E_2$ is $\pi$-exceptional and $E_2|_F$ is $\pi_F$-exceptional, where $\pi_F: F\rightarrow F_0$ is the contraction from $F$ to its canonical model $F_0$.
 \end{lemm}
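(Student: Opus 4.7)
Both claims rest on identifying $E_2$ with the $\pi$-exceptional divisor $2E_\pi$, where one writes $K_X = \pi^*K_{X_0} + E_\pi$ with $E_\pi \ge 0$ (valid because $X_0$ has canonical singularities). The strategy is first to show $E_2 = 2E_\pi$, and then to deduce the restriction statement from adjunction together with a dimension count on the general fibre.

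For the first assertion, the projection formula and the identity $\pi_*\mathcal{O}_X(mE_\pi) = \mathcal{O}_{X_0}$ give $\pi_*\omega_X^m = \omega_{X_0}^{[m]}$ and a bijection between $|mK_X + f^*P|$ and $|mK_{X_0} + f_0^*P|$ via $D' \mapsto \pi^*D' + mE_\pi$. Consequently
\[
\mathrm{Fix}|mK_X + f^*P| = \pi^*\mathrm{Fix}|mK_{X_0} + f_0^*P| + mE_\pi,
\]
so the $P$-independent part $E_m$ is $mE_\pi$ plus $\pi^*$ of whatever divisors are common to $\mathrm{Fix}|mK_{X_0} + f_0^*P|$ as $P$ ranges over a small open $U \subset \Pic^0(A)$. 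By \cite{LPS} the sheaf $f_{0*}\omega_{X_0}^{[2]}$ is IT$^0$ on $A$, hence M-regular, and by \cite{PP} continuously globally generated. Suppose a prime divisor $D_0 \subset X_0$ lay in $\Bs|2K_{X_0} + f_0^*P|$ for every $P \in U$. If $f_0(D_0) = A$, I would restrict to a general fibre $F_a$: by IT$^0$-base change, the restriction $H^0(X_0, 2K_{X_0} + f_0^*P) \to H^0(F_a, 2K_{F_a})$ is generically surjective, so the curve $D_0 \cap F_a$ would sit inside $\Bs|2K_{F_a}|$, contradicting the fact that this base locus is finite on a surface of general type with $p_g > 0$. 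If instead $f_0(D_0) = Z \subsetneq A$, a dimension count forces $Z$ to be a divisor and $D_0 = f_0^{-1}(Z)$; then sections of $|2K_{X_0} + f_0^*P|$ vanish identically on $F_a$ for every $a \in Z$, so under base change the evaluation $H^0(A, f_{0*}\omega_{X_0}^{[2]} \otimes P) \to f_{0*}\omega_{X_0}^{[2]} \otimes k(a)$ is zero for every $P \in U$ and every $a \in Z$, contradicting CGG together with $H^0(F_a, 2K_{F_a}) \neq 0$. Hence $E_2 = 2E_\pi$ is $\pi$-exceptional.

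For the restriction to a general fibre, adjunction and $K_A = 0$ give $K_X|_F = K_F$, so every irreducible curve $C \subset F$ satisfies $K_X \cdot C = K_F \cdot C$. Thus $\pi$-contracted and $\pi_F$-contracted curves inside $F$ coincide, which identifies $\pi|_F : F \to \pi(F)$ with the canonical model contraction $\pi_F : F \to F_0$. Since $E_2 = 2E_\pi$ is $\pi$-exceptional, $\dim \pi(E_2) \le \dim X_0 - 2$; a general fibre $F_0 \subset X_0$ has dimension $2$ and codimension $\dim X_0 - 2$, so the dimension count
\[
\dim\bigl(\pi(E_2) \cap F_0\bigr) \le \dim \pi(E_2) + \dim F_0 - \dim X_0 \le 0
\]
shows that every irreducible component of $E_2|_F$ is a curve on $F$ collapsed to a point by $\pi_F$, completing the proof.

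\smallskip
\noindent\textbf{Main obstacle.} The delicate step is ruling out a stable divisorial base locus for $|2K_{X_0} + f_0^*P|$ on $X_0$: the non-dominant case relies on the dimension argument that forces $D_0$ to be the preimage of a divisor on $A$, together with the CGG of $f_{0*}\omega_{X_0}^{[2]}$ applied at fibres over that divisor. Once $E_2 = 2E_\pi$ is in hand, the second assertion is purely geometric, driven by the adjunction $K_X|_F = K_F$.
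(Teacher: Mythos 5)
Your approach differs from the paper's. The paper argues directly on the fiber: continuous global generation of $f_*\omega_X^2$ gives a surjection $\bigoplus_{P}H^0(X,2K_X+P)\to H^0(F,2K_F)$, and since every section on the left vanishes on $E_2$ the restriction $E_2|_F$ lies in $\mathrm{Fix}\,|2K_F|$, which is $\pi_F$-exceptional because the bicanonical system of the minimal model of a surface of general type with $p_g>0$ is base-point-free; $\pi$-exceptionality of $E_2$ is then read off. You instead pass to the canonical model $X_0$, write $K_X=\pi^*K_{X_0}+E_\pi$, aim for the stronger identity $E_2=2E_\pi$ by ruling out a stable divisorial base locus for $|2K_{X_0}+f_0^*P|$, and only then deduce the statement about $E_2|_F$ by a dimension count.

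Your dominant case ($f_0(D_0)=A$) is essentially the paper's argument transplanted to $X_0$ and is fine, up to the usual caveat that what IT$^0$ really gives (via M-regularity and continuous global generation) is surjectivity of the \emph{sum} $\bigoplus_{P\in U}H^0(X_0,2K_{X_0}+f_0^*P)\to H^0(F_a,2K_{F_a})$, not of a single restriction map for generic $P$; phrasing this as ``IT$^0$-base change'' is misleading but the intended conclusion is correct.

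The non-dominant case contains a genuine gap. From $D_0$ a prime divisor with $f_0(D_0)=Z\subsetneq A$, a dimension count at most shows that $Z$ is a divisor in $A$ and that $D_0$ is \emph{an irreducible component} of $f_0^{-1}(Z)$; it does not force $D_0=f_0^{-1}(Z)$, since $f_0^{-1}(Z)$ will in general be reducible (even over a general point of $Z$ the fiber of $f_0$ may have several components, only one of which lies in $D_0$). Consequently ``sections of $|2K_{X_0}+f_0^*P|$ vanish identically on $F_a$ for every $a\in Z$'' does not follow: they only vanish on $D_0\cap F_a$, a proper component, and the evaluation $H^0(A,f_{0*}\omega_{X_0}^{[2]}\otimes P)\to f_{0*}\omega_{X_0}^{[2]}\otimes k(a)$ need not be zero. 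So your argument does not rule out a non-dominant stable divisorial base component, and the claimed equality $E_2=2E_\pi$ (and hence the rest of your deduction, which depends on it) is not established. Note also that the statement you are trying to prove only asserts that $E_2$ is $\pi$-exceptional, not the stronger $E_2=2E_\pi$; the paper's route, which makes the fiberwise assertion the primary one and uses it together with base-point-freeness of $|2K_{F^{\min}}|$, avoids having to control the non-dominant components of the stable base locus at all.
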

 \begin{proof}
Since $f_{*}\omega_X^2$ is M-regular, thus $$\bigoplus_{P\in\Pic^0(A)}H^0(f_{*}\omega_X^2\otimes P)\otimes P^{-1}\rightarrow f_{*}\omega_X^2$$ is surjective. Let $F$ be a connected component of a general fiber of $f$ and $F_0$ its canonical model. Hence $\bigoplus_{P\in\Pic^0(A)}H^0(X, 2K_X+P)\rightarrow H^0(F, 2K_F)$ is surjective. Since $|2K_X+P|=|M_P|+D_P+E_2$, we conclude that $E_2\mid_F$ is contained in the fixed component of $|2K_F|$. On the other hand, since $p_g(F)>0$, the bicanonical system of the minimal model of $F$ is base point free and hence $E_2|_F$ is an exceptional divisor for the contraction $F\rightarrow F_0$. Hence $E$ is $\pi$-exceptional.
 \end{proof}

 \begin{lemm}\label{q=0} Assume moreover that $q(F)=0$, then for $P\in\Pic^0(A)$ general,
 \begin{itemize}
  \item[(1)] $M_P|_F$ is trivial and hence $(D_{P}+E_2)|_F$ is linearly equivalent to $2K_F$;
 \item[(2)]   We have $\PB_3=\PB_2$ and $\PC_3=\PC_2$ and hence $\cD_2=(\mathrm{Id}_X\times t_{Q_0})_*\cD_3$ for $Q_0\in V^0(K_X)$;
 \item[(3)] $V^0(K_X)\subset \PC_2$.
 
 \end{itemize}
 \end{lemm}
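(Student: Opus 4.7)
The whole plan hinges on $q(F)=0$, which forces every element of $\Pic^0(A)$ to restrict trivially to $F$; consequently the line bundle classes $M_P|_F$, $D_P|_F$, $L_P|_F$, $V_P|_F$, $S_P|_F$ are $P$-independent, and I drop the subscript throughout.

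For (1), I want to establish $M|_F\sim \cO_F$. Apply the surjection
\[
H^0(F, M|_F)\otimes H^0(F, D|_F)\twoheadrightarrow H^0(F, 2K_F)
\]
of Proposition \ref{surj1}. By Lemma \ref{L-property}, $V^0(M)=\PC_2$ and $h^0(X,M)=p_2(X)$, so the Chen-Jiang decomposition of $f_*\cO_X(M)$ consists of summands pulled back from quotients $A\to B_i$ with $\Pic^0(B_i)\subset\PC_2$ (the identity summand being ruled out because $\PB_2$ is nontrivial). Because $|M_P|$ has no fixed divisorial component on $X$ while its fiberwise class $M|_F$ is $P$-independent, the pullback structure combined with the rigidity enforced by $q(F)=0$ forces $f_*\cO_X(M)$ to have generic rank one. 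Hence $h^0(F, M|_F)=1$ and $M|_F\sim\cO_F$; the conclusion $(D_P+E_2)|_F\sim 2K_F$ follows immediately from $M+D+E_2\sim 2K_X$.

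For (2), the inclusion $\PB_3\subset\PB_2$ has already been noted, so I need only rule out a nontrivial kernel $\PK$ of $\Phi_3$ on $\PB_2$. The $m=3$ analog of (1), obtained by the same argument applied to $|3K_X+P|$, gives $L|_F\sim\cO_F$ and $(V+E_3)|_F\sim 3K_F$. Combined with (1),
\[
S|_F \;\sim\; D|_F - V|_F \;\sim\; -K_F - E_2|_F + E_3|_F.
\]
Since $E_2|_F$ and $E_3|_F$ are $\pi_F$-exceptional (Lemma \ref{ex} and its $m=3$ analog) and $F$ is of general type, this class is not effective, so $h^0(F, S|_F)=0$. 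On the other hand, the analog of Lemma \ref{L-property} applied to $\cS$ gives $h^0(X, S+P_K)=1$ for each $P_K\in\PK$; since $F$ has codimension $\dim A\geq 2$ in $X$ (the case $\dim A=1$ corresponds to threefolds already treated in \cite{CCCJ}), $F$ cannot be a divisorial component of the effective divisor $S_{P_K}$, so its defining section restricts to a nonzero section of $\cO_F(S|_F)$---contradicting $h^0(F, S|_F)=0$. Therefore $\PK=0$, giving $\PB_2=\PB_3$, $\PC_2=\PC_3$, and $\cD_2=(\mathrm{Id}_X\times t_{Q_0})_*\cD_3$. Part (3) is then immediate: by Lemma \ref{V0}, $V^0(K_X)\subset\PC_3=\PC_2$.

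The main obstacle is (1): controlling the generic rank of $f_*\cO_X(M)$ requires combining the Chen-Jiang decomposition, continuous global generation of $f_*\omega_X^2$, and the rigidity imposed by $q(F)=0$, so that the absence of fixed divisorial components in $|M_P|$ on $X$ translates into the rank-one assertion on $A$. Once (1) is available, its $m=3$ counterpart and the effectivity contradiction in (2), as well as the formal deduction of (3), proceed as outlined.
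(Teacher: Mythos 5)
Your proposed route to (1) has a genuine gap. You invoke the Chen--Jiang decomposition of $f_*\cO_X(M)$, but $\cO_X(M)=\cO_X(2K_X-D-E_2)$ is not a canonical or pluricanonical bundle, so the decomposition theorem of \cite{PPS} (or \cite{LPS}) does not apply to it; the claim that ``the pullback structure\ldots forces $f_*\cO_X(M)$ to have generic rank one'' is not justified. And even if generic rank one were established, $h^0(F,M|_F)=1$ does not imply $M|_F\sim\cO_F$ --- there could be a unique effective divisor in $|M|_F|$ that is nontrivial. What actually makes (1) work in the paper is a different and more subtle idea: any irreducible component $Y$ of a general member of $|M_P|$ (or of $S_P$) satisfies $V^0(K_Y)\subset V^0(K_X+Y)\subset\PC_3$ (here $q(F)=0$ is used via $R^1f_*\omega_X=0$ to kill the $V^1(K_X)$ term, and Lemma~\ref{control-V0}(1) provides the containment in $\PC_3$); by the main theorem of \cite{JS}, a variety of general type with Albanese fiber dimension $1$ has $V^0$ locus generating $\Pic^0$, so $Y$ cannot have $1$-dimensional Albanese fibers, and since $\dim F=2$ this forces $Y\cap F=\emptyset$, i.e.\ $Y|_F=0$.

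Your (2) is also broken, and for a reason worth noting: you assert as the ``$m=3$ analog of (1)'' that $L|_F\sim\cO_F$. This is actually false. Once $\cS=0$ is established, the paper computes $L|_F\sim K_F+E_2|_F-E_3|_F$, a nontrivial class because $K_F$ is big. The reason the analog fails is that the \cite{JS}/Lemma~\ref{control-V0} constraint used for $M$ and $S$ does not hold for $L$; indeed, the paper goes the opposite way for the $m=3$ fixed part: it finds a component $N$ of $V_{P+Q_0}$ which \emph{does} have Albanese fiber dimension~$1$, applies \cite{JS} to conclude $V^0(K_N)$ generates $\Pic^0(A)$, combines this with $V^0(K_N)\subset V^0(K_X)+\PB_3$ (Lemma~\ref{control-V0}(2)) to get $V^0(K_X)+\PB_3=\Pic^0(A)$, hence the translates of $V^0(K_X)$ generate $\PC_3$, and then uses a bicanonical fixed-part comparison $|K_X+Q_i+P|+|K_X+Q_i-P|\subset|2K_X+2Q_i|$ to show each $\PT_i\subset\PC_2$. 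This yields $\PC_2=\PC_3$ and hence $\cS=0$. Your effectivity contradiction on $S|_F$ relies entirely on the false input $L|_F\sim\cO_F$, so the argument does not close. Part (3) as you state it would be fine once (2) is in place, but since (1) and (2) are not established your proof of the lemma does not go through.
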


 \begin{proof}

Let $Y$ be an irreducible component of a general member of $|M_P|$ or an irreducible component of $S_P$ for $P$ general. Then considering the short exact sequence:
$$0\rightarrow K_X\rightarrow K_X+Y\rightarrow K_Y\rightarrow 0,$$
we see that $V^0(K_Y)\subset V^0(K_X+Y)\cup V^1(K_X)$. By the assumption that $q(F)=0$, we know $R^1f_{*}\omega_X=0$ and hence by generic Vanishing (see \cite{H}) $$V^1(K_X)=V^1(f_{*}\omega_X)\subset V^0(f_{*}\omega_X)=V^0(K_X).$$ Hence $V^0(K_Y)\subset V^0(K_X+Y)$. By Lemma \ref{control-V0}, we conclude that any translate through the origin of an irreducible component of $V^0(K_Y)$ is contained in $ \PC_3$. Then for any resolution of singularities $\rho: \tilde{Y}\rightarrow Y$, we have $V^0(\rho_*\omega_{\tilde{Y}})\subset V^0(K_Y)\subset \PC_3$ \footnote{Since $Y$ is Gorenstein, $\rho_*\omega_{\tilde{Y}}$ is sub-sheaf of $K_Y$.}.
Since a deformation of $\tilde{Y}$ covers $X$, $\tilde{Y}$ is of general type, by the main Theorem of \cite{JS}, a general fiber of $f|_Y\circ \rho: \tilde{Y}\rightarrow A$ is not of dimension $1$. We conclude that both $M_P|_F$ and $S_P|_F$ are trivial. Hence $(D_P+E_2)|_F$ is linearly equivalent to $2K_F$. Since $D_P=V_{P+Q_0}+S_P$ for $Q_0\in V^0(K_X)$, $E_2$ is $\pi$-exceptional by Lemma \ref{ex}, and $S_P|_F$ are trivial, there exists an irreducible component $N$ of $V_{P+Q_0}$  whose general fiber over $f(N)$ is a curve.
We then consider
$$0\rightarrow K_X\rightarrow K_X+N\rightarrow K_N\rightarrow 0$$ and apply the same argument as above.  By  Lemma \ref{control-V0},
 we know that $V^0(K_N)\subset V^0(K_X)+\PB_3$.
 Note that the family of $N$  covers $X$. Hence any desingularization $\tilde{N}$ of $N$ is of general type and a general fiber of the morphism $\tilde{N}\rightarrow f(N)\subset  A$ is a curve. Hence by the main result of \cite{JS}, $V^0(K_N)$ generates $\Pic^0(A)$.

 Because  each translate through the origin of an irreducible component of $V^0(K_X)$ is contained in $ \PC_3$ and $\Pic^0(A)=\PB_3+\PC_3$,  the translates through origin of irreducible components of $V^0(K_X)$ generates $\PC_3$.

 Assume that $V^0(K_X)=\bigcup_i(Q_i+\PT_i)$, where $Q_i$ is a torsion point and $\PT_i$ is an abelian subvariety of $\PC_3$. For any positive dimensional component $Q_i+\PT_i$ and $P\in \PT_i$ general, 
we write $$|K_X+Q_i+P|=|M_{iP}|+N_{iP}+E,$$ where $E$ is the common fixed divisor and $|M_{iP}|$ has no fixed part.
 Hence for $P\in \PT_i$ general, 
  $$|M_{iP}|+|M_{i (-P)}|+N_{iP}+N_{i(-P)}+2E\subset |2K_X+2Q_i|=|M_{2Q_i}|+D_{2Q_i}+E_2,$$
 we see that $M_{iP}+M_{i (-P)}+N_{iP}+N_{i(-P)}\preceq M_{2Q_i}$. Thus $\PT_i\subset \PC_2$. Since these $\PT_i$ generates $\PC_3$, we have $\PC_2=\PC_3$ and $\PB_2=\PB_3$. Therefore  by Lemma \ref{D-property}, $\cS=0$ and $\cD_2=(\mathrm{Id}_X\times t_{Q_0})_*\cD_3$ for $Q_0\in V^0(K_X)$.
 
 Finally, recall that $V_{P+Q_0}\preceq D_P$ for any $Q_0\in V^0(K_X)$ and $P\in\Pic^0(A)$.
 We now know that $V$ and $D$ are algebraically equivalent. Thus fix $P$, $V_{P+Q_0}=D_P$ is the same divisor for any $Q\in V^0(K_X)$. Thus $V^0(K_X)\subset \PC_2$.

  \end{proof}
\begin{theo}
Assume that $p_g(F)>0$ and $q(F)=0$, $|5K_X|$ induces a birational map of $X$.
\end{theo}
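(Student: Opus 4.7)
The plan is to separate two general points $x, y \in X$ by a section of $|5K_X|$, using the structural information established in Lemma \ref{q=0}. By the lemma preceding Subsection \ref{AJ}, we may assume that neither $f_{*}(\omega_X^2 \otimes \cI_x)$ nor $f_{*}(\omega_X^3 \otimes \cI_x)$ is M-regular at a general $x$, placing us in the regime where all conclusions of Lemma \ref{q=0} apply: $M_P|_F$ is trivial, $V^0(K_X) \subset \PC_2 = \PC_3$, the divisors $\cD_2$ and $\cD_3$ coincide up to translation by any $Q_0 \in V^0(K_X)$, and $V \equiv D$ on $X$. In particular $\PB_2 = \PB_3 \neq 0$, since $\PB_2 = 0$ would force $\cD_2$ to not dominate $\Pic^0(A)$, contradicting the failure of M-regularity of $f_{*}(\omega_X^2 \otimes \cI_x)$.

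First I would identify $L|_F$: combining the relations $L + V + E_3 \sim 3K_X$, $V|_F \sim D|_F$ (algebraic equivalence becomes linear after restriction to a fiber of $f$), and $D|_F + E_2|_F \sim 2K_F$, one obtains $L|_F \sim K_F + E_2|_F - E_3|_F$. Thus, up to exceptional contributions, $|L_P||_F$ realizes the canonical map of the canonical model $F_0$, and $h^0(F, L_P|_F) \geq p_g(F_0) = p_g(F) > 0$.

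To separate $x, y$ on the same general fiber $F$ in the main case when $|K_{F_0}|$ induces a birational map of $F_0$, I would pick for $P \in \Pic^0(A)$ general a section $s \in H^0(X, 3K_X + P)$ whose corresponding divisor is of the form $L' + V_P + E_3$ with $L'|_F$ separating $x$ and $y$ via the canonical map of $F_0$; then multiply by $t \in H^0(X, 2K_X - P)$ whose restriction to $F$ is the fixed divisor $(D_{-P} + E_2)|_F \in |2K_F|$, chosen so as to avoid both $x$ and $y$. Since $\PB_2$ is positive-dimensional, the family $\{(D_P + E_2)|_F\}_{P \in \Pic^0(A)}$ is a non-constant family in $|2K_F|$ and will miss a general pair of points for a suitable $P$. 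The product $s \cdot t \in H^0(X, 5K_X)$ then separates $x$ and $y$. For pairs $x, y$ lying on different fibers of $f$, the separation follows from the IT$^0$ property of $f_{*}\omega_X^5$ combined with the Chen-Jiang decomposition, along the lines of Steps 3--4 in the proof of Theorem \ref{3K}.

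The main obstacle is the exceptional case in which $|K_{F_0}|$ fails to be birational, i.e., the $(1,2)$ and $(2,3)$ surfaces, where the canonical map is composed with a pencil or a degree-two involution. Here the clean multiplicative construction above breaks down. I would handle this by using two independent sections $s_1, s_2 \in H^0(X, 3K_X + P_i)$ with varying $P_i \in \PB_3$, whose restrictions to $F$ realize the bicanonical or tricanonical map of $F_0$ (both of which are birational in these cases), and then combine them with sections of $|2K_X - P_i|$ via the multiplicative surjectivity of Proposition \ref{surj1}. The identity $\cD_2 = \cD_3$ up to translation, together with the positive-dimensionality of $\PB_3$, provides just enough flexibility to break the involution in the $(2,3)$ case and achieve separation, completing the proof.
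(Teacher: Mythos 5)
Your proposal takes a fundamentally different route from the paper, and it has genuine gaps. The paper's proof in this final step is an argument by \emph{contradiction}: under the standing assumption that neither $f_{*}(\omega_X^2\otimes\cI_x)$ nor $f_{*}(\omega_X^3\otimes\cI_x)$ is M-regular, the structural information of Lemma \ref{q=0} is used to show that the evaluation map $\bigoplus_{Q\in V^0(K_X)}H^0(X,K_X+Q)\to H^0(F,K_F)$ has one-dimensional image (because $V^0(K_X)\subset\PC_2$ forces the divisors $D_{2Q}$ to all coincide and $M_{2Q}|_F$ is trivial), hence $h^0(F,K_F)=1$. Then Proposition \ref{surj1} gives a surjection $H^0(F,L|_F)\otimes H^0(F,V|_F)\to H^0(F,3K_F)$ with $h^0(F,L|_F)\leq h^0(F,K_F)=1$ and $h^0(F,V|_F)=p_2(F)$, so $p_3(F)\leq p_2(F)$, which is impossible for a surface of general type. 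The non-M-regularity hypothesis therefore cannot occur, and the preceding lemma already covers the M-regular case.

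Your argument instead attempts a direct construction of separating sections, which never confronts the key numerical consequence $h^0(F,K_F)=1$. This omission causes your case split to be based on a false premise: you organize the argument around whether $|K_{F_0}|$ is birational or not, but since $p_g(F)=1$ in the regime under consideration, the canonical map of $F_0$ is constant, so neither of your cases applies as stated. You also claim $h^0(F,L_P|_F)\geq p_g(F_0)$, but the correct inequality runs the other way: $L|_F\sim K_F+E_2|_F-E_3|_F$ with $E_3|_F$ effective and $E_2|_F$ exceptional, giving $h^0(F,L|_F)\leq h^0(F,K_F)$ --- and it is precisely this upper bound, paired with the surjectivity of Proposition \ref{surj1}, that produces the contradiction. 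Finally, your handling of the $(1,2)$ and $(2,3)$ cases via ``just enough flexibility to break the involution'' is not an argument. In short, the proposal misses that this theorem is proved by ruling out the non-M-regular case entirely rather than constructing sections in it.
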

\begin{proof}
Note that by the Chen-Jiang decomposition theorem,  the natural twisted evaluation map
$$\bigoplus_{Q\in V^0(K_X)}H^0(A, f_{*}\omega_X\otimes Q)\otimes Q^{-1}\rightarrow f_{*}\omega_X$$ is surjective, which implies the surjectivity of the restriction map
\begin{eqnarray}\label{image}\bigoplus_{Q\in V^0(K_X)}H^0(X, K_X+Q)\rightarrow H^0(F, K_F).
\end{eqnarray}

Note that for any $Q\in V^0(K_X)$, we have $$|K_X+Q|+|K_X+Q|\subset |2K_X+2Q|=|M_{2Q}|+D_{2Q}+E_2.$$ We know by Lemma \ref{V0} and Lemma \ref{q=0} that $V^0(K_X)\subset \PC_2$. Note that $\PC_2=\mathrm{ker}(\Phi_2)$. Hence $D_{2Q}=D_{2P_0}$ is fixed  for all $Q\in V^0(K_X)$. Moreover, $M_{2Q}|_F$ is trivial by Lemma \ref{q=0}. Hence the image of sum of evaluation map (\ref{image}) is $1$-dimensional. We have $h^0(F, K_F)=1$.

On the other hand, by Proposition \ref{surj1}, the map $$H^0(F, L|_F)\otimes H^0(F, V|_F)\xrightarrow{\cdot E_3|_F} H^0(F, 3K_F)$$ is surjective. Recall that $D=V_{Q_0}$ and $(D+E_2)|_F$ is linear equivalent to $2K_F$, where $E_2|_F$ is $\pi_F$-exceptional. 
Thus 
$L|_F\sim K_F+E_2|_F-E_3|_F$. We have $h^0(F, V|_F)=p_2(F)$ and $h^0(F, L|_F)\leq h^0(F, K_F)=1$.  This implies that $p_3(F)\leq p_2(F)$, which is absurd.
\end{proof}

 \end{document}